\newtheorem{thm}{Theorem}
\newtheorem{lem}[thm]{Lemma}
\theoremstyle{remark}
\newtheorem{rmk}[thm]{Remark}
\numberwithin{equation}{section}
\numberwithin{thm}{section}
\newcommand{\eps}{\varepsilon}
\title{Travelling pulses on three spatial scales in a Klausmeier-type vegetation-autotoxicity model}
\author{Paul Carter\thanks{Department of Mathematics, University of California, Irvine, USA}\and Arjen Doelman\thanks{Mathematical Institute, Leiden University, Leiden, Netherlands} \and Annalisa Iuorio\thanks{Department of Engineering, Parthenope University of Naples, Naples, Italy} \and Frits Veerman\footnotemark[2]}
\begin{document}

\maketitle

\begin{abstract} 
Reaction-diffusion models describing interactions between vegetation and water reveal the emergence of several types of patterns and travelling wave solutions corresponding to structures observed in real-life. Increasing their accuracy by also considering the ecological factor known as \emph{autotoxicity} has lead to more involved models supporting the existence of complex dynamic patterns. In this work, we include an additional carrying capacity for the biomass in a Klausmeier-type vegetation-water-autotoxicity model, which induces the presence of two asymptotically small parameters: $\varepsilon$, representing the usual scale separation in vegetation-water models, and $\delta$, directly linked to autotoxicity. We construct three separate types of homoclinic travelling pulse solutions based on two different scaling regimes involving $\varepsilon$ and $\delta$, with and without a so-called \emph{superslow plateau}. The relative ordering of the small parameters significantly influences the phase space geometry underlying the construction of the pulse solutions. We complement the analysis by numerical continuation of the constructed pulse solutions, and demonstrate their existence (and stability) by direct numerical simulation of the full PDE model.\\

\textbf{Keywords}: pattern formation, travelling pulses, reaction-diffusion-ODE systems, geometric singular perturbation theory, three timescales, autotoxicity.
\end{abstract}

\section{Introduction}

In the last few decades, extensive ecological and mathematical investigations have been devoted to the improvement of our understanding of vegetation dynamics because of its key role as ecological indicator: the well-being -- and the resilience -- of an ecosystem can typically be measured 
by investigating the spatial structures (or \emph{patterns}) vegetation forms in order to enhance its survival probability in a certain environment \cite{Bastiaansen_2018,Bastiaansen_2020,Deblauwe.2008,Kefi_2007,Meron_2015,Rietkerk_2021}. These structures may appear both in the form of stable (``Turing'') and dynamic (travelling) patterns \cite{Consolo_2022, Eigentler_2020, Gandhi_2018}. Transient patterns, in particular, play a crucial role in a wide range of important ecological aspects, including biodiversity \cite{Iuorio_2023}. 
The optimization process behind the emergence of vegetation patterns is mainly based on plant-soil interactions (often known as \emph{feedbacks}), particularly in terms of nutrients and attacks by external factors \cite{Dekker.2010,Dekker.2009,D_Odorico_2007,Manfreda_2013,Rietkerk.2004,Vincenot_2017}. These feedbacks are scale-dependent, as they mainly act as short-range activation and long-range inhibition \cite{Rietkerk_2008}.\\
Nutrients are mainly linked to water, as this is the principal source of growth for vegetation. Therefore, the basic modelling framework used to analytically investigate vegetation patterns is based on partial differential equations (PDEs) of reaction-diffusion type describing the interactions between biomass and water densities \cite{Gowda_2014,von_Hardenberg.2001,Klausmeier.1999}.
In some cases, water is split into soil and surface components in order to more accurately capture plant-water feedback in arid environments \cite{Gilad_2004,Gilad.2007,HilleRisLambers.2001,Rietkerk.2002}. Additional approaches shift from a parabolic to a hyperbolic framework with the aim to include realistic inertial effects in the water diffusion process \cite{Consolo_2017, Consolo_2019}. \\
From a mathematical viewpoint, patterns typically arise from a destabilization of a uniform steady-state via heterogeneous perturbations, also known as Turing instability. Linear and nonlinear stability analyses allow to determine the parameter ranges supporting this type of instability together with many other rich, complex dynamics including Hopf dances and homoclinic snaking \cite{Bastiaansen_2020,Dawes_2015,Doelman_2012,Kealy.2011,Siteur_2014}.
Moreover, the scale-dependent feedbacks between the variables naturally induce multiple scales in the system: this allows for the application of Geometric Singular Perturbation Theory (GSPT), which has proved to be a valuable method to investigate emerging far-from-equilibrium patterns in several models (of Klausmeier-Gray-Scott type) \cite{Doelman_1997,DoelmanVeerman.2015,SewaltDoelman.2017}.
\\
Among the external factors negatively affecting vegetation dynamics (including e.g.~soil-borne pathogens \cite{bever2015maintenance}, animal grazing \cite{Noy_Meir_1975}, and human activity \cite{Gowda_2018}), a prominent role is played by the so-called \emph{autotoxicity}, which is strongly related to the presence of extracellular DNA produced by plants during decomposition \cite{Bonanomi.2011,Mazzoleni.2007, Mazzoleni_etal.2014}. This mechanism has proved to have a strong impact on several relevant ecological phenomena, including species coexistence and biodiversity \cite{Bonanomi_2005,Mazzoleni.2010}. Moreover, coupling reaction-diffusion models describing biomass (and water) dynamics to an additional equation for autotoxicity (in some cases only implicitly depending on space, leading to a so-called reaction-diffusion-ODE system) reveals how this factor influences plants' spatial organisation in a wide variety of cases, including e.g.~clonal rings \cite{Bonanomi.2014,Carteni.2012}, fairy rings \cite{Salvatori_2023}, and vegetation patterns \cite{Marasco_2020,Marasco_etal.2014}. \\
In \cite{Marasco_etal.2014}, in particular, the combination of biomass, water, and autotoxicity dynamics in a reaction-diffusion-ODE model leads to novel, ecologically relevant features in the emerging patterns which are not present in classical biomass-water models of Klausmeier type. For instance, in parameter regimes with low precipitation rate and strong autotoxicity,
spot and stripe patterns -- which are stable in the absence of autotoxicity -- become dynamic and nonsymmetric.
As numerical simulations reveal that these patterns exhibit a multi-scale structure, in \cite{IuorioVeerman.2021} a detailed investigation has been performed in order to rigorously construct travelling wave solutions using GSPT. The starting point hence consisted in the following nondimensionalization of the original reaction-diffusion-ODE system presented in \cite{Marasco_etal.2014}
\begin{subequations}\label{eq:UVS_RDsystem}
\begin{align}
 \frac{\partial U}{\partial t} &= \Delta U + \mathcal{A} \left(1-U\right) - U V^2, \label{eq:UVS_RDsystem_U}\\
 \frac{\partial V}{\partial t} &= \eps^2 \Delta V + U V^2 - \mathcal{B} V - \mathcal{H} V S, \label{eq:UVS_RDsystem_V}\\
 \mathcal{D} \frac{\partial S}{\partial t} &= -S + \mathcal{B} V + \mathcal{H} V S, \label{eq:UVS_RDsystem_S}
\end{align}
\end{subequations}
with positive parameters $\mathcal{A}, \mathcal{B}, \mathcal{D}, \mathcal{H}$ and $0 < \eps \ll 1$ (see \cite{IuorioVeerman.2021} for further details).  An application of the approach from e.g.~\cite{DoelmanVeerman.2015,SewaltDoelman.2017} allowed to establish the existence of stationary and travelling pulse solutions in \eqref{eq:UVS_RDsystem} on a one-dimensional unbounded spatial domain, since it suffices in capturing the main features of the phenomena we aimed to investigate.
%
Through an extensive scaling analysis, it was possible to construct both stationary and travelling pulses; the latter ones, however, did not match those observed in numerical simulations,
as the region in parameter space where these travelling pulses are consistently observed is not contained in the asymptotic scaling of the model parameters assumed in the analysis in \cite{IuorioVeerman.2021}. In other words, unlike the traveling pulses observed in the simulations of \cite{IuorioVeerman.2021}, the ecological relevance of the traveling pulses constructed in that same paper is expected to be limited.\\
The biomass-water-autotoxicity model analysed in \cite{IuorioVeerman.2021,Marasco_etal.2014} is based on the assumption that vegetation growth in drylands is always water limited. Incorporating an explicit logistic growth term in the equation describing vegetation dynamics, however, is an important extension both from the ecological and the mathematical viewpoint: assuming a carrying capacity to describe the total concentration of vegetation that can be supported at a certain location, in fact, increases the model's accuracy while facilitating the analysis \cite{BCD.2019,CarterDoelman.2018}. Furthermore, this assumption can be ecologically interpreted as a growth limitation due to the interaction with other factors and/or species, e.g. fungi \cite{Salvatori_2023}. Finally, as we aim to construct far-from-equilibrium patterns, we explicitly incorporate situations where the model components (in particular $V$) attain relatively high values, which might reasonably be in the range of carrying capacity.
Inspired by \cite{BCD.2019}, we hence here modify the autotoxicity system \eqref{eq:UVS_RDsystem} by introducing a logistic term in the biomass equation as follows
\begin{subequations}\label{eq:UVS_RDsystem_modified}
\begin{align}
 \frac{\partial U}{\partial t} &= \Delta U + \mathcal{A} \left(1-U\right) - U V^2,\\
 \frac{\partial V}{\partial t} &= \eps^2 \Delta V + U V^2(1-k V) - \mathcal{B} V - \mathcal{H} V S,\\
 \mathcal{D} \frac{\partial S}{\partial t} &= -S + \mathcal{B} V + \mathcal{H} V S,
\end{align}
\end{subequations}
where the parameters $\mathcal{A}, \mathcal{B}, \mathcal{D}, \mathcal{H}$ and $0 < \eps \ll 1$ coincide with the ones introduced above and the additional parameter $k$ represents the inverse of the carrying capacity. We assume here that all model parameters are $\mathcal{O}(1)$, with the exception of $\mathcal{D}$ in order to extend the parameter space considered in the analysis based on numerical observations. Our goal is to construct travelling wave solutions that may be expected to have ecological relevance, in the sense that our analytically constructed travelling waves correspond -- also at a quantitative level -- to travelling pulses that can be  observed in numerical simulations of model \eqref{eq:UVS_RDsystem_modified} (and thus can likely be stable).
To this aim,  we introduce the co-moving inertial frame $z = x - \mathcal{C} t$, which brings System \eqref{eq:UVS_RDsystem_modified} into the following form
\begin{subequations}\label{eq:ODEsystem_1_mod}
\begin{align}
 U_{zz} + \mathcal{C} U_z - U V^2 + \mathcal{A}(1-U) &= 0,\\
 \eps^2 V_{zz} + \mathcal{C} V_z + U V^2(1-kV) - \mathcal{B} V - \mathcal{H} V S &= 0,\\
 \mathcal{C}\mathcal{D} S_z - S + \mathcal{B} V+\mathcal{H} V S &= 0.
\end{align}
\end{subequations}
Scaling $\mathcal{C} = \eps c$ allows us to formulate the slow system
\begin{subequations}\label{eq:ODEsystem_modified_rescaled}
 \begin{align}
  u_z &= p,\\
  p_z &= u v^2-\mathcal{A}\left(1-u\right) - \eps c p,\\
  \eps v_z &= q,\\
  \eps q_z &= \mathcal{B} v  - u v^2(1-k v) + \mathcal{H} v s - c q,\\
 \eps s_z &= \delta \left[- \mathcal{B} v- \mathcal{H} v s + s\right], 
 \end{align}
\end{subequations}
where
\begin{equation} \label{eq:delta}
    \delta := \frac{1}{c \mathcal{D}}.
\end{equation}
The associated fast system is obtained by a reformulation of \eqref{eq:ODEsystem_modified_rescaled} in terms of the fast coordinate $\zeta = z/\eps$, corresponding to
\begin{subequations}\label{eq:ODEsystem_modified_fast_rescaled}
 \begin{align}
  u_\zeta &= \eps p,\\
  p_\zeta &= \eps\left[u v^2-\mathcal{A}\left(1-u\right) - \eps c p\right],\\
  v_\zeta &= q,\\
  q_\zeta &= \mathcal{B} v  - u v^2(1-k v) + \mathcal{H} v s - c q,\\
 s_\zeta &= \delta\left[- \mathcal{B} v- \mathcal{H} v s + s\right]. 
 \end{align}
\end{subequations}
The asymptotic magnitude of $\delta$ determines the scaling hierarchy of \eqref{eq:ODEsystem_modified_rescaled}/\eqref{eq:ODEsystem_modified_fast_rescaled}. We identify five possible scaling regimes:
\begin{enumerate}
    \item[(i)] $0 < \eps \ll \delta \ll 1$
    \item[(ii)] $0 < \delta \ll \eps \ll 1$
    \item[(iii)] $0 < \eps \ll 1 \ll \delta$
    \item[(iv)] $0 < \eps \sim \delta \ll 1$
    \item[(v)] $0 < \eps \ll \delta \sim 1$
\end{enumerate}
Regime (iii) has been studied in \cite{IuorioVeerman.2021} in the absence of carrying capacity \eqref{eq:UVS_RDsystem}; the analysis for $k \neq 0$ (cf. \eqref{eq:UVS_RDsystem_modified}) would be similar. We expect that regime (v) poses an analytical challenge due to the fact that the reduced fast system would be fully three-dimensional; see also \cite[Section 4.2.3]{IuorioVeerman.2021}. For regime (iv), we would obtain a fully three-dimensional slow system; however, as $u$ and $s$ are not directly coupled, this does not a priori prohibit an analytical approach. Nevertheless, as case (iv) can be seen as a transition between cases (i) and (ii), we choose to focus on these two distinguished limits, and refer to Remark \ref{rmk:case_iv} for more comments on the transitionary case (iv). An additional motivation to focus on these two cases in particular is linked to the importance of double limits in differential equations, due to their emergence in a wide range of application areas and to the complex and interesting behaviours they induce \cite{Kuehn_2022}. \\
Direct simulations of System \eqref{eq:UVS_RDsystem_modified} in parameter regimes which correspond to cases (i) and (ii) confirm the existence of stable travelling wave solutions. These emerging patterns, which we here rigorously construct using GSPT, exhibit novel features, including the presence of a so-called \emph{superslow plateau}. In case (i), in particular, we encounter both solutions without and with the superslow plateau  (see Figure \ref{fig:1Dprof_i}), whereas in case (ii) all solutions present such plateau (see Figure \ref{fig:1Dprof_ii}).

\begin{figure}[!ht]
    \begin{minipage}{.3\textwidth}
    \centering
    \vspace{.5cm}
    \begin{overpic}[scale=0.65]{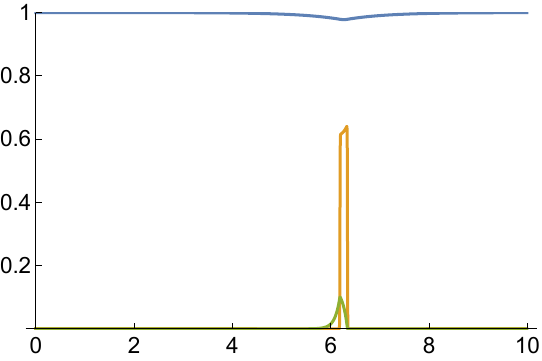}
    \put(50,70){\textbf{(a)}}
    \put(50,-5){$x$}
    \end{overpic}
    \end{minipage}
    \hspace{3cm}
    \begin{minipage}{.3\textwidth}
    \centering
    \vspace{.5cm}
    \begin{overpic}[scale=0.65]{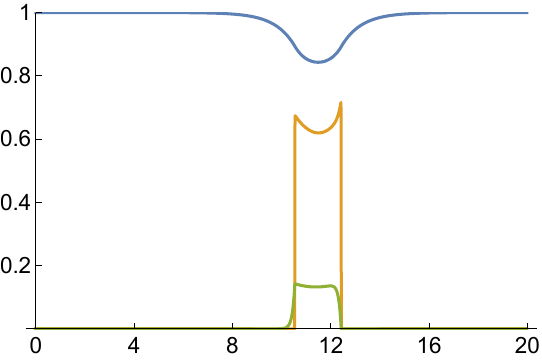}
    \put(50,70){\textbf{(b)}}
    \put(50,-5){$x$}
    \end{overpic}
    \end{minipage}
    \vspace{.5cm}
    \caption{Different patterned solutions of System \eqref{eq:UVS_RDsystem_modified} studied in this paper in case (i). Presented figures show cross-sections of the variables $U(x)$ (blue), $V(x)$ (orange), and $S(x)$ (green) obtained from direct numerical simulations on a 1D domain of length $L$ up to time $T$ (a) without and (b) with superslow plateau. The parameter values used in the simulations are $\mathcal{A} = 1.5$, $\mathcal{B} = 0.2$, $\mathcal{H} = 0.1$, $\eps = 10^{-3}$, $T=5 \cdot 10^4$, and (a) $\mathcal{D} = 3160$, $k = 1.059$, $L=10$; (b) $\mathcal{D} = 2277$, $k = 0.955$, $L = 20$.}
    \label{fig:1Dprof_i}
\end{figure}

\begin{figure}[!ht]
    \centering
    \begin{overpic}[scale=0.65]{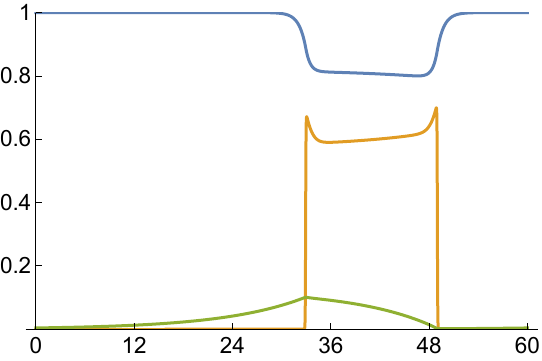}
    \put(50,-5){$x$}
    \end{overpic}
    \vspace{.5cm}
    \caption{Patterned solution of System \eqref{eq:UVS_RDsystem_modified} studied in this paper in case (ii). The presented figure shows cross-sections of the variables $U(x)$ (blue), $V(x)$ (orange), and $S(x)$ (green) obtained from direct numerical simulations on a 1D domain of length $L$ up to time $T$. The parameter values used in the simulations are $\mathcal{A} = 1.5$, $\mathcal{B} = 0.2$, $\mathcal{D} = 37492$, $\mathcal{H} = 0.1$, $\eps = 0.01$, $k = 0.955$, $L = 60$, and $T=5 \cdot 10^4$.}
    \label{fig:1Dprof_ii}
\end{figure}

The goal of our analysis is to construct a homoclinic orbit in system \eqref{eq:ODEsystem_modified_rescaled}, given that both $\eps$ and $\delta$ are asymptotically small. Although system \eqref{eq:ODEsystem_modified_rescaled} has multiple equilibria, we restrict ourselves to orbits that are homoclinic to the (trivial) `bare soil' state $(1,0,0,0,0)$. In both case (i) and (ii), the overall strategy is to construct a singular orbit by concatenating orbits in the three time scales, and subsequently prove that this singular orbit persists for sufficiently small $\eps$ and $\delta$. From a more general mathematical point of view, the presence of two independent small parameters adds an extra `deeper' layer to the geometric approach classically employed for the construction of singular homoclinic (or heteroclinic) patterns (see \cite{BCD.2019,CarterDoelman.2018,DoelmanVeerman.2015} and the references therein) that is expected to be relevant beyond the present specific setting.  The construction of the three time scale singular orbits in case (i) (both without and with superslow plateau) and case (ii) is worked out in analytical and geometrical detail in Section \ref{sec:casei} and \ref{sec:caseii}, respectively. In Section \ref{sec:num} we present some numerical computations confirming the analytical construction using both MATLAB based simulations and continuation with AUTO \cite{Doedel_1981}. Finally, our results are summarized and discussed in Section \ref{sec:concl}.

\section{Case (i): intermediate $s$} \label{sec:casei}
Case (i) results in a three timescale system in which $(u,p)$ evolve on a `superslow' timescale, $(v,q)$ evolve on a fast timescale, and $s$ evolves on an intermediate slow timescale, due to the hierarchy 
\begin{align}\label{eq:scaling_eps_delta_case(i)}
0 < \eps \ll \delta :=\frac{1}{c \mathcal{D}} \ll 1.
\end{align}
In this regime, the system~\eqref{eq:ODEsystem_modified_rescaled} therefore corresponds to the superslow system, while the fast system is that given by~\eqref{eq:ODEsystem_modified_fast_rescaled}. We introduce the intermediate slow coordinate $y=\frac{\delta}{\eps} z$, where $\frac{\eps}{\delta}\ll1$, which results in the intermediate system
\begin{subequations}\label{eq:ODEsystem_case(i)}
 \begin{align}
 \delta u_y &= \eps p,\\
 \delta p_y &= \eps\left[u v^2-\mathcal{A}\left(1-u\right) - \eps c p\right],\\
  \delta v_y &= q,\\
  \delta q_y &= \mathcal{B} v  - u v^2(1-k v) + \mathcal{H} v s - c q,\\
 s_y &= - \mathcal{B} v- \mathcal{H} v s + s. 
 \end{align}
\end{subequations}
The upcoming analysis will show that in scaling regime \eqref{eq:scaling_eps_delta_case(i)}, two distinct pulse solutions can be constructed, each arising from a different geometric configuration of several persistent manifolds in phase space that structure the flow, to wit pulses \emph{with} or \emph{without superslow plateau}.\\
Pulses \emph{without} superslow plateau (see Section \ref{ssec:case(i)_withoutplateau} and Theorem \ref{case(i)_thm_noplateau}) have $u \approx 1$ and $p \approx 0$. The pulse is constructed by directly concatenating the fast flow in $(v,q)$ and the superslow flow in $s$ \eqref{case(i)_homoclinic_concatenation}, see also Figure \ref{fig:case(i)_no_plateau}. The singular construction as outlined in Section \ref{ssec:case(i)_withoutplateau} yields one matching condition for $s$, which determines the amplitude of the $s$-peak.\\
The construction of pulses \emph{with} superslow plateau is more elaborate, see Section \ref{ssec:case(i)_withplateau} and Theorem \ref{case(i)_thm_plateau}. Here, the singular pulse structure is a concatenation of fast $(v,q)$, intermediate $(u,p)$ and superslow $s$ dynamics \eqref{case(i)_homoclinic_concatenation_slow_plateau}, see also Figure \ref{fig:case(i)_plateau}. The singular construction yields two matching conditions, for both $u$ and $s$, determining the height of the $s$-plateau and the width of the $v$-plateau.\\
Although their construction does not a priori prohibit coexistence of both pulse types, detailed inspection of existence conditions reveals that pulses with and without superslow plateau cannot coexist; see Remark \ref{rmk:case(i)_nocoexistence}.

\subsection{Superslow dynamics}\label{sec:case(i)_superslow}
The superslow dynamics are those which occur on the $z$-timescale. We let $\eps \to 0$ in~\eqref{eq:ODEsystem_modified_rescaled}, which results in three algebraic equations defining a two-dimensional critical manifold
\begin{align}\label{eq:case(1)_superslow_C}
\mathcal{C}:=\left\{q=0, \mathcal{B}v-uv^2(1-kv)+\mathcal{H}vs=0, -\mathcal{B} v - \mathcal{H} v s+s = 0 \right\}.
\end{align}
The manifold $\mathcal{C}$ has (up to) four branches $\mathcal{C}_i$, $i=0,1,2,3$, defined by
\begin{align}\label{eq:case(i)_C0_Ci}
\mathcal{C}_0:=\left\{v=q=s=0\right\},\quad \mathcal{C}_i:=\left\{q=0, v=v_i(u), s=\frac{\mathcal{B} v_i(u)}{1-\mathcal{H}v_i(u)}\right\}
\end{align}
where $v_i(u), i=1,2,3$ are the (up to) three real roots of the cubic
\begin{align}\label{eq:case(i)_cubicroots}
\mathcal{H}kuv^3-(k+\mathcal{H})uv^2+uv-\mathcal{B}=0.
\end{align}
We will examine the nature of the roots of this cubic in the following subsections.
The reduced superslow dynamics on the critical manifolds is given by 
\begin{subequations}\label{eq:ODEsystem_case(i)_superslow}
 \begin{align}
  u_z &= p,\\
  p_z &= u v_i(u)^2-\mathcal{A}\left(1-u\right)
 \end{align}
\end{subequations}
where $v_0(u)\equiv0$. The global bare soil equilibrium $(1,0,0,0,0)$ lies on $\mathcal{C}_0$, where it coincides with the saddle-type fixed point $(u,p) = (1,0)$ in~\eqref{eq:ODEsystem_case(i)_superslow}. In order to construct singular homoclinic orbits to this fixed point, we next examine the flow in the fast and intermediate slow timescales. To facilitate the upcoming analysis, we fix $\delta >0$ and take the limit $\eps \downarrow 0$ in \eqref{eq:ODEsystem_case(i)} to obtain the so-called semi-reduced system 
\begin{subequations}\label{eq:ODEsystem_modified_rescaled_semireduced_2}
 \begin{align}
  u &= u_0,\\
  p &= p_0,\\
  \delta v_y &= q,\\
  \delta q_y &= \mathcal{B} v  - u v^2(1-k v) + \mathcal{H} v s - c q,\\
  s_y &= -\mathcal{B} v- \mathcal{H} v s + s.
 \end{align}
\end{subequations}
The semi-reduced system will be studied for $0<\delta \ll 1$ in the upcoming subsections.

\subsection{Slow dynamics}\label{ssec:case(i)_slowdynamics}
The intermediate slow dynamics on the $y$-timescale are uncovered by taking $\delta\to0$ in \eqref{eq:ODEsystem_modified_rescaled_semireduced_2}. The limiting system yields the reduced dynamics
\begin{equation}\label{eq:case(i)_sflow_E}
 s_y = -\mathcal{B} v- \mathcal{H} v s + s, \end{equation}
and admits a one-dimensional critical manifold
\begin{align}
\mathcal{E}:=\left\{u=u_0, p=p_0, q=0, \mathcal{B}v-uv^2(1-kv)+\mathcal{H}vs=0\right\}.
\end{align}
The manifold $\mathcal{E}$ admits three normally hyperbolic branches $\mathcal{E}_0,\mathcal{E}_1^\pm$ defined by
\begin{align}\label{eq:case(i)_E0_Epm}
\mathcal{E}_0:=\{v=q=0\},\quad \mathcal{E}_1^\pm:=\left\{q=0, v=v_\pm(u_0,s)\right\}
\end{align}
where
\begin{align}\label{eq:case(i)_vpm}
v_\pm(u_0,s):=\frac{1}{2k}\left(1\pm \sqrt{1-\frac{4k(\mathcal{B}+\mathcal{H}s)}{u_0}}   \right).
\end{align}
The two branches $\mathcal{E}_1^\pm$ meet along the nonhyperbolic fold defined by 
\begin{align}\label{eq:case(i)_fold_F}
\mathcal{F}:=\left\{u=u_0, p=p_0, q=0, v=\frac{1}{2k}, s=\frac{1}{\mathcal{H}}\left(\frac{u_0}{4k}-\mathcal{B}\right)\right\};
\end{align}
note that branch $\mathcal{E}_1^-$ is normally hyperbolic if and only if $c \neq 0$ (see Section \ref{sec:case(i)_fast}).
The reduced flow on the branch $\mathcal{E}_0$ is given by
\begin{equation}\label{eq:case(i)_sflow_E0}
    s_y = s,
\end{equation}
 while the reduced flow on the branches $\mathcal{E}_1^\pm$ is given by
 \begin{align}\label{eq:ODEsystem_case(i)_slowreduced}
 s_y &= - \mathcal{B} v_\pm(u_0,s)- \mathcal{H} v_\pm(u_0,s) s + s, 
 \end{align}
cf. \eqref{eq:case(i)_sflow_E}. The equilibria of the reduced flow correspond to the intersection of $\mathcal{E}$ with the superslow manifold $\mathcal{C}$ \eqref{eq:case(1)_superslow_C} restricted to the hyperplane $\left\{ u=u_0, p=p_0\right\}$. We now examine the nature of this intersection, and identify several possible subcases.\\

The $s$-dynamics on branch $\mathcal{E}_0$ \eqref{eq:case(i)_sflow_E0} admits a repelling equilibrium at $s=0$. The equilibria of~\eqref{eq:ODEsystem_case(i)_slowreduced} on the other branches $\mathcal{E}_1^\pm$ are given by the roots of the cubic~\eqref{eq:case(i)_cubicroots}; their existence, position and nature depend on $u_0$ and on the values of the parameters $\mathcal{B}, \mathcal{H}$ and $k$. We consider the intersection of the nullcline of \eqref{eq:case(i)_sflow_E} with the projection of $\mathcal{E}_1^\pm$ on the $(v,s)$-plane. 
Both curves can be written as graphs over $v$:
\begin{subequations}\label{eq:ODEsystem_case(i)_slownullclinecurves}
 \begin{align}
 s &= \frac{1}{\mathcal{H}}\left(u_0 v(1-k v)-\mathcal{B}\right), \label{eq:ODEsystem_case(i)_slownullclinecurves_a}\\
 s&= \frac{\mathcal{B} v}{1-\mathcal{H} v}. \label{eq:ODEsystem_case(i)_slownullclinecurves_b}
 \end{align}
\end{subequations}
The parabola~\eqref{eq:ODEsystem_case(i)_slownullclinecurves_a} opens downward, with a maximum at $(v,s)=\left(\frac{1}{2k},\frac{1}{\mathcal{H}}\left(\frac{u_0}{4k}-\mathcal{B}\right)\right)$ corresponding to the fold $\mathcal{F}$ \eqref{eq:case(i)_fold_F}, which lies in the region $v,s>0$ provided $\mathcal{B}<\frac{u_0}{4k}$. The hyperbola~\eqref{eq:ODEsystem_case(i)_slownullclinecurves_b} has a vertical asymptote at $v=\frac{1}{\mathcal{H}}>0$ and horizontal asymptote at $s=-\frac{\mathcal{B}}{\mathcal{H}}<0$. Since the parabola~\eqref{eq:ODEsystem_case(i)_slownullclinecurves_a} opens downward and intersects the $s$-axis at $s=-\frac{\mathcal{B}}{\mathcal{H}}$, the graphs \eqref{eq:ODEsystem_case(i)_slownullclinecurves} always intersect in the region $v>\frac{1}{\mathcal{H}}, s<-\frac{\mathcal{B}}{\mathcal{H}}$, corresponding to a real root of~\eqref{eq:case(i)_cubicroots}. Note that this equilibrium is not relevant for our analysis, as it always occurs in the region $s<0$. There may be, however, up to two additional intersections, or roots of~\eqref{eq:case(i)_cubicroots} which may occur in the region $v,s>0$. These correspond to equilibria of~\eqref{eq:ODEsystem_modified_rescaled_semireduced_2} and are given by $(v,q,s)=(v_i(u_0),0,s_i(u_0))$, $i=1,2$, where
\begin{align}\label{eq:case(i)_si_vi}
    s_i(u_0) = \frac{\mathcal{B}v_i(u_0)}{1-\mathcal{H}v_i(u_0)},
\end{align}
and $v_1(u_0) \leq v_2(u_0)$. This pair of equilibria appears as the nullcline~\eqref{eq:ODEsystem_case(i)_slownullclinecurves_b} tangentially intersects (the projection of) $\mathcal{E}_1^-$, in the region $v<\frac{1}{2k}$. If this pair of equilibria exists, then either both lie on $\mathcal{E}_1^-$, in which case the lower equilibrium $(v_1,0,s_1)$ is repelling and the upper equilibrium $(v_2,0,s_2)$ is attracting (within $\mathcal{E}_1^-$), or $(v_1,0,s_1)$ lies on $\mathcal{E}_1^-$, while $(v_2,0,s_2)$ has crossed through the fold $\mathcal{F}$ and lies on $\mathcal{E}_1^+$; in the latter case, both equilibria are repelling as equilibria of the slow reduced flow on their respective branches $\mathcal{E}_1^\pm$ \eqref{eq:ODEsystem_case(i)_slowreduced}, see also Figure \ref{fig:case(i)_Epm_Snullcline_vsplane}. The equilibria lie on separate branches of $\mathcal{E}_1^\pm$ provided
\begin{align}\label{eq:case(i)_v2_on_E+}
    \mathcal{H}<2k\quad \text{and}\quad\frac{u_0}{4k}-\mathcal{B}>\frac{\mathcal{B}\mathcal{H}}{2k-\mathcal{H}}.
\end{align}

\begin{figure}
    \centering
    \includegraphics[width=0.4\textwidth]{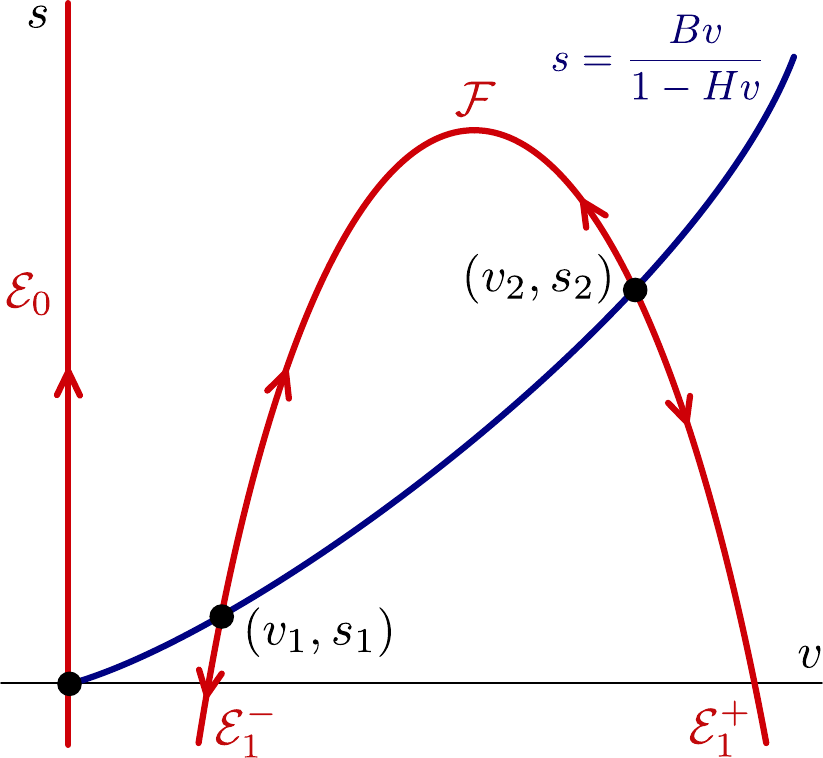}\hspace{0.1\textwidth}
    \includegraphics[width=0.4\textwidth]{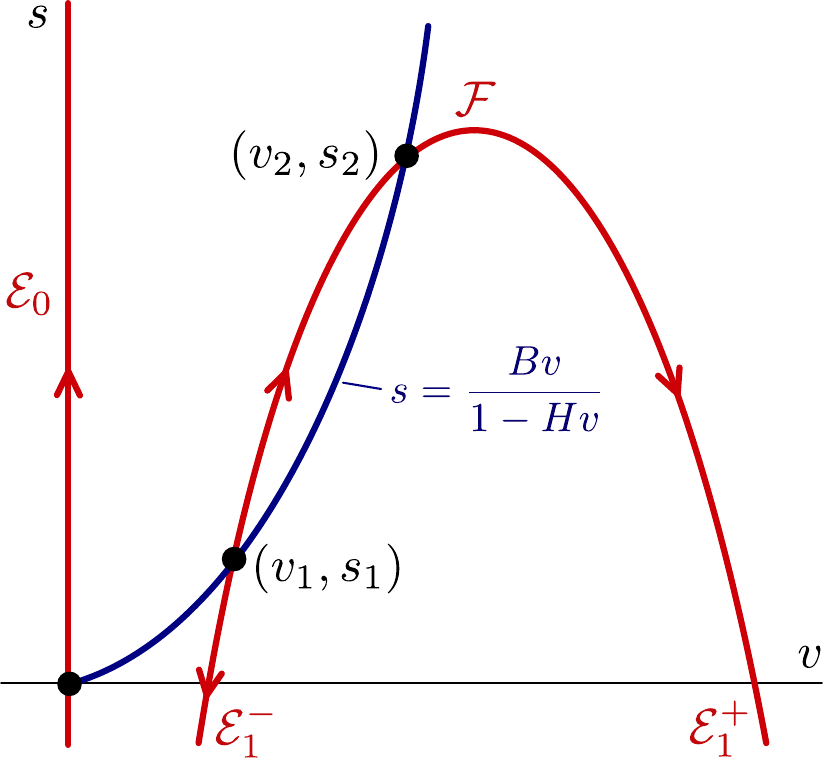}
    \caption{The projection of the equilibria $(v_i,0,s_i)$ on the $(v,s)$-plane in the case $\mathcal{H}<2k$ and $\frac{u_0}{4k}-\mathcal{B}>\frac{\mathcal{B}\mathcal{H}}{2k-\mathcal{H}}$ (left), $\frac{u_0}{4k}-\mathcal{B}<\frac{\mathcal{B}\mathcal{H}}{2k-\mathcal{H}}$ (right). The reduced flow \eqref{eq:ODEsystem_case(i)_slowreduced} on the branches $\mathcal{E}_0$ and $\mathcal{E}_1^\pm$ is indicated in red.}
    \label{fig:case(i)_Epm_Snullcline_vsplane}
\end{figure}

\subsection{Fast dynamics}\label{sec:case(i)_fast}
The fast dynamics take place on the $\zeta$-timescale. In the fast limit $\eps\to0, \delta\to0$ in \eqref{eq:ODEsystem_modified_fast_rescaled}, we obtain the planar layer problem
\begin{subequations}\label{eq:ODEsystem_case(i)_layer}
 \begin{align}
  u &= u_0,\\
  p &= p_0,\\
  v_\zeta &= q,\\
  q_\zeta &= \mathcal{B} v  - u v^2(1-k v) + \mathcal{H} v s - c q,\\
  s &= s_0.
 \end{align}
\end{subequations}
In the region $s_0>\frac{1}{\mathcal{H}}\left(\frac{u_0}{4k}-\mathcal{B}\right)$, this system admits a single saddle-type equilibrium at $(v,q)=(0,0)$, while for $s_0<\frac{1}{\mathcal{H}}\left(\frac{u_0}{4k}-\mathcal{B}\right)$, there are two additional equilibria $(v,q) = (v_\pm(u_0,s_0),0)$ \eqref{eq:case(i)_vpm}, where the equilibrium $(v_+(u_0,s_0),0)$ is also of saddle type. We search for heteroclinic connections between the normally hyperbolic saddle branches $\mathcal{E}_0$ and $\mathcal{E}_1^+$ \eqref{eq:case(i)_E0_Epm} by constructing heteroclinic orbits in~\eqref{eq:ODEsystem_case(i)_layer} between $(0,0)$ and $(v_+(u_0,s_0),0)$; note that for $s_0 >0$ this implies
\begin{equation}\label{eq:case(i)_assumption_u0_heteroclinic}
    u_0 >4 k \mathcal{B}.
\end{equation}
By rewriting
\begin{subequations}\label{eq:ODEsystem_case(i)_layer_re}
 \begin{align}
  v_\zeta &= q,\\
  q_\zeta &= - c q +k u_0 v(v-v_+(u_0,s_0))(v-v_-(u_0,s_0)),
 \end{align}
\end{subequations}
and searching for solutions which can be represented as a graph of $q$ over $v$, in the region $s_0<\frac{1}{\mathcal{H}}\left(\frac{u_0}{4k}-\mathcal{B}\right)$, we find two possible heteroclinic connections (see Figure~\ref{fig:case(i)_fast_heteroclinic})
\begin{subequations}\label{eq:ODEsystem_case(i)_heteroclinics}
 \begin{align}
\phi_\dagger(\zeta;u_0,s_0)&:=\left(v_\dagger, q_\dagger\right)(\zeta), \\
\phi_\diamond(\zeta;u_0,s_0)&:=\left(v_\diamond, q_\diamond\right)(\zeta),
 \end{align}
\end{subequations}
with $v$-profiles
\begin{subequations}\label{eq:ODEsystem_case(i)_heteroclinics_profiles}
 \begin{align}
v_\dagger(\zeta):=\frac{v_+(u_0,s_0)}{2}\left(1+\tanh\left(\frac{v_+(u_0,s_0)\sqrt{k u_0}}{2\sqrt{2}} \zeta\right)\right),\\
v_\diamond(\zeta):=\frac{v_+(u_0,s_0)}{2}\left(1-\tanh\left(\frac{v_+(u_0,s_0)\sqrt{k u_0}}{2\sqrt{2}} \zeta\right)\right),
 \end{align}
\end{subequations}
and corresponding wave speeds
\begin{equation}\label{eq:ODEsystem_case(i)_heteroclinics_wavespeeds}
 c_\dagger(u_0,s_0) :=\sqrt{\frac{k u_0}{2}}\left(2v_-(u_0,s_0)-v_+(u_0,s_0)\right) = \frac{1}{2k}\left(1- \sqrt{1-3\frac{4k(\mathcal{B}+\mathcal{H}s_0)}{u_0}} \right) =: - c_\diamond(u_0,s_0).
\end{equation} 

\begin{figure}
    \centering
    \includegraphics[width=0.9\textwidth]{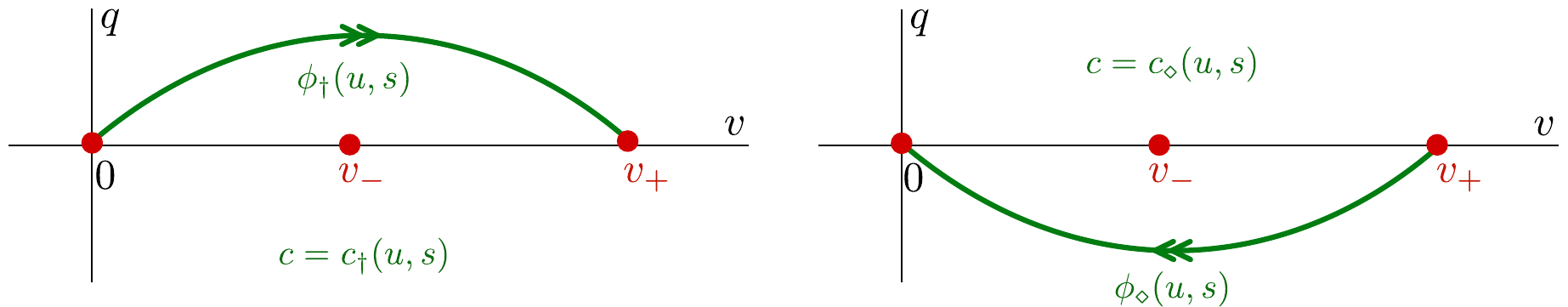}
    \caption{The fast heteroclinic jumps $\phi_\diamond,\phi_\dagger$.}
    \label{fig:case(i)_fast_heteroclinic}
\end{figure}
The heteroclinic $\phi_\dagger(\zeta)$ corresponds to a connection from $\mathcal{E}_0$ to $\mathcal{E}_1^+$, while $\phi_\diamond(\zeta)$ corresponds to a connection from $\mathcal{E}_1^+$ to $\mathcal{E}_0$. In particular, as both $\mathcal{E}_1^+$ and $\mathcal{E}_0$ transversely intersect the subspace $\left\{s=0\right\}$, there exists a heteroclinic connection $\phi_\diamond(\zeta;u_0,0)$ from $\mathcal{E}_1^+$ to $\mathcal{E}_0$ in the subspace $\left\{s=0\right\}$. 
Direct calculation reveals that there exists a unique value $s_0 = s_*<\frac{1}{\mathcal{H}}\left(\frac{u_0}{4k}-\mathcal{B}\right)$, where $s_*$ is a solution to the equation
\begin{equation}\label{eq:caseI_implicit_c}
c_\dagger(u_0,s_*)=c_\diamond(u_0,0),
\end{equation}
for which $\phi_\dagger(\zeta;u_0,s_*)$ is a heteroclinic connection from $\mathcal{E}_0$ to $\mathcal{E}_1^+$, such that $\phi_\diamond(\zeta;u_0,0)$ and $\phi_\dagger(\zeta;u_0,s_*)$ have the same wave speed $c$. This value $s_*$ is positive if
\begin{equation}
u_0 > \frac{9}{2} \mathcal{B} k.
\end{equation}

Given such a heteroclinic connection $\phi_*$ with speed $c_*(u_0,s_0)$ for some $(u_0,s_0)$, where $*=\dagger$ or $\diamond$, we note that this connection persists for any choice of $p$ since the fast subsystem is independent of $p$. However, the following result shows that this heteroclinic connection breaks transversely upon varying $(u,s,c)$ near $(u,s,c_*(u_0,s_0))$.

\begin{lem}\label{lem:case(i)_layer_transversality}
Fix $(u_0,c_0)$ so that~\eqref{eq:ODEsystem_case(i)_layer} admits a heteroclinic connection $\phi_*(\zeta)=(v_*,q_*)(\zeta)$ with speed $c_*(u_0,s_0)$, where $*=\dagger$ or $\diamond$. Then this connection breaks transversely upon varying $(u,s,c)$, as measured by the splitting distance 
\begin{align}\label{eq:heteroclinic_split}
\begin{split}
    D(u,s,c) &= M^u_*(u-u_0)+M^s_*(s-s_0)+M^c_*(c-c_*(u_0,s_0))\\
    &\qquad+\mathcal{O}\left(|u-u_0|^2+ |s-s_0|^2+|c-c_*(u_0,s_0)|^2 \right)
    \end{split}
\end{align}
where the coefficients $M^u_*,M^s_*,M^c_*$ are given by the nonzero Melnikov integrals
\begin{align*}
    M^u_* &= \int_{-\infty}^\infty e^{c_*\zeta}v_*(\zeta)^2\left(1-k v_*(\zeta)\right)v_*'(\zeta)  \mathrm{d}\zeta \neq 0\\
    M^s_*&= -\int_{-\infty}^\infty e^{c_*\zeta}\mathcal{H} v_*(\zeta)v_*'(\zeta)  \mathrm{d}\zeta \neq 0 \\
    M^c_*&= \int_{-\infty}^\infty e^{c_*\zeta}v_*'(\zeta)^2  \mathrm{d}\zeta >0
\end{align*}
\end{lem}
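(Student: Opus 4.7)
The plan is to set up and compute the standard Melnikov integrals that measure the first-order splitting of the stable and unstable manifolds of the saddle endpoints of $\phi_*$ as the parameter vector $\mu:=(u,s,c)$ is varied near $\mu_0:=(u_0,s_0,c_*(u_0,s_0))$. Writing \eqref{eq:ODEsystem_case(i)_layer_re} as $\dot x = X(x;\mu)$ with $x=(v,q)$, normal hyperbolicity of the saddle equilibria on $\mathcal{E}_0$ and $\mathcal{E}_1^+$ guarantees that their one-dimensional stable/unstable manifolds persist smoothly for $\mu$ near $\mu_0$. I would measure the signed distance $D(u,s,c)$ between these persistent manifolds in a fixed transverse section to $\phi_*$; transverse breaking is then equivalent to $\nabla_\mu D(\mu_0)\neq 0$.

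Next, I would derive the Melnikov expansion. The variational equation along $\phi_*$ has trace $-c_*$, so by Liouville's formula the determinant of its fundamental matrix is proportional to $e^{-c_*\zeta}$; this is precisely the origin of the weight $e^{c_*\zeta}$ appearing in the integral formulas. A direct application of the Melnikov theorem for parameter perturbations of planar systems (as in e.g. \cite{DoelmanVeerman.2015,IuorioVeerman.2021}) then yields
\begin{equation*}
D(u,s,c) = -\int_{-\infty}^\infty e^{c_*\zeta}\bigl(\phi_*'(\zeta) \wedge \partial_\mu X(\phi_*(\zeta);\mu_0)\bigr)\cdot(\mu-\mu_0)\,\mathrm{d}\zeta + \mathcal{O}(|\mu-\mu_0|^2),
\end{equation*}
where $(a_1,a_2)\wedge(b_1,b_2):=a_1b_2-a_2b_1$. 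Since $\phi_*'=(v_*',q_*')$ and the parameter derivatives of $X$ are $\partial_u X = (0,-v^2(1-kv))$, $\partial_s X = (0,\mathcal{H} v)$, $\partial_c X = (0,-q)$, substituting along $\phi_*$ and absorbing the overall sign produces exactly the three integral coefficients $M_*^u, M_*^s, M_*^c$ displayed in the statement.

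It then remains to verify convergence and non-vanishing of each $M_*$. Convergence at $\zeta\to\pm\infty$ follows from the exponential decay of $v_*'$ dictated by the stable/unstable eigenvalues at the respective saddle endpoints $(0,0)$ and $(v_+,0)$, which combined with the weight $e^{c_*\zeta}$ yields exponentially decaying integrands on both ends. For non-vanishing, $M_*^c>0$ is immediate since its integrand is non-negative and not identically zero. For $M_*^s$ and $M_*^u$ I would exploit the explicit monotone tanh-profiles \eqref{eq:ODEsystem_case(i)_heteroclinics_profiles}: along both $\phi_\dagger$ and $\phi_\diamond$, $v_*$ is strictly monotone with range contained in $[0,v_+]\subseteq[0,1/k]$ (by the definition \eqref{eq:case(i)_vpm} of $v_+$), so both $v_*v_*'$ and $v_*^2(1-kv_*)v_*'$ are of constant sign along the orbit, and their integrals against the strictly positive weight $e^{c_*\zeta}$ are therefore non-zero.

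The main obstacle is the careful bookkeeping required to derive the precise exponential weight and the overall sign in the Melnikov formula, which rests on identifying the adjoint variational solution and applying Liouville's formula correctly. Once this is in place, the remainder of the argument is essentially mechanical, and the three non-zero coefficients immediately give $\nabla_\mu D(\mu_0)\neq 0$, so that $\{D=0\}$ is locally a smooth codimension-one submanifold of $(u,s,c)$-space — which is the asserted transverse breaking.
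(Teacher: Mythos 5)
Your proposal is correct and follows essentially the same route as the paper: both use planar Melnikov theory, with the paper writing the adjoint equation and its bounded solution $\psi_* = e^{c_*\zeta}(q_*',-v_*')^T$ explicitly while you derive the weight $e^{c_*\zeta}$ via Liouville's formula and state the equivalent wedge-product form of the splitting function. Your additional remarks on convergence and on the sign-definiteness of the integrands (using the strict monotonicity of $v_*$ and the bound $v_+<1/k$) simply make explicit the step the paper condenses into its closing sentence about the signs of $v_*'$.
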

\begin{proof}
The splitting distance~\eqref{eq:heteroclinic_split} within the planar system
\begin{subequations}\label{eq:ODEsystem_case(i)_layer_planar}
 \begin{align}
  v_\zeta &= q,\\
  q_\zeta &= \mathcal{B} v  - u v^2(1-k v) + \mathcal{H} v s - c q,
 \end{align}
 \end{subequations}
 can be computed using Melnikov theory. Linearizing about the heteroclinic connection $\phi_*$, we obtain the variational equation
 \begin{align}\label{eq:ODEsystem_case(i)_layer_var}
\begin{pmatrix}v\\q  \end{pmatrix}_\zeta &= \begin{pmatrix} 0 & 1\\ \mathcal{B}+\mathcal{H} s - u \left(2v_*(\zeta)-3k v_*(\zeta)^2\right) & -c \end{pmatrix}\begin{pmatrix}v\\q  \end{pmatrix}
 \end{align}
 and the corresponding adjoint equation
 \begin{align}\label{eq:ODEsystem_case(i)_layer_adj}
\begin{pmatrix}v\\q  \end{pmatrix}_\zeta &= -\begin{pmatrix} 0 & \mathcal{B}+\mathcal{H} s - u \left(2v_*(\zeta)-3k v_*(\zeta)^2\right)\\ 1 & -c \end{pmatrix}\begin{pmatrix}v\\q  \end{pmatrix}
\end{align}
which admits a unique (up to scalar multiple) bounded solution 
 \begin{align}\label{eq:ODEsystem_case(i)_layer_adj_bounded}
\psi_*(\zeta):=e^{c_*\zeta}\begin{pmatrix}q_*'(\zeta)\\-v_*'(\zeta)  \end{pmatrix}=e^{c_*\zeta}\begin{pmatrix}q_*'(\zeta)\\-q_*(\zeta).  \end{pmatrix}
 \end{align}
 We let $F_0(v,q;u,s,c)$ denote the right hand side of~\eqref{eq:ODEsystem_case(i)_layer_planar} and define the Melnikov integrals
 \begin{align}
     M^\nu_*:=\int_{-\infty}^\infty D_\nu F(v_*(\zeta),q_*(\zeta);u_0,s_0,c_*(u_0,s_0))\cdot \psi_*(\zeta)\mathrm{d}\zeta
 \end{align}
 where $\nu = u,s,c$. We compute
 \begin{align*}
    M^u_* &= \int_{-\infty}^\infty e^{c_*\zeta}v_*(\zeta)^2\left(1-k v_*(\zeta)\right)v_*'(\zeta)  \mathrm{d}\zeta \neq 0\\
    M^s_*&= -\int_{-\infty}^\infty e^{c_*\zeta}\mathcal{H} v_*(\zeta)v_*'(\zeta)  \mathrm{d}\zeta \neq 0 \\
    M^c_*&= \int_{-\infty}^\infty e^{c_*\zeta}v_*'(\zeta)^2  \mathrm{d}\zeta >0
\end{align*}
and obtain the leading order distance function~\eqref{eq:heteroclinic_split}. We note that the Melnikov integrals are all nonzero, but the signs of $M^u_*, M^s_*$ depend on whether $*=\dagger$ or $\diamond$, since this determines the sign of $v_*'(\zeta)$.
\end{proof}

\subsection{Singular profiles}\label{sec:case(i)_profiles}
We now construct singular homoclinic orbits to the fixed point $(u,p,v,q,s)=(1,0,0,0,0)$. We identify two cases in which it is possible to construct such a singular orbit. These two cases are distinguished by whether the associated traveling pulse has a `superslow plateau'.

\subsubsection{Homoclinic orbits without superslow plateau}\label{ssec:case(i)_withoutplateau}
We first consider the simpler case, without a superslow plateau. In this case, we aim to construct a singular pulse contained entirely in the subspace $\{u_0=1, p_0=0\}$. Note that this subspace is not invariant under the full flow \eqref{eq:ODEsystem_modified_rescaled}/\eqref{eq:ODEsystem_modified_fast_rescaled}; hence, the persistent solution based on the upcoming singular construction (see Theorem \ref{case(i)_thm_noplateau}) will have $u \not\equiv 1$.

Consider the fast dynamics~\eqref{eq:ODEsystem_case(i)_layer} in the subspace $\{u_0=1, p_0=0\}$. From the previous section, provided there exists $s=s_*<\frac{1}{\mathcal{H}}\left(\frac{1}{4k}-\mathcal{B}\right)$ such that $c_\dagger(1,s_*)=c_\diamond(1,0)$ (cf. \eqref{eq:caseI_implicit_c}), we can construct a pair of heteroclinic orbits $\phi_\diamond(\zeta;1,0)$ and $\phi_\dagger(\zeta;1,s_*)$, where $\phi_\dagger(\zeta;1,s_*)$ jumps from $\mathcal{E}_0$ to $\mathcal{E}_1^+$ \eqref{eq:case(i)_E0_Epm} in the subspace $\left\{s=s_*\right\}$, and $\phi_\diamond(\zeta;1,0)$ jumps from $\mathcal{E}_1^+$ back to $\mathcal{E}_0$ in the subspace $\left\{s=0\right\}$. The condition $c_\dagger(1,s_*)=c_\diamond(1,0)$ \eqref{eq:caseI_implicit_c} can be satisfied by using the fact that $v_-(1,s)+v_+(1,s)=\frac{1}{k}$ \eqref{eq:case(i)_vpm} and solving
\begin{align}
\sqrt{\frac{k}{2}}\left(\frac{2}{k}-3v_+(1,s_*)\right) = \sqrt{\frac{k}{2}}\left(3v_+(1,0) - \frac{2}{k}\right),
\end{align}
cf. \eqref{eq:ODEsystem_case(i)_heteroclinics_wavespeeds}, which has a solution when $4k\mathcal{B}>\frac{5}{9}$, given by
\begin{align}\label{case(i)_solution_sstar}
s_* = \frac{-1+3\sqrt{1-4k\mathcal{B}}}{9\mathcal{H}k}.
\end{align}
which is positive provided $4k\mathcal{B}<\frac{8}{9}$.

\begin{rmk}
In the region  $4k\mathcal{B}<\frac{5}{9}$, it is still possible to construct a heteroclinic orbit $\phi_\dagger(\zeta;1,s_*)$, departing $\mathcal{E}_0$ in the subspace $s=s_*=\frac{1}{\mathcal{H}}\left(\frac{1}{4k}-\mathcal{B}\right)$. However, this orbit would then land precisely at the nonhyperbolic fold point $\mathcal{F}$ \eqref{eq:case(i)_fold_F}. While it should still be feasible to construct traveling pulses in this regime (see e.g.~\cite{BCD.2019}), we exclude this particular case from consideration here and focus only on the case where $\phi_\dagger(\zeta;1,s_*)$ touches down on a normally hyperbolic portion of $\mathcal{E}_1^+$.
\end{rmk}

In order to complete the construction of a singular homoclinic orbit, it remains to consider the slow $s$-dynamics. The reduced flow on $\mathcal{E}_0$ \eqref{eq:case(i)_sflow_E0} is simply $s_y = s$, so that an orbit from the fixed point $(v,q,s)=(0,0,0)$ can travel upwards along $\mathcal{E}_0$ via the slow flow until reaching $s=s_*$, then following the heteroclinic connection $\phi_\dagger(\zeta;1,s_*)$ to $\mathcal{E}_1^+$. The flow on $\mathcal{E}_1^+$ depends on the configuration of the equilibrium $(v,q,s) = (v_2(1),0,s_2(1))$ \eqref{eq:case(i)_si_vi}, in particular whether it lies on the branch $\mathcal{E}_1^+$, see Figure \ref{fig:case(i)_Epm_Snullcline_vsplane} (left). We recall that this situation occurs provided $\mathcal{H}<2k$ and $    \frac{1}{4k}-\mathcal{B}>\frac{\mathcal{B}\mathcal{H}}{2k-\mathcal{H}}$ \eqref{eq:case(i)_v2_on_E+}.
In this case, there is a possibility that $0< s_2(1) < s_*$, the equilibrium $(v_2(1),0,s_2(1))$ thereby `blocking' any orbit from following the slow flow from $s=s_*$ down to $s=0$. However, if either $s_2(1) > s_*$, or one of the conditions \eqref{eq:case(i)_v2_on_E+} is violated in an open region of parameter space, then the equilibrium $(v_2(1),0,s_2(1))$ will not cause an obstruction, and the slow flow can be followed from $s=s_*$ to $s=0$, after which the heteroclinic orbit $\phi_\diamond(\zeta;1,0)$ can be followed back to the equilibrium $(v,q,s)=(0,0,0)$. We can therefore construct a singular homoclinic orbit of the full system in the subspace $\{u=1, p=0\}$ via the concatenation
\begin{align}\label{case(i)_homoclinic_concatenation}
\mathcal{E}_0[0,s_*] \cup \phi_\dagger(\zeta;1,s_*) \cup \mathcal{E}_1^+[0,s_*] \cup \phi_\diamond(\zeta;1,0)
\end{align}
where $\mathcal{E}_0[0,s_*]$ denotes the intersection $\mathcal{E}_0\cap \{0\leq s\leq s_*\}$ and similarly for $\mathcal{E}_1^+$;  see Figure~\ref{fig:case(i)_no_plateau}.

\begin{figure}
    \centering
    \includegraphics[width=0.45\textwidth]{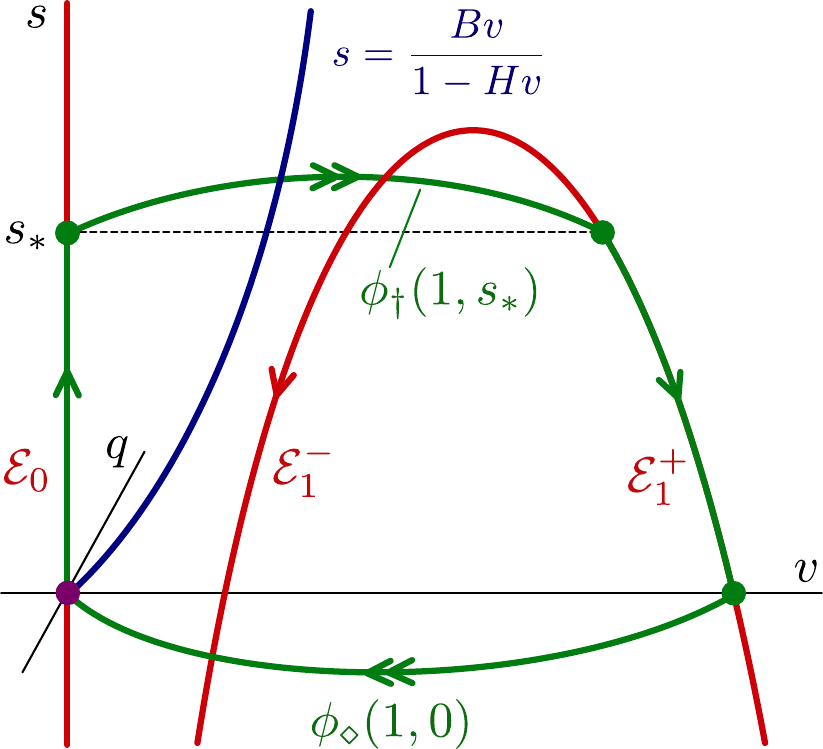}
    \caption{The singular orbit $\mathcal{E}_0[0,s_*] \cup \phi_\dagger(\zeta;1,s_*) \cup \mathcal{E}_1^+[0,s_*] \cup \phi_\diamond(\zeta;1,0)$ in the subspace $(u_0,p_0) = (1,0)$ the case without superslow plateau.}
    \label{fig:case(i)_no_plateau}
\end{figure}

\subsubsection{Homoclinic orbits with superslow plateau}\label{ssec:case(i)_withplateau}
The singular orbits constructed in the previous section were contained entirely within the subspace $\{u=1,\,p=0\}$. We now consider singular homoclinic orbits with nontrivial superslow $(u,p)$ dynamics, which will contain a superslow plateau state in the vegetated $(v >0)$ region of the pulse. It is possible to construct such an orbit in the case where the equilibrium $(v,q,s) = (v_2,0,s_2)$ of~\eqref{eq:ODEsystem_case(i)} for $\eps=0$ has moved onto $\mathcal{E}_1^+$ for some values of $u<1$, which occurs provided
$\mathcal{H}<2k$ and$ \frac{u}{4k}-\mathcal{B}>\frac{\mathcal{B}\mathcal{H}}{2k-\mathcal{H}}$, cf. \eqref{eq:case(i)_v2_on_E+}.
Note that the union of $(v_2,0,s_2)$ taken over all $(u,p)$-values corresponds to the superslow critical manifold $\mathcal{C}_2$ \eqref{eq:case(i)_C0_Ci}; thus here we are interested in the case where $\mathcal{C}_2$ intersects $\mathcal{E}$ on the branch $\mathcal{E}_1^+$ in the region $u<1$. For particular values of $u=u_*<1, p=p_*$, it may be possible to construct slow/fast heteroclinic orbits connecting the equilibria $(v,q,s)=(0,0,0)$ and $(v,q,s) = (v_2(u_*),0,s_2(u_*))$ and vice versa within the subspace $\{u=u_*\}$ for $\delta\ll1$ sufficiently small. These two orbits can then be glued along a superslow plateau trajectory which is contained in $\mathcal{C}_2$.

To see this, we recall from~\S\ref{sec:case(i)_superslow} that the reduced superslow dynamics are restricted to the two-dimensional critical manifold $\mathcal{C}$ \eqref{eq:case(1)_superslow_C}, and that in particular the bare soil equilibrium $(u,p,v,q,s) = (1,0,0,0,0)$ of the full system corresponds to the saddle fixed point $(u,p) = (1,0)$ on $\mathcal{C}_0$ with corresponding dynamics
\begin{subequations}\label{eq:ODEsystem_case(i)_superslow_C0}
 \begin{align}
  u_z &= p,\\
  p_z &= -\mathcal{A}\left(1-u\right).
 \end{align}
\end{subequations}
Within $\mathcal{C}_0$, the fixed point $(1,0)$ has one-dimensional stable and unstable manifolds given by the lines 
\begin{subequations}\label{case(i)_superslow_ell0pm}
\begin{align}
   \mathcal{W}_0^\mathrm{u}(1,0) &= \ell^+_0:=\left\{p = \sqrt{\mathcal{A}}\left(u-1\right)\right\},\\
   \mathcal{W}_0^\mathrm{s}(1,0) &= \ell^-_0:=\left\{p = \sqrt{\mathcal{A}}\left(1-u\right)\right\}.
\end{align}
\end{subequations}
We next consider the flow on the manifold $\mathcal{C}_2$ \eqref{eq:ODEsystem_case(i)_superslow}, with dynamics
\begin{subequations}\label{eq:ODEsystem_case(i)_superslow_C2}
 \begin{align}
  u_z &= p,\\
  p_z &= u\, v_2(u)^2-\mathcal{A}\left(1-u\right).\label{eq:ODEsystem_case(i)_superslow_C2_p}
 \end{align}
\end{subequations}
Although the explicit expression of $v_2(u)$ is algebraically rather complicated, we can infer properties of the flow on $\mathcal{C}_2$ by solving the defining equation \eqref{eq:case(i)_cubicroots} for $u$, yielding
\begin{equation}\label{eq:case(i)_invertedcubic}
    u = \frac{\mathcal{B}}{v\left(1-\mathcal{H}v\right)\left(1 - k v\right)}.
\end{equation}
Note that the projection of $\mathcal{C}_1 \cup \mathcal{C}_2 \cup \mathcal{C}_3$ \eqref{eq:case(i)_C0_Ci} onto the $(v,u)$-plane is given by the graph of \eqref{eq:case(i)_invertedcubic}. The projection of the $p$-nullcline of \eqref{eq:ODEsystem_case(i)_superslow_C2} onto the $(v,u)$-plane is given by the graph of $u = \frac{\mathcal{A}}{\mathcal{A} + v^2}$. As this graph is strictly monotonically decreasing for $v>0$, intersects the $u$-axis at $u=1$, and limits to $0$ as $v \to \infty$, its intersections with the graph of \eqref{eq:case(i)_invertedcubic} occur for $u$-values between $0$ and $1$.\\
Assume that none of these intersections lie on the projection of $\mathcal{C}_2$ onto the $(v,u)$-plane, which means that \eqref{eq:ODEsystem_case(i)_superslow_C2} does not have an equilibrium. Then the sign of $p_z$ is fixed and positive, as the right hand side of \eqref{eq:ODEsystem_case(i)_superslow_C2_p} is positive for $u>1$. Let $(u_{\text{min}},\infty)$ be the projection of $\mathcal{C}_2$ onto the $u$-axis. From the Hamiltonian structure of \eqref{eq:ODEsystem_case(i)_superslow_C2}, it follows that for all $\hat{u} \in (u_{\text{min}},1)$ the orbit of \eqref{eq:ODEsystem_case(i)_superslow_C2} through the point $(\hat{u},0)$ intersects the line $\ell^-_0$ at $\left(u_*,\sqrt{\mathcal{A}}(1-u_*)\right)$, where $u_* \in (\hat{u}, 1)$. By reflection symmetry in the $u$-axis, the same orbit intersects the line $\ell^+_0$ at $\left(u_*,\sqrt{\mathcal{A}}(u_*-1)\right)$.\\
Alternatively, assume that the graph of $u = \frac{\mathcal{A}}{\mathcal{A}+v^2}$ does intersect the projection of $\mathcal{C}_2$ onto the $(v,u)$-plane, for a certain value $u = \hat{u}<1$. Linearisation of \eqref{eq:ODEsystem_case(i)_superslow_C2} at $(\hat{u},0)$ shows that this equilibrium is a saddle, as $\frac{\text{d}}{\text{d} u} v_2(u) > 0$. Again, from the Hamiltonian structure of \eqref{eq:ODEsystem_case(i)_superslow_C2}, it follows that for all $\tilde{u} \in (\hat{u},1)$ the orbit of \eqref{eq:ODEsystem_case(i)_superslow_C2} through the point $(\tilde{u},0)$ intersects the line $\ell^-_0$ at $\left(u_*,\sqrt{\mathcal{A}}(1-u_*)\right)$, where $u_* \in (\tilde{u}, 1)$. By reflection symmetry in the $u$-axis, the same orbit intersects the line $\ell^+_0$ at $\left(u_*,\sqrt{\mathcal{A}}(u_*-1)\right)$.\\
In either case, we denote the orbit segment from $\left(u_*,\sqrt{\mathcal{A}}(u_*-1)\right)$ to $\left(u_*,\sqrt{\mathcal{A}}(1-u_*)\right)$ by $\ell_2(u_*)$; see Figure \ref{fig:case(i)_dynamics_C2}.\\

\begin{figure}
    \centering
    \includegraphics[width=0.8\textwidth]{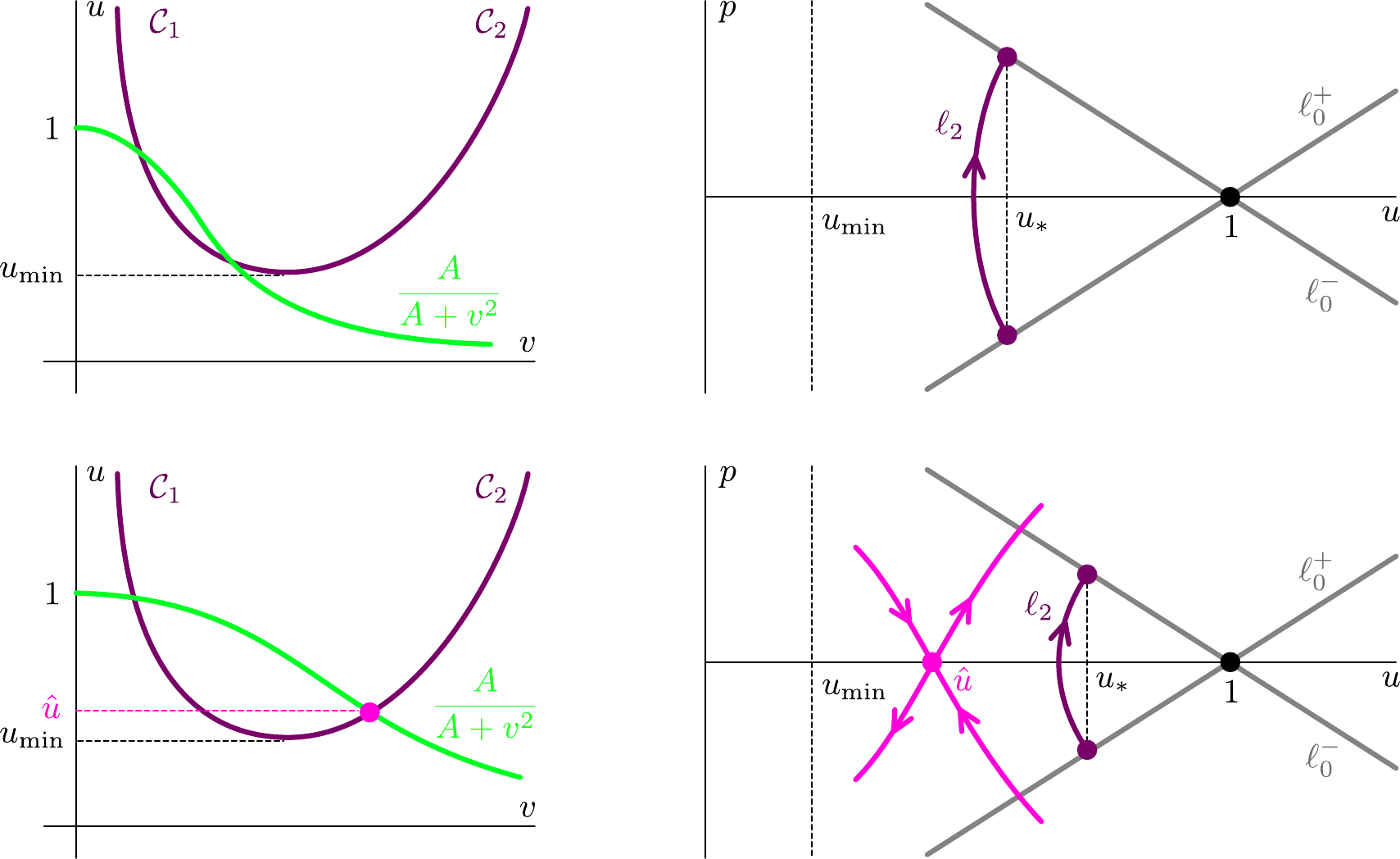}
    \caption{Top row, left: no equilibrium on $\mathcal{C}_2$; right: the associated superslow flow on $\mathcal{C}_2$ with orbit segment $\ell_2$. Bottom row, left: one equilibrium on $\mathcal{C}_2$; right: the associated superslow flow on $\mathcal{C}_2$ with saddle at $(\hat{u},0)$ and orbit segment $\ell_2$.}
    \label{fig:case(i)_dynamics_C2}
\end{figure}

We now construct slow/fast singular heteroclinic orbits connecting the equilibria $(v,q,s)=(0,0,0)$ and $(v,q,s) = (v_2(u_*),0,s_2(u_*))$ within the system
\begin{subequations}\label{eq:ODEsystem_case(i)_slowfast}
 \begin{align}
 v_\zeta &= q,\\
   q_\zeta &= \mathcal{B} v  - u_* v^2(1-k v) + \mathcal{H} v s - c q,\\
 s_\zeta &= \delta\left(- \mathcal{B} v- \mathcal{H} v s + s\right). 
 \end{align}
\end{subequations}
in the limit $\delta\to0$. Consider the fast dynamics~\eqref{eq:ODEsystem_case(i)_layer} in the subspace $\{u=u_*\}$. We recall from~\S\ref{sec:case(i)_fast} the existence of the heteroclinic orbits $\phi_\diamond(\zeta;u_0,0)$ and $\phi_\dagger(\zeta;u_0,s_*)$, where $\phi_\dagger(\zeta;u_0,s_*)$ jumps from $\mathcal{E}_0$ and $\mathcal{E}_1^+$ in the subspace $\left\{s=s_*\right\}$, and $\phi_\diamond(\zeta;u_0,0)$ jumps from $\mathcal{E}_1^+$ back to $\mathcal{E}_0$ in the subspace $\left\{s=0\right\}$, provided $u_0 > 4 k \mathcal{B}$ \eqref{eq:case(i)_assumption_u0_heteroclinic}.

\begin{figure}
    \centering
    \includegraphics[width=0.45\textwidth]{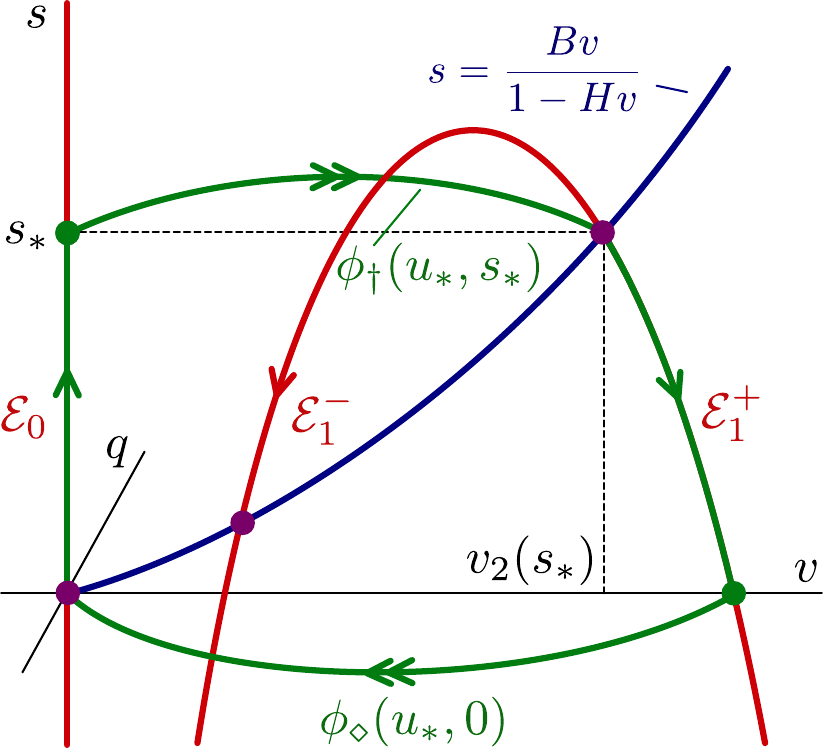}
    \caption{The singular slow/fast heteroclinic orbits $\mathcal{E}_0[0,s_*] \cup \phi_\dagger(\zeta;u_*,s_*)$ and  $\mathcal{E}_1^+[0,s_*] \cup \phi_\diamond(\zeta;u_*,0)$ in the subspace $u = u_*$ in the case with superslow plateau.}
    \label{fig:case(i)_plateau}
\end{figure}

We can construct a slow/fast heteroclinic orbit between $(v,q,s)=(0,0,0)$ and $(v,q,s) = (v_2(u_*),0,s_2(u_*))$ by first following the slow reduced flow on $\mathcal{E}_0$ from $s=0$ to $s=s_2(u_*)$, then following the fast jump $\phi_\dagger(\zeta;u_*,s_*)$ upon choosing $s_* = s_2(u_*)$. Likewise, a heteroclinic orbit between $(v,q,s) = (v_2(u_*),0,s_2(u_*))$ and $(v,q,s)=(0,0,0)$ can be obtained by following the slow flow on $\mathcal{E}_1^+$ from $s=s_2(u_*)$ to $s=0$, then taking the fast jump $\phi_\diamond(\zeta;u_*,0)$ back to the fixed point $(v,q,s)=(0,0,0)$. In order for this pair of heteroclinic orbits to exist simultaneously, the associated fast jumps $\phi_\dagger(\zeta;u_*,s_2(u_*))$ and $\phi_\diamond(\zeta;u_*,0)$ must have the same speed, that is, we must choose $u_*$ so that $c_\dagger(u_*,s_2(u_*))=c_\diamond(u_*,0)$, which results in the equation
\begin{align}
\sqrt{\frac{ku_*}{2}}\left(\frac{2}{k}-3v_+(u_*,s_2(u_*))\right) = \sqrt{\frac{ku_*}{2}}\left(3v_+(u_*,0) - \frac{2}{k}\right)
\end{align}
or equivalently,
\begin{align}\label{eq:case(i)_s_2_ustar}
s_2(u_*) = \frac{\left(-1+3\sqrt{1-\frac{4k\mathcal{B}}{u_*}}\right)u_*}{9\mathcal{H}k}.
\end{align}
where $s_*:=s_2(u_*)$ satisfies
\begin{align}
s_* &= \frac{\mathcal{B}v_+(u_*,s_*)}{1-\mathcal{H}v_+(u_*,s_*)}= \frac{\mathcal{B}\left(5-3\sqrt{1-\frac{4k\mathcal{B}}{u_*}}\right)}{6k-\mathcal{H}\left(5-3\sqrt{1-\frac{4k\mathcal{B}}{u_*}}\right)}.
\end{align}
Equating these two expressions, we obtain an equation for $U_* := \sqrt{1-\frac{4 k \mathcal{B}}{u_*}}$:
\begin{equation}\label{eq:case(i)_cubic_Ustar}
    \frac{\mathcal{B}}{9 \mathcal{H}} \frac{24 k (1-3U_*)+\mathcal{H}(5-3 U_*)^2 (1+3U_*)}{(U_*^2-1)(6k+\mathcal{H}(3U_*-5))} =0.
\end{equation}
From the analysis of the flow on $\mathcal{C}_2$ \eqref{eq:ODEsystem_case(i)_superslow_C2}, we know that for all $\mathcal{A}>0$ there exists a $0<\tilde{u}<1$ such that for all $u_* \in (\tilde{u},1)$, there exists an orbit segment $\ell_2(u_*)$ that connects $\ell_0^+$ to $\ell_0^-$. Note that this admissible interval can be made arbitrarily small by taking $\mathcal{A}$ sufficiently large, so that the saddle equilibrium $(\hat{u},0)$ is arbitrarily close to $(1,0)$.\\
The conditions \eqref{eq:case(i)_v2_on_E+} combined with the fact that $u_*<1$ translate to
\begin{equation}
   \sqrt{\frac{\mathcal{H}}{2k}} < U_* < \sqrt{1-4 k \mathcal{B}}.
\end{equation}

The existence of roots of \eqref{eq:case(i)_cubic_Ustar} in the required interval $\sqrt{\frac{\mathcal{H}}{2k}} < U_* < \sqrt{1-4 k \mathcal{B}}<1$ can be investigated by introducing $\gamma := \frac{\mathcal{H}}{k}$ and $x=3 U_*$. We determine the positive ($x>0$) intersections of the cubic $p(x) = (5-x)^2(1+x)$ with the line $l(x) = \frac{24}{\gamma}(x-1)$, in the interval $1<x<3$. The monotonicity properties of both $p$ and $l$ on this interval ensure that a unique positive intersection $x(\gamma)$ exists for all $0<\gamma<3$; its inverse is given by $\gamma(x) = \frac{24(x-1)}{(5-x)^2(x+1)}$, with $1<x<3$. We can directly calculate that $\gamma'(x) > 0$ and $\gamma''(x)>0$ for all $0<x<3$; hence, $\gamma(x)$ is a strictly monotonically increasing, convex function of $x$ on the interval $(0,3)$. It follows that $x(\gamma)$ is a strictly monotonically increasing, concave function of $\gamma$ on the interval $\gamma \in (0,3)$, with $x(0)=1$ and $x(3)=3$.\\
The lower inequality $\sqrt{\frac{\mathcal{H}}{2k}} < U_*$ translates to $\frac{3}{\sqrt{2}} \sqrt{\gamma} < x(\gamma)$. As the function $\gamma \mapsto \frac{3}{\sqrt{2}} \sqrt{\gamma}$ is strictly monotonically increasing and concave on the interval $\gamma \in (0,3)$, and since $x(0) = 1 > \frac{3}{\sqrt{2}} \sqrt{0} = 0$ and $x(3) = 3 < \frac{3}{\sqrt{2}} \sqrt{3}$, there exists a unique value $\gamma_*$ for which $x(\gamma_*) = \frac{3}{\sqrt{2}} \sqrt{\gamma}$, and that $\frac{3}{\sqrt{2}} \sqrt{\gamma} < x(\gamma)$ for all $0<\gamma<\gamma_*$. We can determine $\gamma_*$ explicitly by applying the remainder theorem to the polynomial $\frac{2}{9} x^2(5-x)^2(x+1)-24(x-1)$ since $\gamma_* = \frac{2}{9} x_*^2$, which yields $x_*=2$ (corresponding to $\gamma_*=\frac{8}{9}$) as an integer root, which also conveniently lies in the required interval $1<x<3$. Hence, we find that for all $0 < \gamma = \frac{\mathcal{H}}{k} < \frac{8}{9}$, there exists a unique root of \eqref{eq:case(i)_cubic_Ustar} with $1 < x_* = 3 U_* < 2$.
For the upper inequality $U_* < \sqrt{1-4 k \mathcal{B}}$, it is therefore sufficient to require $\frac{2}{3} < \sqrt{1-4 k \mathcal{B}}$, which implies $k \mathcal{B} < \frac{5}{36}$.\\



\begin{figure}
    \centering
    \includegraphics[width=0.5\textwidth]{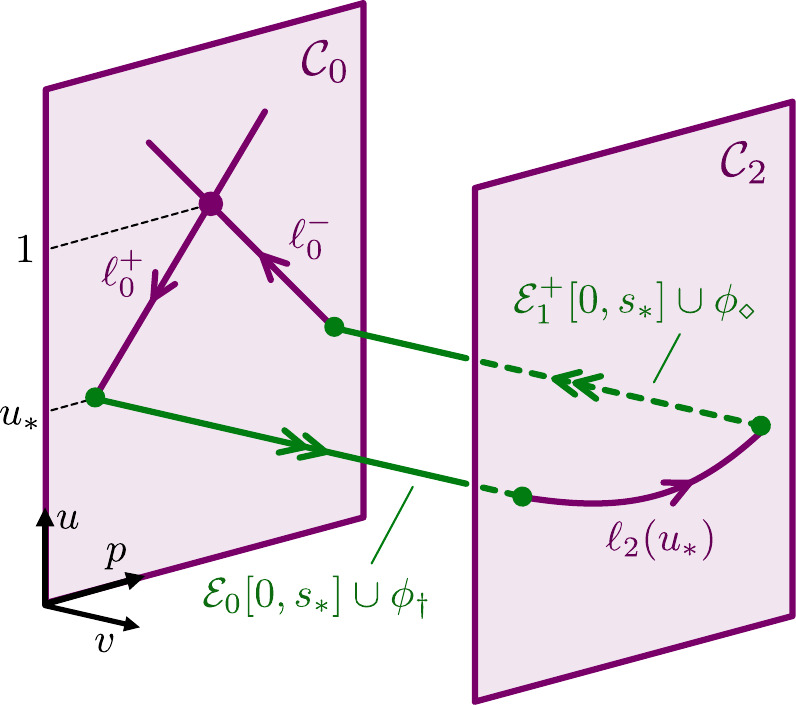}
    \caption{The singular homoclinic orbit $\ell^+_0 \cup \mathcal{E}_0[0,s_*] \cup \phi_\dagger(\cdot;u_*,s_*) \cup \ell_2(u_*) \cup \mathcal{E}_1^+[0,s_*] \cup \phi_\diamond(\cdot;u_*,0)\cup \ell^-_0$ in the case with superslow plateau.}
    \label{fig:case(i)_superslow_reduced}
\end{figure}

With $u_*, s_*$ defined as above, we can now complete the construction of a singular homoclinic orbit in~\eqref{eq:ODEsystem_case(i)} as follows. Starting at the equilibrium $(u,p,v,q,s)=(1,0,0,0,0)$, the orbit follows the super slow flow along the line $\ell^+_0$ for decreasing $u$ until reaching $(u,p) = (u_*,\mathcal{A}(u_*-1))$. Then within the subspace $(u,p)=(u_*,\mathcal{A}(u_*-1))$, the orbit traverses the slow manifold $\mathcal{E}_0$ from $s=0$ to $s=s_*$, followed by the fast heteroclinic orbit $\phi_\dagger(\zeta;u_*,s_*)$ to the equilibrium $(v,q,s) = (v_2(u_*),0,s_*)$ on the critical manifold $\mathcal{E}_1^+$; see Figure~\ref{fig:case(i)_plateau}. Next the superslow plateau trajectory $\ell_2(u_*)$ is traversed, which connects the points $(u,p) = (u_*,\pm \mathcal{A}(u_*-1))$ via the superslow flow on $\mathcal{C}_2$. Next, within the subspace $(u,p)=(u_*,-\mathcal{A}(u_*-1))$, the critical manifold $\mathcal{E}_1^+$ is traversed from $s=s_*$ down to $s=0$, followed by the fast heteroclinic orbit $\phi_\diamond(\zeta;u_*,0)$, which returns to the equilibrium $(v,q,s) = (0,0,0)$ on $\mathcal{E}_0$. Lastly the line $\ell^-_0$ is traversed for increasing $u$ from $(u,p) = (u_*,-\mathcal{A}(u_*-1))$ back to the equilibrium $(u,p,v,q,s)=(1,0,0,0,0)$. Taken together, the singular orbit is formed by the concatenation of superslow/slow/fast orbits given by
\begin{align}\label{case(i)_homoclinic_concatenation_slow_plateau}
\ell^+_0 \cup \mathcal{E}_0[0,s_*] \cup \phi_\dagger(\cdot;u_*,s_*) \cup \ell_2(u_*) \cup \mathcal{E}_1^+[0,s_*] \cup \phi_\diamond(\cdot;u_*,0)\cup \ell^-_0.
\end{align}
See Figures~\ref{fig:case(i)_plateau} and~\ref{fig:case(i)_superslow_reduced} for a visualization of the singular homoclinic orbit.\\

\subsection{Existence theorems}
The singular profiles for homoclinic orbits \eqref{case(i)_homoclinic_concatenation}/\eqref{case(i)_homoclinic_concatenation_slow_plateau}, both with and without slow plateau, are the key element in the following existence theorems, where the existence of associated homoclinic orbits in the full system \eqref{eq:ODEsystem_modified_rescaled}/\eqref{eq:ODEsystem_modified_fast_rescaled} is established for asymptotically small values of $\eps$ and $\delta$, in the relative scaling \eqref{eq:scaling_eps_delta_case(i)}.


\begin{thm}[Homoclinic orbits without superslow plateau]\label{case(i)_thm_noplateau} Take $c = c_\diamond(1,0)$ \eqref{eq:ODEsystem_case(i)_heteroclinics_wavespeeds}. Assume that
\begin{equation}\label{case(i)_thm_assumptions1}
    \frac{5}{9} < 4 k \mathcal{B} < \frac{8}{9},
\end{equation}
and that at least one of the following inequalities holds:
\begin{equation}\label{case(i)_thm_assumptions2}
    \mathcal{H}<2k,\quad \frac{1-4 k \mathcal{B}}{4k \mathcal{B}} < \frac{ \mathcal{H}}{2k-\mathcal{H}},\quad \frac{\mathcal{B} v_2(1)}{1-\mathcal{H} v_2(1)} > 1, 
\end{equation}
where $v_2(1)$ is the largest real solution to \eqref{eq:case(i)_cubicroots} for $u=1$. There exists $\delta_0 > 0$ such that for all $0<\delta<\delta_0$, there exists $\eps_0(\delta)$ such that for all $0<\eps<\eps_0(\delta)$, there exists a solution $\left(u_{h1},p_{h1},v_{h1},q_{h1},s_{h1}\right)$ to system \eqref{eq:ODEsystem_modified_fast_rescaled} that is homoclinic to the equilibrium $(1,0,0,0,0)$. To leading order in $\eps$ and $\delta$, this solution is given by the concatenation
\begin{displaymath}
\mathcal{E}_0[0,s_*] \cup \phi_\dagger(\zeta;1,s_*) \cup \mathcal{E}_1^+[0,s_*] \cup \phi_\diamond(\zeta;1,0)
\end{displaymath}
(cf. \eqref{case(i)_homoclinic_concatenation}), and its speed $c$ is given by
\begin{align}
    c = c_\diamond(1,0)+\mathcal{O}(|\eps|+|\delta|).
\end{align}

\end{thm}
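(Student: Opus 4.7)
The plan is to construct the homoclinic orbit as a transverse intersection of $W^u(1,0,0,0,0)$ and $W^s(1,0,0,0,0)$ in the full system \eqref{eq:ODEsystem_modified_fast_rescaled}, parametrised by the wave speed $c$, using geometric singular perturbation theory applied iteratively in the double singular limit $\eps \to 0$ (with $\delta$ fixed and small) followed by $\delta \to 0$. A dimension count at the saddle $(1,0,0,0,0)$ gives $\dim W^u = 3$ and $\dim W^s = 2$, so in the extended $(\text{phase space})\times\{c\}$ one expects a one-parameter family of homoclinic candidates shooting to a codimension-one matching condition, resolved by choosing $c = c_\diamond(1,0) + \mathcal{O}(\eps + \delta)$.

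The first step is to invoke Fenichel's theorem twice. Taking $\eps \to 0$ in the semi-reduced system \eqref{eq:ODEsystem_modified_rescaled_semireduced_2}, I obtain persistent normally hyperbolic locally invariant manifolds $\mathcal{E}_0^{\eps,\delta}$ and $\mathcal{E}_1^{+,\eps,\delta}$ perturbing the critical manifolds \eqref{eq:case(i)_E0_Epm}, together with their stable and unstable foliations; these persist uniformly for $\delta$ in a bounded range away from the fold $\mathcal{F}$, where normal hyperbolicity of $\mathcal{E}_1^+$ requires $c \neq 0$. The reduced flow on these perturbed manifolds is an $\mathcal{O}(\eps)$-perturbation of \eqref{eq:case(i)_sflow_E0}, \eqref{eq:ODEsystem_case(i)_slowreduced} in $s$, with a slaved $(u,p)$-drift of order $\mathcal{O}(\eps/\delta)$ per unit intermediate time $y$.

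The second step is to track $W^u(1,0,0,0,0)$ forward along the singular skeleton and match it with $W^s(1,0,0,0,0)$ tracked backward. The unstable manifold leaves the saddle along $\mathcal{E}_0^{\eps,\delta}$ (using that $s=0$ is repelling on $\mathcal{E}_0$ by \eqref{eq:case(i)_sflow_E0}), ascends in $s$ to a neighbourhood of $s_*$, and enters the fast layer near $\phi_\dagger(\zeta;1,s_*)$. Lemma \ref{lem:case(i)_layer_transversality} provides transverse intersection of $W^u(\mathcal{E}_0^{\eps,\delta})$ and $W^s(\mathcal{E}_1^{+,\eps,\delta})$ in $(u,s,c)$, with the splitting distance \eqref{eq:heteroclinic_split} controlled by the nonzero Melnikov coefficients. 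The exchange lemma then transports the tracked manifold exponentially close to $\mathcal{E}_1^{+,\eps,\delta}$, along which the intermediate $s$-flow carries it from $s \approx s_*$ down to $s \approx 0$; assumptions \eqref{case(i)_thm_assumptions2} ensure that no equilibrium $(v_2(1),0,s_2(1))$ obstructs this descent. A second application of Lemma \ref{lem:case(i)_layer_transversality} handles the transverse $\phi_\diamond(\zeta;1,0)$ jump back to $W^s(\mathcal{E}_0^{\eps,\delta})$. The matching condition $c_\dagger(1,s_*) = c_\diamond(1,0)$ is solvable by \eqref{case(i)_solution_sstar} under \eqref{case(i)_thm_assumptions1}, and the simultaneous realisability of both jumps at a single $c$ is enforced via the implicit function theorem using $M^c_\dagger, M^c_\diamond > 0$.

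The principal obstacle, and the place where the hierarchy $\eps \ll \delta$ is essential, is that the singular skeleton lies in the non-invariant subspace $\{u=1, p=0\}$, so the perturbed orbit accumulates a $(u,p)$-drift that must be absorbed by the superslow dynamics on $\mathcal{C}_0$ near the saddle $(1,0)$ without missing $W^s(1,0,0,0,0)$. The drift along the intermediate plateaus on $\mathcal{E}_0^{\eps,\delta}$ and $\mathcal{E}_1^{+,\eps,\delta}$ is of order $\mathcal{O}(\eps/\delta)$, while the fast jumps contribute $\mathcal{O}(\eps)$; both vanish in the ordered limit, so a standard shooting/implicit function argument in $(c,\text{exit point})$-space, combined with the local saddle structure encoded by $\ell_0^\pm$ in \eqref{case(i)_superslow_ell0pm}, closes the loop and delivers the stated expansion $c = c_\diamond(1,0) + \mathcal{O}(|\eps|+|\delta|)$.
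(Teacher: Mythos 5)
Your proposal follows essentially the same geometric route as the paper's proof: Fenichel persistence of $\mathcal{E}_0$ and $\mathcal{E}_1^+$, Melnikov transversality of $\phi_\dagger$ and $\phi_\diamond$ via Lemma~\ref{lem:case(i)_layer_transversality}, the exchange lemma, the role of assumptions \eqref{case(i)_thm_assumptions1}--\eqref{case(i)_thm_assumptions2} in ensuring $s_*>0$ and a monotone $s$-descent on $\mathcal{E}_1^+$, and the final $c$-adjustment. Your dimension count at $P_0=(1,0,0,0,0)$ is correct. The only place where your argument is weaker than the paper's is the closing step, where you control the $(u,p)$-drift by the heuristic estimate ``$\mathcal{O}(\eps/\delta)$ per unit intermediate time.'' That per-unit estimate is right, but the transit times along $\mathcal{E}_0^{\eps,\delta}$ and $\mathcal{E}_1^{+,\eps,\delta}$ are not $\mathcal{O}(1)$ --- they diverge as the orbit enters and exits neighborhoods of $s=0$ --- so multiplying drift-rate by transit-time does not by itself give a small total; one needs the exponential-closeness estimates the exchange lemma provides \emph{and} a careful matching at the saddle $(1,0)$ on $\mathcal{C}_0$, neither of which your final paragraph makes explicit.

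The paper sidesteps this accounting entirely by splitting the persistence into two cleanly nested Fenichel applications that mirror the quantifier structure of the theorem. Stage (A): in the $\eps$-independent semi-reduced system \eqref{eq:ODEsystem_modified_rescaled_semireduced_2} with $(u,p)$ frozen at $(1,0)$, treat $\delta$ as the singular parameter, track $\mathcal{W}^u((0,0,0))$ along $\phi_\dagger$ and, via the exchange lemma, along $\mathcal{E}_1^+$, and solve for $c=c_0(\delta)=c_\diamond(1,0)+\mathcal{O}(\delta)$ so that it meets $\mathcal{W}^s((0,0,0))$; this produces an actual (non-singular) homoclinic orbit of the three-dimensional $(v,q,s)$ subsystem for all $0<\delta<\delta_0$, breaking transversely in $c$. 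Stage (B): for this fixed $\delta$, treat $\eps$ as the singular parameter in the full five-dimensional system \eqref{eq:ODEsystem_case(i)}: the two-dimensional superslow critical manifold $\mathcal{C}_0=\{v=q=s=0\}$ persists along with its three- and four-dimensional stable/unstable manifolds, the manifolds $\mathcal{W}^{u,s}(P_0)$ are described at $\eps=0$ as unions of fibers $\mathcal{W}_0^{u,s}((0,0,0))$ over $\ell_0^\pm\subset\mathcal{C}_0$, these intersect at $\eps=0$ for $c=c_0(\delta)$ along the Stage-(A) orbit, and the $c$-transversality lifts the intersection to small $\eps>0$ with $c=c_0(\delta)+\mathcal{O}(\eps)$. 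In this formulation the $(u,p)$-drift is never estimated explicitly --- it is absorbed into the Fenichel persistence of $\mathcal{C}_0$ and the fibration description of $\mathcal{W}^{u,s}(P_0)$. If you replace your shooting/drift heuristic by this second stage, your route becomes the paper's.
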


\begin{proof}
We consider system \eqref{eq:ODEsystem_modified_rescaled_semireduced_2}, which is independent of $\eps$. For fixed $u=u_0$ and $p=p_0$, \eqref{eq:ODEsystem_modified_rescaled_semireduced_2} is a singularly perturbed system when $0<\delta\ll 1$. From Fenichel theory, it follows that there exists a $0<\delta_0$ such that for all $0<\delta<\delta_0$, the normally hyperbolic branches $\mathcal{E}_0$ and $\mathcal{E}_1^+$ of the one-dimensional critical manifold $\mathcal{E}$ \eqref{eq:case(i)_E0_Epm} persist as locally invariant manifolds. The two-dimensional stable and unstable manifolds of both branches, $\mathcal{W}^{u,s}(\mathcal{E}_0)$ and $\mathcal{W}^{u,s}(\mathcal{E}_1^+)$, persist as well.

Fix $u_0=1$ and $p_0 = 0$. For $\delta=0$, $\mathcal{W}^u(\mathcal{E}_0)$ and $\mathcal{W}^s(\mathcal{E}_1^+)$ intersect transversely along the heteroclinic orbit $\phi_\dagger(1,s_*)$ \eqref{eq:ODEsystem_case(i)_heteroclinics}, with $s_*$ given by \eqref{case(i)_solution_sstar}. Note that both $s_*$ and $v_2(1)$ are positive due to \eqref{case(i)_thm_assumptions1}. Likewise, $\mathcal{W}^u(\mathcal{E}_1^+)$ and $\mathcal{W}^s(\mathcal{E}_0)$ intersect transversely in the heteroclinic orbit $\phi_\diamond(1,0)$ \eqref{eq:ODEsystem_case(i)_heteroclinics}, since $c_\dagger(1,s_*) = c_\diamond(1,0)$ by the choice of $s_*$ \eqref{case(i)_solution_sstar}. We note that due to  Lemma~\ref{lem:case(i)_layer_transversality}, the heteroclinic connection $\phi_\dagger(1,s_*)$ breaks transversely upon varying $(u,s,c)$ near $(1,s_*,c_\dagger(1,s_*))$, while $\phi_\diamond(1,0)$ breaks transversely upon varying $(u,c)$ near $(1,c_\diamond(1,0))$. The absence of an equilibrium of \eqref{eq:ODEsystem_case(i)_slowreduced} on $\mathcal{E}_1^+$ by assumptions \eqref{case(i)_thm_assumptions2} ensures the existence of a singular homoclinic concatenation \eqref{case(i)_homoclinic_concatenation}. 

For all sufficiently small $\delta_0$, the two-dimensional unstable manifold $\mathcal{W}^\mathrm{u}(0,0,0)$ of the equilibrium $(v,q,s)=(0,0,0)$ of \eqref{eq:ODEsystem_modified_rescaled_semireduced_2} restricted to the subspace $\{u_0=1,p_0=0\}$ coincides with the perturbed manifold $\mathcal{W}^u(\mathcal{E}_0)$, while the one-dimensional stable manifold $\mathcal{W}^\mathrm{s}(0,0,0)$ lies on $\mathcal{W}^\mathrm{s}(\mathcal{E}_0)$, $\mathcal{O}(\delta)$-close to the singular heteroclinic connection $\phi_\diamond(1,0)$. For values of $c\approx c_\dagger(1,s_*)$ and sufficiently small $\delta>0$, we track the manifold $\mathcal{W}^\mathrm{u}(0,0,0)$ along the heteroclinic orbit $\phi_\dagger(1,s_*)$; due to the transverse intersection of $\mathcal{W}^u(\mathcal{E}_0)$ and $\mathcal{W}^s(\mathcal{E}_1^+)$ for $\delta=0$, we have that $\mathcal{W}^\mathrm{u}(0,0,0)$ transversely intersects $\mathcal{W}^s(\mathcal{E}_1^+)$ and, by the exchange lemma~\cite{Schecter.2008}, aligns exponentially close (in $\delta$) to $\mathcal{W}^u(\mathcal{E}_1^+)$ upon exiting a neighborhood of $\mathcal{E}_1^+$ near $s=0$. Using the transversality of the heteroclinic connection $\phi_\diamond(1,0)$ with respect to the wave speed $c$, there exists
\begin{align}
   c= c_0(\delta)=c_\diamond(1,0)+\mathcal{O}(\delta) = c_\dagger(1,s_*)+\mathcal{O}(\delta)
\end{align}
such that $\mathcal{W}^\mathrm{u}(0,0,0)$ intersects $\mathcal{W}^\mathrm{s}(0,0,0)$ upon exiting a neighborhood of $\mathcal{E}_1^+$, corresponding to a homoclinic orbit $(v_\mathrm{h,0},q_\mathrm{h,0},s_\mathrm{h,0})$ of  \eqref{eq:ODEsystem_modified_rescaled_semireduced_2} in the subspace $\{u=1,p=0\}$ which is $\mathcal{O}(\delta)$-close to the singular concatenation \eqref{case(i)_homoclinic_concatenation}. Further, this homoclinic orbit breaks transversely upon varying the speed $c$ near $c_0(\delta)$.

Now, fix $0<\delta<\delta_0$ and consider \eqref{eq:ODEsystem_case(i)}, which has $P_0 := (1,0,0,0,0)$ as a hyperbolic equilibrium which, for $\eps=0$, lies on the two-dimensional normally hyperbolic critical manifold $\mathcal{C}_0$. The manifold $\mathcal{C}_0$ persists for sufficiently small $\eps>0$, as does its three-dimensional stable manifold $\mathcal{W}^\mathrm{s}(\mathcal{C}_0)$ and four-dimensional unstable manifold $\mathcal{W}^\mathrm{u}(\mathcal{C}_0)$. In the limit $\eps \to0$, the unstable manifold $\mathcal{W}^\mathrm{u}(P_0)$ of $P_0$ corresponds to the three dimensional manifold obtained by taking the union
\begin{align}
   \mathcal{W}^\mathrm{u}(P_0) = \bigcup_{(u,p)\in \ell_0^+} \mathcal{W}_0^u\left((0,0,0)\right)
\end{align}
of the two dimensional manifolds $\mathcal{W}_0^u\left((0,0,0)\right)$ of the origin in $(v,q,s)$-space in~\eqref{eq:ODEsystem_modified_rescaled_semireduced_2} for $(u,p)\in \ell_0^+\subseteq \mathcal{C}_0$. Similarly the the stable manifold $\mathcal{W}^\mathrm{s}(P_0)$ of $P_0$ corresponds to the two dimensional manifold obtained by taking the union
\begin{align}
   \mathcal{W}^\mathrm{s}(P_0) = \bigcup_{(u,p)\in \ell_0^-} \mathcal{W}_0^s\left((0,0,0)\right)
\end{align}
of the one-dimensional stable manifolds $\mathcal{W}_0^s\left((0,0,0)\right)$ for $(u,p)\in \ell_0^-\subseteq \mathcal{C}_0$.

When $\eps = 0$, the manifolds $\mathcal{W}^s(P_0)$ and $\mathcal{W}^u(P_0)$ intersect in the subspace $\{u=1,p=0\}$ for $c=c_0(\delta)$, along the intersection of $\mathcal{W}_0^s\left((0,0,0)\right)$ with $\mathcal{W}_0^u\left((0,0,0)\right)$, corresponding to the homoclinic orbit $(v_\mathrm{h,0},q_\mathrm{h,0},s_\mathrm{h,0})$ constructed above. From Fenichel theory, it follows that there exists an $\eps_0(\delta)>0$ such that for all $0<\eps<\eps_0(\delta)$, both $\mathcal{W}^s(P_0)$ and $\mathcal{W}^u(P_0)$ persist as locally invariant manifolds. Due to the transversality of the homoclinic orbit $(v_\mathrm{h,0},q_\mathrm{h,0},s_\mathrm{h,0})$ with respect to varying $c$, by adjusting $c= c_0(\delta)+\mathcal{O}(\eps)$, we can ensure that $\mathcal{W}^s(P_0)$ and $\mathcal{W}^u(P_0)$ intersect along homoclinic orbit of the full system  \eqref{eq:ODEsystem_case(i)}, which is to leading order given by the homoclinic concatenation \eqref{case(i)_homoclinic_concatenation}.
\end{proof}

\begin{thm}[Homoclinic orbit with superslow plateau]\label{case(i)_thm_plateau} Assume that
\begin{equation}
    \mathcal{H}<\frac{8}{9} k\quad\text{and}\quad 1-4 k \mathcal{B}>0.
\end{equation}
Let $k, \mathcal{B}$ be such that the unique solution $U_*$ to \eqref{eq:case(i)_cubic_Ustar} satisfies 
\begin{equation}
\sqrt{\frac{\mathcal{H}}{2k}} < U_* < \sqrt{1-4 k \mathcal{B}}.
\end{equation}
Let $u_* = \frac{4 k \mathcal{B}}{1-U_*^2}$, fix $c = c_\diamond(u_*,0)$ \eqref{eq:ODEsystem_case(i)_heteroclinics_wavespeeds}, and let
\begin{equation}\label{case(i)_plateau_value_sstar}
    s_* = \frac{\mathcal{B}\left(5-3\sqrt{1-\frac{4k\mathcal{B}}{u_*}}\right)}{6k-\mathcal{H}\left(5-3\sqrt{1-\frac{4k\mathcal{B}}{u_*}}\right)}.
\end{equation}
There exists a $\delta_0 > 0$ such that for all $0<\delta<\delta_0$, there exists a $\eps_0(\delta)$ such that for all $0<\eps<\eps_0(\delta)$, there exists a solution $\left(u_{h2},p_{h2},v_{h2},q_{h2},s_{h2}\right)$ to system \eqref{eq:ODEsystem_modified_fast_rescaled} that is homoclinic to the equilibrium $(1,0,0,0,0)$. To leading order in $\eps$ and $\delta$, this solution is given by the concatenation 
\begin{displaymath}
    \ell^+_0 \cup \mathcal{E}_0[0,s_*] \cup \phi_\dagger(\cdot;u_*,s_*) \cup \ell_2(u_*) \cup \mathcal{E}_1^+[0,s_*] \cup \phi_\diamond(\cdot;u_*,0)\cup \ell^-_0
\end{displaymath}
(cf. \eqref{case(i)_homoclinic_concatenation_slow_plateau}), and its speed is given by
\begin{align}
    c =  c_\diamond(u_*,0)+\mathcal{O}(|\eps|+|\delta|)
\end{align}
\end{thm}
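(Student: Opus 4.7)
The plan is to proceed in two stages, paralleling the proof of Theorem~\ref{case(i)_thm_noplateau}, with the additional complication that the singular homoclinic orbit now transits through the non-trivial superslow critical manifold $\mathcal{C}_2$. In the first stage, I would work in the intermediate semi-reduced system \eqref{eq:ODEsystem_modified_rescaled_semireduced_2} with $(u,p)$ fixed as parameters and $0 < \delta \ll 1$, to persist the two fast-slow heteroclinic connections that form the up- and down-portions of the pulse. In the second stage, I would incorporate the superslow $(u,p)$ dynamics to close the singular concatenation \eqref{case(i)_homoclinic_concatenation_slow_plateau} into a genuine homoclinic orbit of the full 5D system \eqref{eq:ODEsystem_modified_fast_rescaled}, using the wave speed $c$ as the unique free parameter to solve the codimension-one matching condition between $\mathcal{W}^{\mathrm{u}}(P_0)$ and $\mathcal{W}^{\mathrm{s}}(P_0)$.

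For the first stage, I would fix $(u,p) = (u_*, p_*)$ with $p_* = \pm\sqrt{\mathcal{A}}(u_* - 1)$, the endpoints of the plateau trajectory $\ell_2(u_*)$ on $\ell_0^\pm$. By Fenichel theory, $\mathcal{E}_0$ and $\mathcal{E}_1^+$ persist as locally invariant slow manifolds for small $\delta$, together with their stable and unstable manifolds. The choice of $u_*$ as the (unique, by the assumed parameter range) solution of \eqref{eq:case(i)_cubic_Ustar} enforces $c_\dagger(u_*, s_*) = c_\diamond(u_*, 0)$ at leading order, with $s_*$ given by \eqref{case(i)_plateau_value_sstar}. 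Combining the transversality of the fast jumps $\phi_\dagger, \phi_\diamond$ provided by the Melnikov integrals in Lemma~\ref{lem:case(i)_layer_transversality} with a standard Fenichel/exchange-lemma argument in the $(v,q,s)$-subsystem, the singular concatenations $\mathcal{E}_0[0,s_*] \cup \phi_\dagger(\cdot; u_*, s_*)$ and $\mathcal{E}_1^+[0,s_*] \cup \phi_\diamond(\cdot; u_*, 0)$ persist for $\delta > 0$ as transverse connections between the stable/unstable manifolds of the origin and those of the persisted slow manifold $\mathcal{C}_2$ restricted to $u = u_*$, with $\mathcal{O}(\delta)$ corrections to the limiting wave speed.

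For the second stage, both $\mathcal{C}_0$ and $\mathcal{C}_2$ persist under $\eps$-perturbation, along with their stable and unstable manifolds. A direct eigenvalue computation at $P_0$ yields a 3D unstable and a 2D stable manifold (with eigenvalues $\pm\sqrt{\mathcal{A}} + \mathcal{O}(\eps)$ from the superslow block, $(-c\pm\sqrt{c^2+4\mathcal{B}})/(2\eps)$ from the fast block, and $\delta/\eps>0$ from the $s$-direction), so transverse intersection in the 5D ambient space generically occurs along isolated orbits, giving a single codimension-one condition adjustable via $c$. Following the singular concatenation \eqref{case(i)_homoclinic_concatenation_slow_plateau}, I would track $\mathcal{W}^{\mathrm{u}}(P_0)$ along $\ell_0^+ \subset \mathcal{C}_0$, through the intermediate-stage up-jump at $(u_*, p_*)$ (where the stage-one transversality forces entry into a neighbourhood of $\mathcal{C}_2$), and then apply the exchange lemma along the plateau $\ell_2(u_*) \subset \mathcal{C}_2$ to obtain exponential (in $\eps$) alignment of $\mathcal{W}^{\mathrm{u}}(P_0)$ with $\mathcal{W}^{\mathrm{u}}(\mathcal{C}_2)$ upon exit near $(u_*, -p_*)$. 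The intermediate-stage down-jump via $\phi_\diamond$ then returns the orbit to a neighbourhood of $\mathcal{C}_0$ near $\ell_0^-$, where a second exchange-lemma application produces alignment with $\mathcal{W}^{\mathrm{u}}(\mathcal{C}_0)$; the transverse intersection with $\mathcal{W}^{\mathrm{s}}(P_0) \subset \mathcal{W}^{\mathrm{s}}(\mathcal{C}_0)$ along $\ell_0^-$ is then achieved by adjusting $c = c_\diamond(u_*, 0) + \mathcal{O}(\eps + \delta)$.

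The main technical obstacle is the nested Fenichel/exchange-lemma argument across two sequential slow-manifold passages at two different timescales --- $\mathcal{E}_1^+$ on the intermediate $y$-scale and $\mathcal{C}_2$ on the superslow $z$-scale --- which requires $\eps$ to be taken sufficiently small relative to $\delta$, consistent with the hierarchy \eqref{eq:scaling_eps_delta_case(i)} and forcing $\eps_0$ to depend on $\delta$. In particular, the $\mathcal{O}(\delta)$ mismatch in wave speed between the two intermediate heteroclinics $\phi_\dagger, \phi_\diamond$, together with the sensitivity of the plateau width on $\mathcal{C}_2$ to $c$ via the superslow dynamics \eqref{eq:ODEsystem_case(i)_superslow_C2}, must be simultaneously resolved through the single free parameter $c$. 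The non-degeneracy of the Melnikov integrals in Lemma~\ref{lem:case(i)_layer_transversality} with respect to $u$ and $s$ ensures the robustness of the algebraically-determined matching condition \eqref{eq:case(i)_cubic_Ustar} on $u_*$ under these higher-order corrections.
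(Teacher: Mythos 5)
Your proposal follows essentially the same two-stage strategy as the paper's proof: first treat the semi-reduced $(v,q,s)$-system \eqref{eq:ODEsystem_modified_rescaled_semireduced_2} for small $\delta$ with $(u,p)$ frozen, persisting the two slow/fast heteroclinics between the origin on $\mathcal{E}_0$ and the equilibrium $(v_2(u_*),0,s_2(u_*))$ on $\mathcal{E}_1^+$ via Lemma~\ref{lem:case(i)_layer_transversality}; then lift to the full five-dimensional system, apply Fenichel theory and the exchange lemma along the superslow plateau $\ell_2(u_*)\subset\mathcal{C}_2$, and close the homoclinic by adjusting $c$. Your explicit eigenvalue count at $P_0$ (3D unstable, 2D stable) is correct and supports the same dimension bookkeeping the paper uses implicitly.

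Two minor imprecisions worth flagging. First, in your stage one you invoke "a standard Fenichel/exchange-lemma argument" to persist $\mathcal{E}_0[0,s_*]\cup\phi_\dagger$ and $\mathcal{E}_1^+[0,s_*]\cup\phi_\diamond$; but here the fast jump $\phi_\dagger$ lands exactly on the slow-flow equilibrium $(v_2(u_*),0,s_2(u_*))$ on $\mathcal{E}_1^+$, so there is no genuine slow passage along $\mathcal{E}_1^+$ requiring an exchange-lemma estimate --- the intersection of $\mathcal{W}_0^u((0,0,0))$ with the one-dimensional $\mathcal{W}_0^s((v_2(u_*),0,s_2(u_*)))$ (and likewise $\mathcal{W}_0^u$ of the equilibrium with $\mathcal{W}_0^s((0,0,0))$) persists for small $\delta$ directly by the transversality in $(u,c)$ from Lemma~\ref{lem:case(i)_layer_transversality}. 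The exchange lemma is only used once, in stage two, for the plateau passage near $\mathcal{C}_2$. Second, the paper does not apply a "second exchange-lemma" near $\mathcal{C}_0$ on the way back: upon exit from a neighbourhood of $\mathcal{C}_2$, $\mathcal{W}^\mathrm{u}(P_0)$ is aligned with $\mathcal{W}^\mathrm{u}(\mathcal{C}_2)$ and is intersected directly with $\mathcal{W}^\mathrm{s}(P_0)$ (which is tracked as a union of fibers over $\ell_0^-$), closing the orbit by the $c$-transversality alone. Neither issue represents a gap in the strategy; both amount to small redundancies relative to the paper's argument.
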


\begin{proof}
We consider system \eqref{eq:ODEsystem_modified_rescaled_semireduced_2}, which is independent of $\eps$. For fixed $u=u_0$ and $p=p_0$, \eqref{eq:ODEsystem_modified_rescaled_semireduced_2} is a singularly perturbed system when $0<\delta\ll 1$. From Fenichel theory, it follows that there exists a $0<\delta_0$ such that for all $0<\delta<\delta_0$, the normally hyperbolic branches $\mathcal{E}_0$ and $\mathcal{E}_1^+$ of the one-dimensional critical manifold $\mathcal{E}$ \eqref{eq:case(i)_E0_Epm} persist as locally invariant manifolds. The two-dimensional stable and unstable manifolds of both branches, $\mathcal{W}^{u,s}(\mathcal{E}_0)$ and $\mathcal{W}^{u,s}(\mathcal{E}_1^+)$, persist as well.\\
Fix $u_0=u_*$ and $p_0 = p_* = \sqrt{\mathcal{A}}(u_*-1)$ such that $(u_*,p_*) \in \ell_0^+$ \eqref{case(i)_superslow_ell0pm}. For $\delta=0$, $\mathcal{W}^u(\mathcal{E}_0)$ and $\mathcal{W}^s(\mathcal{E}_1^+)$ intersect transversely in the heteroclinic orbit $\phi_\dagger(u_*,s_*)$ \eqref{eq:ODEsystem_case(i)_heteroclinics}, with $s_*$ given by \eqref{case(i)_plateau_value_sstar}.
The concatenation $\mathcal{E}_0[0,s_*] \cup \phi_\dagger(\cdot;u_*,s_*)$ provides the intersection of the two-dimensional unstable manifold 
of the origin $\mathcal{W}_0^u\left((0,0,0)\right)$ with $\mathcal{W}_0^s \left( (v_2(u_*),0,s_2(u_*))\right)$, the one-dimensional stable manifold of the unique equilibrium on $\mathcal{E}_1^+$. We recall from Lemma~\ref{lem:case(i)_layer_transversality} that the heteroclinic connection $\phi_\dagger(\cdot;u_*,s_*)$ breaks transversely upon varying $u \approx u_*$ or upon varying the speed $c\approx c_\dagger(u_*,s_*)$. Thus for all sufficiently small $\delta>0$, we can ensure the intersection of $\mathcal{W}_0^u\left((0,0,0)\right)$ with $\mathcal{W}_0^s \left( (v_2(u_*),0,s_2(u_*))\right)$ persists, by varying $u$ (resp. $c$) near $u=u_*$ (resp. $c=c_\dagger(u_*,s_*)$).

Likewise, fix $u_0=u_*$ and $p_0 = -p_*$ such that $(u_*,p_*) \in \ell_0^-$ \eqref{case(i)_superslow_ell0pm}, $\mathcal{W}^u(\mathcal{E}_1^+)$ and $\mathcal{W}^s(\mathcal{E}_0)$ intersect transversely in the heteroclinic orbit $\phi_\diamond(u_*,0)$ \eqref{eq:ODEsystem_case(i)_heteroclinics}, since $c = c_\dagger(u_*,s_*) = c_\diamond(u_*,0)$ by the choice of $s_*$ \eqref{case(i)_plateau_value_sstar}. Since $s_2(u_*)$ is the only equilibrium of \eqref{eq:ODEsystem_case(i)_slowreduced} on $\mathcal{E}_1^+$, the concatenation $\mathcal{E}_1^+[0,s_*] \cup \phi_\diamond(\cdot;u_*,0)$ provides the intersection of $\mathcal{W}_0^u\left((v_2(u_*),0,s_2(u_*))\right)$, the two dimensional unstable manifold of the unique equilibrium on $\mathcal{E}_1^+$, with the one-dimensional stable manifold of the origin $\mathcal{W}_0^s\left((0,0,0)\right)$. By a similar argument as above, for all sufficiently small $\delta>0$, we can ensure the intersection of $\mathcal{W}_0^s\left((0,0,0)\right)$ with $\mathcal{W}_0^u \left( (v_2(u_*),0,s_2(u_*))\right)$ persists, by varying $u$ (resp. $c$) near $u=u_*$ (resp. $c=c_\diamond(u_*,0)$). In the same manner, this intersection breaks transversely upon varying $(u,c)$.

We can now lift these intersections into the geometry of the full five-dimensional system~\eqref{eq:ODEsystem_case(i)} when $\eps=0$. We recall that this system admits the two-dimensional critical manifold $\mathcal{C}$ \eqref{eq:case(1)_superslow_C} which has two normally hyperbolic branches, $\mathcal{C}_0$ and $\mathcal{C}_2$, and the equilibrium $P_0 := (1,0,0,0,0)$ lies on $\mathcal{C}_0$. These normally hyperbolic branches persist for sufficiently small $\eps>0$, as do their three-dimensional stable manifolds $\mathcal{W}^\mathrm{s}(\mathcal{C}_0), \mathcal{W}^\mathrm{s}(\mathcal{C}_2)$ and four-dimensional unstable manifolds $\mathcal{W}^\mathrm{u}(\mathcal{C}_0), \mathcal{W}^\mathrm{u}(\mathcal{C}_2)$. In the limit $\eps \to0$ the unstable manifold $\mathcal{W}^\mathrm{u}(P_0)$ corresponds to the three dimensional manifold obtained by taking the union
\begin{align}
   \mathcal{W}^\mathrm{u}(P_0) = \bigcup_{(u,p)\in \ell_0^+} \mathcal{W}_0^u\left((0,0,0)\right)
\end{align}
of the two dimensional manifolds $\mathcal{W}_0^u\left((0,0,0)\right)$ of the origin of $(v,q,s)$-space in~\eqref{eq:ODEsystem_modified_rescaled_semireduced_2} for $(u,p)\in \ell_0^+\subseteq \mathcal{C}_0$. Similarly the the stable manifold $\mathcal{W}^\mathrm{s}(P_0)$ of $P_0$ corresponds to the two dimensional manifold obtained by taking the union
\begin{align}
   \mathcal{W}^\mathrm{s}(P_0) = \bigcup_{(u,p)\in \ell_0^-} \mathcal{W}_0^s\left((0,0,0)\right)
\end{align}
of the one-dimensional stable manifolds $\mathcal{W}_0^s\left((0,0,0)\right)$ for $(u,p)\in \ell_0^-\subseteq \mathcal{C}_0$. By the argument above, when $c=c_\dagger(u_*,s_*)=c_\diamond(u_*,0)$ and $\delta=0$, $\mathcal{W}^\mathrm{s}(P_0)$ transversely intersects the four dimensional manifold $\mathcal{W}^u(\mathcal{C}_2)$ in the hyperplane $\left\{ u = u_*,\; p = -p_* \right\}$ along the intersection of $\mathcal{W}_0^s\left((0,0,0)\right)$ with $\mathcal{W}_0^u\left((v_2(u_*),0,s_2(u_*))\right)$, and this intersection persists for all sufficiently small $\delta>0$ and values of $c\approx c_\diamond(u_*,0)$.  Likewise, $\mathcal{W}^\mathrm{u}(P_0)$ transversely intersects the three dimensional manifold $\mathcal{W}^s(\mathcal{C}_2)$, in the hyperplane $\left\{ u = u_*,\; p = p_* \right\}$ along the intersection of $\mathcal{W}_0^u\left((0,0,0)\right)$ with $\mathcal{W}_0^s\left((v_2(u_*),0,s_2(u_*))\right)$, and this transverse intersection persists for all sufficiently small $\delta>0$ and values of $c\approx c_\dagger(u_*,s_*)$.

Now, fix $0<\delta<\delta_0$ and $c$ near $c_\dagger(u_*,s_*)$. There exists $\eps_0(\delta)$ such that for all sufficiently small $0<\eps<\eps_0(\delta)$, the unstable manifold $\mathcal{W}^\mathrm{u}(P_0)$ therefore transversely intersects $\mathcal{W}^s(\mathcal{C}_2)$ at a value of $\tilde{u}(\delta, \eps c)=u_*+\mathcal{O}(\delta+\eps +|c-c_\dagger(u_*,s_*)|)$ along a stable fiber of the perturbed trajectory $\ell_2(\tilde{u})$ on $\mathcal{C}_2$. By the exchange lemma~\cite{Schecter.2008}, $\mathcal{W}^\mathrm{u}(P_0)$ aligns exponentially close (in $\eps)$) along the three-dimensional manifold of unstable fibers $\mathcal{W}^\mathrm{u}(\ell_2(\tilde{u}))\subseteq \mathcal{W}^\mathrm{u}(\mathcal{C}_2)$ of the trajectory $\ell_2(\tilde{u})\subseteq \mathcal{C}_2$ upon exiting a small neighborhood of $\mathcal{C}_2$. We note that when $\delta=\eps=0$ and $u=u_*$, the projection of $\ell_2(u_*)$ onto $\mathcal{C}_0$ is orthogonal, and transversely intersects the stable and unstable manifolds $\ell_0^\pm$ \eqref{case(i)_superslow_ell0pm} of \eqref{eq:ODEsystem_case(i)_superslow_C0} on $\mathcal{C}_0$. Thus this also holds for values of $u$ near $u_*$, and all sufficiently small $\delta>0$, $0<\eps<\eps_0(\delta)$. In particular, the projection of $\ell_2(\tilde{u})$ onto $\mathcal{C}_0$ transversely intersects $\ell_0^\pm$ for all sufficiently small $|c-c_\dagger(u_*,s_*)|$ and $\delta>0$, and $0<\eps<\eps_0(\delta)$. 

Using the fact that $\mathcal{W}^\mathrm{s}(P_0)$ transversely intersects $\mathcal{W}^u(\mathcal{C}_2)$ when $\eps=0$ along the intersection of $\mathcal{W}_0^s\left((0,0,0)\right)$ with $\mathcal{W}_0^u \left( (v_2(u_*),0,s_2(u_*))\right)$, and the fact that this intersection breaks transversely upon varying the wave speed $c\approx c_\diamond(u_*,0)$, it remains to adjust
\begin{align}
    c=c_\diamond(u_*,0)+\mathcal{O}(\delta, \eps) = c_\dagger(u_*,s_*)+\mathcal{O}(\delta, \eps)
\end{align}
so that $\mathcal{W}^\mathrm{u}(P_0)$ intersects $\mathcal{W}^\mathrm{s}(P_0)$ upon exiting a neighborhood of $\mathcal{C}_2$ corresponding to a homoclinic orbit to the equilibrium $P_0$ of the full system~\eqref{eq:ODEsystem_case(i)} for all sufficiently small $0<\eps<\eps_0(\delta)$, which is approximated by the singular homoclinic orbit \eqref{case(i)_homoclinic_concatenation_slow_plateau}. 

\end{proof}

\begin{rmk}\label{rmk:case(i)_nocoexistence}
Comparison of the existence conditions of homoclinics with (Theorem \ref{case(i)_thm_plateau}) and without (Theorem \ref{case(i)_thm_noplateau}) superslow plateau reveals that these pulse types cannot coexist. For the homoclinic without superslow plateau, the jump value $s_*$ \eqref{case(i)_solution_sstar} needs to be below the $s$-equilibrium on the branch $\mathcal{E}_1^+$ in order for the flow on $\mathcal{E}_1^+$ to be monotonic. However, $s_*$ is equal to $s_2(u_*=1)$ \eqref{eq:case(i)_s_2_ustar}, and $s_2(u_*)$ can be seen to be strictly increasing in $u_*$. This leads to a contradiction: if a pulse with superslow plateau exists, then $s_2(u_*) < s_*$ (since $u_* < 1$), which prohibits the existence of a pulse without superslow plateau. Vice versa, if a pulse without superslow plateau exists, then $s_* < s_2(u_*)$, which implies $u_* > 1$; this prohibits the existence of a pulse with superslow plateau.
\end{rmk}

\section{Case (ii): superslow $s$} \label{sec:caseii}
Case (ii) also presents a three-tier scaling hierarchy, where $s$ is the slowest component, since
\begin{equation}\label{eq:scaling_case(ii)}
    0 < \delta := \frac{1}{c \mathcal{D}} \ll \eps \ll 1.
\end{equation}
We investigate the geometry and dynamics imposed by this scaling hierarchy. As in case (i) \eqref{eq:scaling_eps_delta_case(i)}, studied in Section \ref{sec:casei}, the scaling hiearchy \eqref{eq:scaling_case(ii)} also allows for the construction of two types of homoclinic pulses; in the language of Section \ref{sec:casei}, both with and without `superslow plateau'. However, the counterpart of the pulse \emph{without} superslow plateau (cf. Theorem \ref{case(i)_thm_noplateau}) would be a pulse solution where the $s$-component vanishes identically. As our aim is to investigate the influence of autotoxicity on travelling pulses in a Klausmeier type model, which implies nonvanishing $s$, we forego analysis of this solution type. For the construction of fronts and double fronts in systems with a similar geometry, we refer to \cite{JDC-BM.2020,Carter.2023}.\\
The remaining pulse type, to be studied in this section, is therefore \emph{with} superslow plateau, see Theorem \ref{case(ii)_thm}. Its singular concatenation \eqref{case(ii)_homoclinic_concatenation} consists of superslow $(s)$, intermediate $(u,p)$ and fast $(v,q)$ orbit segments, see also Figure \ref{fig:case(ii)_full_double_heteroclinic}. This singular construction therefore also (cf. Theorem \ref{case(i)_thm_plateau}) yields two matching conditions, for both $u$ and $s$, determining the height of the $s$-plateau and the width of the $v$-plateau.

\subsection{Superslow dynamics}
The scaling \eqref{eq:scaling_case(ii)} implies
\begin{equation}
    0< \frac{\delta}{\eps} \ll 1.
\end{equation}
Introducing the superslow coordinate $y =\frac{\delta}{\eps} z$, we rewrite \eqref{eq:ODEsystem_modified_rescaled} as
\begin{subequations}\label{eq:ODEsystem_case(ii)}
 \begin{align}
 \delta u_y &= \eps p,\\
 \delta p_y &= \eps\left[u v^2-\mathcal{A}\left(1-u\right) - \eps c p\right],\\
  \delta v_y &= q,\\
  \delta q_y &= \mathcal{B} v  - u v^2(1-k v) + \mathcal{H} v s - c q,\\
 s_y &= - \mathcal{B} v- \mathcal{H} v s + s. 
 \end{align}
\end{subequations}
Note that this is the same system as \eqref{eq:ODEsystem_case(i)}; the only difference is that the asymptotic order of $\delta$ and $\eps$ is reversed. Hence, we first fix $\eps>0$ and take the limit $\delta \downarrow 0$. The resulting reduced superslow system yields four algebraic equations. These define the one-dimensional manifolds
\begin{equation}\label{eq:case(ii)_N0}
    \mathcal{N}_0 := \left\{u=1,p=0,v=0,q=0\right\}
\end{equation}
and
\begin{equation}\label{eq:case(ii)_N1}
    \mathcal{N}_1 := \left\{u v^2 - \mathcal{A}(1-u) = 0,\;p=0,\;\mathcal{B}+\mathcal{H} s - u v(1-k v) = 0,\;q=0\right\}.
\end{equation}
On the line $\mathcal{N}_0$, the $s$-dynamics reduce to
\begin{equation}
    s_y = s.
\end{equation}
The curve $\mathcal{N}_1$ can globally be written as a graph over $v$, with
\begin{equation}\label{eq:case(ii)_N1_graph_us}
    u(v) = \frac{\mathcal{A}}{\mathcal{A}+v^2},\quad s(v) = \frac{1}{\mathcal{H}} \left[-\mathcal{B} + \frac{\mathcal{A} v(1-k v)}{\mathcal{A}+v^2}\right],
\end{equation}
both bounded in $v$. We note that $s(0) = -\frac{\mathcal{B}}{\mathcal{H}}< 0$ and $\lim_{v \to \infty} s(v) = - \frac{\mathcal{A} k + \mathcal{B}}{\mathcal{H}} < 0$; the graph of $s$ over $v$ is positive for $\hat{v}_- < v < \hat{v}_+$, with $\hat{v}_\pm = \frac{1}{2}\frac{\mathcal{A}}{\mathcal{A} k + \mathcal{B}} \left(1 \pm \sqrt{1 - 4 \mathcal{B} \frac{\mathcal{A} k + \mathcal{B}}{\mathcal{A}} }\right)$, which occurs if and only if $0<\mathcal{B}<\frac{1}{4k}$ and $\mathcal{A} > \frac{4 \mathcal{B}^2}{1-4 k\mathcal{B}}$. The graph of $s$ over $v$ has a maximum at $v_\text{max} = \mathcal{A} k \left(-1 + \sqrt{1 + \frac{1}{\mathcal{A} k^2}}\right)$, where $s(v_\text{max}) =: s_\text{max} = \frac{-\mathcal{B} + v_\text{max}/2}{\mathcal{H}}$ and $ u(v_\text{max}) =: u_\text{max} $.\\ Equilibria on $\mathcal{N}_1$ are found by intersection with the nullcline $s = \frac{\mathcal{B} v}{1-\mathcal{H} v}$; there are at most two equilibria in the positive $(v,s)$-quadrant. To effectively describe the $s$-dynamics on $\mathcal{N}_1$, we write
\begin{equation}
 \mathcal{N}_1 = \mathcal{N}_1^+ \cup \mathcal{N}_1^-,
\end{equation}
with the two branches
\begin{subequations}\label{eq:case(ii)_branches_Npm}
\begin{align}
    \mathcal{N}_1^- &:= \left\{(u,p,v,q,s)\in\mathcal{N}_1,\, 0<v<v_\text{max}\right\}\\
    \mathcal{N}_1^+ &:= \left\{(u,p,v,q,s)\in\mathcal{N}_1,\, v_\text{max}<v\right\}
\end{align}
\end{subequations}
meeting at the fold point $\mathcal{O} = \left(u_\text{max},0,v_\text{max},0,s_\text{max}\right)$. We infer that $\mathcal{N}_1^+$ contains at most one equilibrium in the positive $(v,s)$-quadrant, which is unstable in the $s$-dynamics when it exists; this equilibrium lies on $\mathcal{N}_1^+$ if and only if $s_\text{max} > \frac{\mathcal{B} v_\text{max}}{1-\mathcal{H}v_\text{max}}$. This equilibrium moves through the fold point $\mathcal{O}$ to the branch $\mathcal{N}_1^-$ when $s_\text{max} = \frac{\mathcal{B} v_\text{max}}{1-\mathcal{H}v_\text{max}}$. Equating $s=\frac{\mathcal{B} v}{1-\mathcal{H} v}$ with $s(v)$ \eqref{eq:case(ii)_N1_graph_us} and subsequently eliminating $v$ in favour of $s$, we find that the $s$-coordinate of this equilibrium is the largest real solution to the cubic equation
\begin{equation}\label{eq:case(ii)_s_equilibrium_N1}
    \mathcal{H}(1+\mathcal{A}\mathcal{H}^2) s^3 + \left(\mathcal{B}+3 \mathcal{A}\mathcal{B}\mathcal{H}^2 + \mathcal{A}(k - \mathcal{H})\right)s^2 + \mathcal{A} \mathcal{B}(3 \mathcal{B}\mathcal{H}-1)s + \mathcal{A} \mathcal{B}^3 = 0.
\end{equation}
Alternatively, we can eliminate $s$ in favour of $v$, to obtain
\begin{equation}\label{eq:case(ii)_v_equilibrium_N1}
   \mathcal{A} \mathcal{H} k v^3 - \left(\mathcal{B} + \mathcal{A}(\mathcal{H}+k)\right) v^2 + \mathcal{A} v - \mathcal{A} \mathcal{B}= 0.
\end{equation}
As the mapping $s \mapsto s(v) = \frac{\mathcal{B} v}{1-\mathcal{H} v}$ is strictly monotonically increasing, the $v$-coordinate of this equilibrium is the largest real solution to \eqref{eq:case(ii)_v_equilibrium_N1}.\\

For a sketch of $\mathcal{N}_1$ and the superslow dynamics on $\mathcal{N}_1$, see Figure \ref{fig:case(ii)_superslow_s_N1}.\\

\begin{figure}
    \centering
    \includegraphics[width=0.6\textwidth]{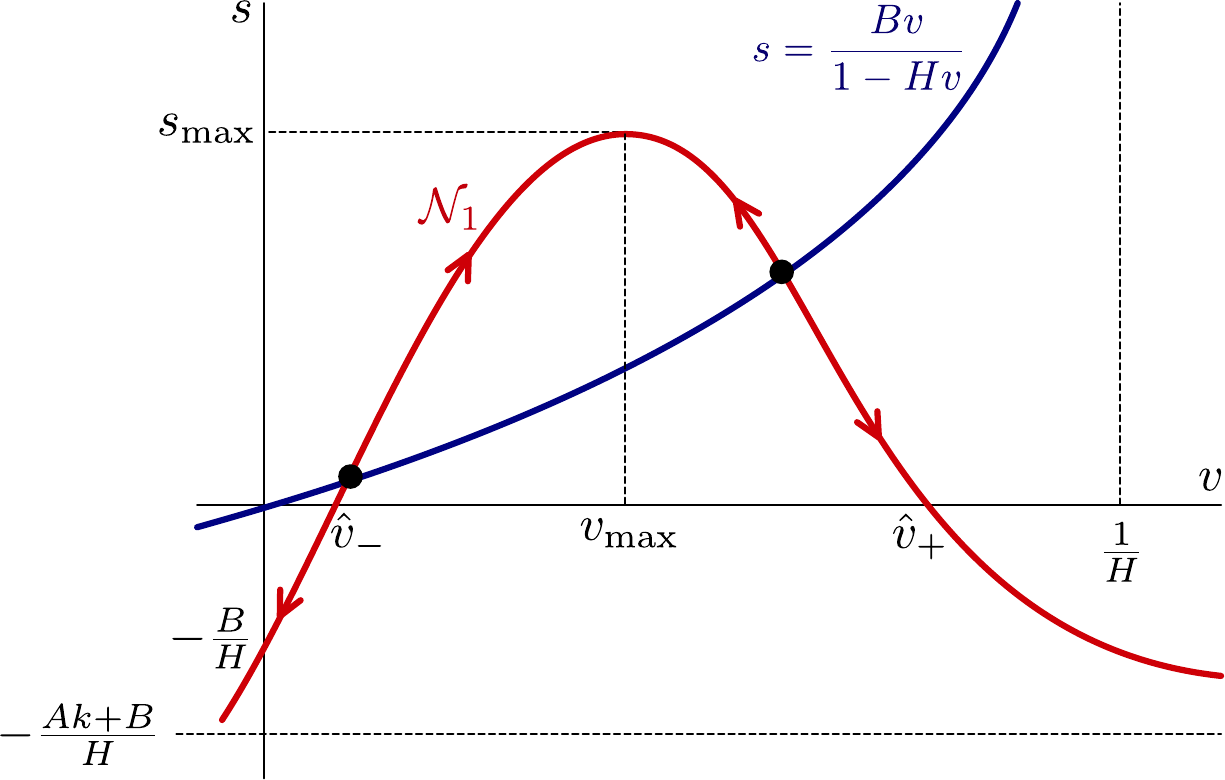}
    \caption{The curve $\mathcal{N}_1$ projected onto the $(v,s)$-plane, with the superslow dynamics on $\mathcal{N}_1$.}
    \label{fig:case(ii)_superslow_s_N1}
\end{figure}

To facilitate the upcoming analysis of the slow and fast timescales, we fix $\eps >0$ and take the limit $\delta \downarrow 0$ in \eqref{eq:ODEsystem_modified_rescaled} to obtain the so-called semi-reduced system 
\begin{subequations}\label{eq:ODEsystem_modified_rescaled_semireduced}
 \begin{align}
  u_z &= p,\\
  p_z &= u v^2-\mathcal{A}\left(1-u\right) - \eps c p,\\
  \eps v_z &= q,\\
  \eps q_z &= \mathcal{B} v  - u v^2(1-k v) + \mathcal{H} v s - c q,\\
  s &= s_0.
 \end{align}
\end{subequations}
The semi-reduced system will be studied for $0<\eps \ll 1$ in the upcoming subsections. For future reference, we define
\begin{equation}\label{eq:case(ii)_N1_proj}
    \Pi_{s_0} \mathcal{N}_1 := \left\{u v^2 - \mathcal{A}(1-u) = 0,\;p=0,\;q=0,\;s=s_0\right\}
\end{equation}
to be the projection of the curve $\mathcal{N}_1$ \eqref{eq:case(ii)_N1} onto the $s=s_0$ hyperplane, which is the invariant subspace on which the dynamics of \eqref{eq:ODEsystem_modified_rescaled_semireduced} take place.

\subsection{Slow dynamics}
The reduced slow system, obtained by taking $\eps \to 0$ in \eqref{eq:ODEsystem_modified_rescaled_semireduced}, is given by
\begin{equation}
    u_{zz} = u v^2-\mathcal{A}\left(1-u\right),
\end{equation}
and is restricted to the two-dimensional manifolds
\begin{equation}\label{eq:case(ii)_M0}
    \mathcal{M}_0 := \left\{ v=0,\; q=0,\; s=s_0\right\}
\end{equation}
and
\begin{equation}\label{eq:case(ii)_M1}
    \mathcal{M}_1 := \left\{ \mathcal{B}+\mathcal{H} s - u v(1-k v) = 0,\; q=0,\; s=s_0\right\}.
\end{equation}
Solving the equation $\mathcal{B}+\mathcal{H} s - u v(1-k v) = 0$ for $v$ yields
\begin{equation}\label{eq:case(ii)_vpm}
    v_\pm(u) = \frac{1}{2k}\left(1 \pm \sqrt{1-\frac{4 k\left(\mathcal{B} + \mathcal{H} s_0\right)}{u}}\right).
\end{equation}
Note that $\mathcal{M}_1$ is folded along the curve $\left\{u= 4 k(\mathcal{B} + \mathcal{H} s_0),\; v = \frac{1}{2k},q=0,s=s_0\right\} \subset \mathcal{M}_1$. For future reference, we define the upper and lower branches 
\begin{subequations}\label{eq:case(ii)_M1_branches}
\begin{align}
\mathcal{M}_1^+ &:= \left\{(u,p,v,q,s)\in \mathcal{M}_1\,:\, v>\frac{1}{2k}\right\}\\
\mathcal{M}_1^- &:= \left\{(u,p,v,q,s)\in \mathcal{M}_1\,:\, v<\frac{1}{2k}\right\}.
\end{align}
\end{subequations}
The critical manifold $\mathcal{M}$ of \eqref{eq:ODEsystem_modified_rescaled_semireduced} is given by the union
\begin{equation}\label{eq:case(ii)_M}
    \mathcal{M} := \mathcal{M}_0 \cup \mathcal{M}_1^+ \cup \mathcal{M}_1^-.
\end{equation}
The line $\mathcal{N}_0$ \eqref{eq:case(ii)_N0} intersects the two-dimensional hyperplane $\mathcal{M}_0$ at the equilibrium of the slow system on $\mathcal{M}_0$,
\begin{equation}\label{Eq:case(ii)_M0_dynamics}
    u_{zz} = -\mathcal{A}(1-u);
\end{equation}
see also Figure \ref{fig:case(ii)_M0_N0_ups}. Note that both $\mathcal{M}_0, \mathcal{N}_0 \subset \left\{ v=0,\;q=0\right\}$; inside this three-dimensional hyperplane, the intersection of $\mathcal{M}_0$ and $\mathcal{N}_0$ is transversal. System \eqref{Eq:case(ii)_M0_dynamics} has one saddle equilibrium at $u=1$, $p=0$; the stable and unstable manifolds of this equilibrium are given by the lines
\begin{equation}
     \ell_0^s = \left\{p = -\sqrt{\mathcal{A}}(u-1)\right\},\quad \ell_0^u = \left\{p = \sqrt{\mathcal{A}}(u-1)\right\}.
 \end{equation}

\begin{figure}
    \centering
    \includegraphics[width=0.5\textwidth]{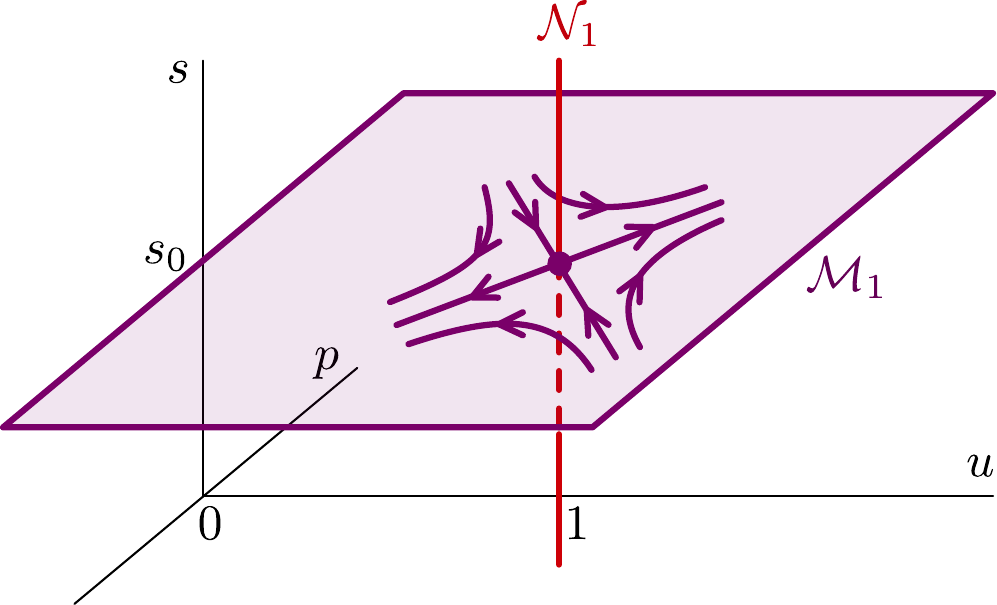}
    \caption{The manifold $\mathcal{M}_0$ \eqref{eq:case(ii)_M0} and the line $\mathcal{N}_0$ \eqref{eq:case(ii)_N1} in $(u,p,s)$-space. The linear dynamics \eqref{Eq:case(ii)_M0_dynamics} on $\mathcal{M}_0$, with a saddle equilibrium at $(1,0,s_0) \in \mathcal{M}_0$, are indicated in purple.}
    \label{fig:case(ii)_M0_N0_ups}
\end{figure}

Equivalently, the curve $\Pi_{s_0} \mathcal{N}_1$ \eqref{eq:case(ii)_N1_proj} intersects the two-dimensional manifold $\mathcal{M}_1$ at the equilibria of the slow system on $\mathcal{M}_1$,
\begin{equation}\label{eq:case(ii)_M1_dynamics}
    u_{zz} = u\, v_{\pm}(u)^2 - \mathcal{A}(1-u),
\end{equation}
with $v_\pm$ \eqref{eq:case(ii)_vpm} corresponding to the upper and lower branches of $\mathcal{M}_1$ \eqref{eq:case(ii)_M1_branches}; see also Figure \ref{fig:case(ii)_M1_N1_upv}. As the definition of $v_\pm$ is the same as in Section \ref{ssec:case(i)_slowdynamics}, equation \eqref{eq:case(i)_vpm}, we use the same notation; note that in \eqref{eq:case(i)_vpm}, $u=u_0$ is fixed and $s$ varies, whereas in the current setting, $s=s_0$ is fixed and $u$ varies.\\

For future reference, we define the $u$-coordinate of the intersection of $\Pi_{s_0} \mathcal{N}_1$ and $\mathcal{M}_1^+$ as $u_1^+$, with
\begin{equation}\label{eq:case(ii)_equilibrium_u1+}
 u_1^+:= \frac{1 + 2 k(\mathcal{B} + \mathcal{H} s_0 + \mathcal{A} k) - \sqrt{1-4\frac{\mathcal{B} + \mathcal{H} s_0}{\mathcal{A}}\left(\mathcal{B} + \mathcal{H} s_0 + \mathcal{A} k\right)}}{2\left(1+\mathcal{A} k^2\right)},
\end{equation}
so that
\begin{equation}
    \Pi_{s_0}\mathcal{N}_1 \cap \mathcal{M}_1^+ = \left\{ u=u_1^+,\,p=0,\,v=v^+(u_1^+),\,q=0,\,s=s_0\right\},
\end{equation}
provided that the intersection is nonempty, which is true whenever $\mathcal{A} > \frac{1}{k}\frac{\mathcal{B}+\mathcal{H} s_0}{1-4 k \left(\mathcal{B}+\mathcal{H} s_0\right)}$. Note that $u_1^+ < 1$ because the graph of $u$ over $v$ \eqref{eq:case(ii)_N1_graph_us} is strictly monotone; see again Figure \ref{fig:case(ii)_M1_N1_upv}. \\

From \eqref{eq:case(ii)_equilibrium_u1+}, we infer that $\Pi_{s_0} \mathcal{N}_1$ intersects $\mathcal{M}_1$ if and only if
\begin{equation}
    0<4 k \left(\mathcal{B}+\mathcal{H} s_0\right) < 1\quad\text{and}\quad \mathcal{A} > \frac{4\left(\mathcal{B}+\mathcal{H} s_0\right)^2}{1-4 k \left(\mathcal{B}+\mathcal{H} s_0\right)}.
\end{equation}
Clearly, $\Pi_{s_0} \mathcal{N}_1$ intersects $\mathcal{M}_1$ at most twice, and at most one intersection lies on the branch $\mathcal{M}_1^+$. Both intersections lie on the branch $\mathcal{M}_1^-$ if and only if $\mathcal{A} < \frac{1}{k}\frac{\mathcal{B}+\mathcal{H} s_0}{1-4 k \left(\mathcal{B}+\mathcal{H} s_0\right)}$. If $\Pi_{s_0} \mathcal{N}_1$ intersects each branch exactly once, as depicted in Figure \ref{fig:case(ii)_M1_N1_upv}, then the associated equilibria of \eqref{eq:case(ii)_M1_dynamics} are saddles; if $\Pi_{s_0} \mathcal{N}_1$ intersects the lower branch twice, then the equilibrium furthest away from the fold line is a saddle, and the one closest to the fold line therefore a center. This follows directly from the observation that $u\, v_{\pm}(u)^2 - \mathcal{A}(1-u)$ is strictly positive for sufficiently large $u$; hence, at the rightmost intersection of the graph of $u\, v_{\pm}(u)^2 - \mathcal{A}(1-u)$ with the horizontal axis the graph is increasing, which is equivalent to the associated (rightmost) equilibrium of \eqref{eq:case(ii)_M1_dynamics} being a saddle.
Note that both $\mathcal{M}_1, \Pi_{s_0} \mathcal{N}_1 \subset \left\{q=0,\;s=s_0\right\}$; inside this three-dimensional hyperplane, the intersection of $\Pi_{s_0}$ and $\mathcal{M}_1$ is generically transversal.\\

The associated saddle equilibrium $(u_1^+,0)$ of the flow on $\mathcal{M}_1^+$ \eqref{eq:case(ii)_M1_dynamics} has stable and unstable manifolds given by
\begin{equation}
    \ell_1^s = \left\{p = -\text{sgn}(u-u_1^+)\sqrt{2 V^+(u_1^+)-2 V^+(u)}\right\},\quad \ell_1^u = \left\{p = \text{sgn}(u-u_1^+)\sqrt{2 V^+(u_1^+)-2 V^+(u)}\right\},
\end{equation}
with
\begin{align}
    V^+(u) &= \left(\mathcal{A} + \frac{\mathcal{B}+\mathcal{H} s_0}{k}\right)u - \left(\mathcal{A}+\frac{1}{2k^2}\right)\frac{u^2}{2} - \frac{1}{4k^2} \left(u-2 k\left(\mathcal{B}+\mathcal{H} s_0\right)\right) \sqrt{u \left(u-4 k \left(\mathcal{B}+\mathcal{H} s_0\right)\right)}\nonumber\\
    &\quad + 2 \left(\mathcal{B}+\mathcal{H} s_0\right)^2 \log \left[\!\sqrt{u} + \sqrt{u - 4 k \left(\mathcal{B}+\mathcal{H} s_0\right)}\right] \label{eq:case(ii)_V+u}
\end{align}
Since $\left(V^+\right)'(u) = 0$ if and only if $u = u_1^+$ and $V^+$ is concave, the graph of the Hamiltonian of \eqref{eq:case(ii)_M1_dynamics}, given by
\begin{equation}
    H_1^+(u,p) = \frac{1}{2} p^2 + V^+(u) - V^+(u_1^+),
\end{equation}
can indeed be shown to be of saddle type.

\begin{figure}
    \centering
    \includegraphics[width=0.5\textwidth]{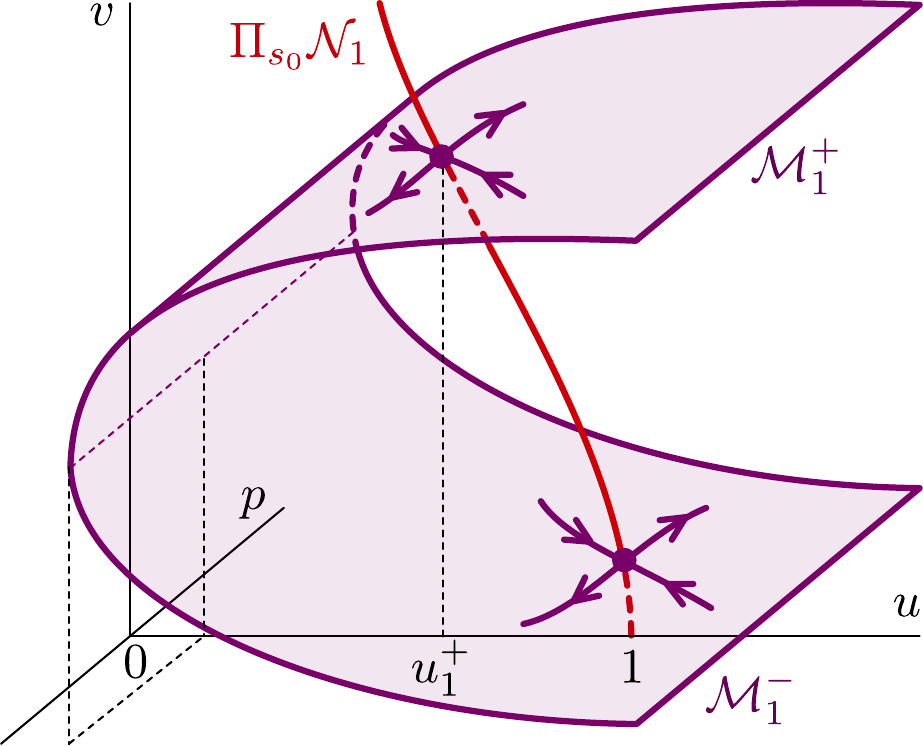}
    \caption{The manifold $\mathcal{M}_1$ \eqref{eq:case(ii)_M0} and the curve $\Pi_{s_0}\mathcal{N}_1$ \eqref{eq:case(ii)_N1} in $(u,p,v)$-space. Depending on the parameter values, the curve $\Pi_{s_0}\mathcal{N}_1$ may intersect $\mathcal{M}_1$ at several places; the equilibria of the flow \eqref{eq:case(ii)_M1_dynamics} on $\mathcal{M}_1$ are located at these intersections. When $\Pi_{s_0}\mathcal{N}_1$ intersects both branches of  $\mathcal{M}_1$, as illustrated here, the associated equilibria are saddles.}
    \label{fig:case(ii)_M1_N1_upv}
\end{figure}


\subsection{Fast dynamics}\label{sss:case(ii)_fastdynamics}

The reduced fast system is given by (cf. \eqref{eq:ODEsystem_modified_fast_rescaled})
\begin{subequations}\label{eq:ODEsystem_case(ii)_fast}
 \begin{align}
  v_\zeta &= q,\\
  q_\zeta &= \mathcal{B} v  - u v^2(1-k v) + \mathcal{H} v s - c q,
 \end{align}
\end{subequations}
with $u=u_0$, $p=p_0$ and $s=s_0$. System \eqref{eq:ODEsystem_case(ii)_fast} is equal to system \eqref{eq:ODEsystem_case(i)_layer}, which was analysed in Section \ref{sec:case(i)_fast}.\\ 
From Lemma \ref{lem:case(i)_layer_transversality}, it follows that heteroclinic orbits exist for (specific, unique) $\mathcal{O}(1)$ values of $c$, as long as \eqref{eq:ODEsystem_case(ii)_fast} has three distinct equilibria -- that is, when the line $\left\{u=u_0,\;p=p_0,\;s=s_0,\;q=0\right\}$ transversally intersects $\mathcal{M}_1$, necessarily implying that this line intersects both branches $\mathcal{M}_1^\pm$. This situation occurs if and only if $(u_0,p_0,s_0)$ is contained in the projection of $\mathcal{M}_1$ onto the $(u,p,s)$-hyperplane, which implies 
\begin{equation}
 u_0 > 4 k \left(\mathcal{B}+\mathcal{H} s_0\right).   
\end{equation}
To facilitate presentation, we introduce the parameter
\begin{equation}
    0 < \alpha := \frac{4 k}{u_0} \left(\mathcal{B}+\mathcal{H} s_0\right) < 1,
\end{equation}
so that $v^\pm(u_0) = \frac{1}{2k} \left(1 \pm \sqrt{1-\alpha}\right)$ \eqref{eq:case(ii)_vpm}. The heteroclinic orbits are explicitly given by $\phi_\dagger(\zeta;u_0,s_0)$ and $\phi_\diamond(\zeta;u_0,s_0)$ \eqref{eq:ODEsystem_case(i)_heteroclinics}, with $v$-profiles $v_\dagger(\zeta)$ and $v_\diamond(\zeta)$ \eqref{eq:ODEsystem_case(i)_heteroclinics_profiles}, and associated $c$-values.
\begin{equation}\label{eq:case(ii)_wavespeeds_heteroclinics_large_c}
c_\dagger(u_0,s_0) = - c_\diamond(u_0,s_0) = \sqrt{\frac{u_0}{8 k}}\left(1 - 3 \sqrt{1-\alpha}\right) = \sqrt{\frac{\mathcal{B} + \mathcal{H} s_0}{2} }\,\frac{1 - 3 \sqrt{1-\alpha}}{\sqrt{\alpha}},
\end{equation}
cf. \eqref{eq:ODEsystem_case(i)_heteroclinics_wavespeeds}. In particular, we observe that $\lim_{\alpha \uparrow 1} c_\dagger = \sqrt{\frac{\mathcal{B} + \mathcal{H} s_0}{2} }$, and $\lim_{\alpha \downarrow 0} c_\dagger = -\infty$; moreover, $c_\dagger$ is monotone in $\alpha$. Hence, for a fixed $\mathcal{O}(1)$ value of $0 < c < \sqrt{\frac{\mathcal{B} + \mathcal{H} s_0}{2} }$, there exists a pair of values $(\alpha_\dagger,\alpha_\diamond)$, with $\frac{16}{25} < \alpha_\diamond < \frac{8}{9} < \alpha_\dagger < 1$, such that there exists a family of heteroclinic connections from $\mathcal{M}_0$ to $\mathcal{M}_1^+$ that lie in the hyperplane $\left\{u = u_0^+,\;s = s_0\right\}$, and a family of heteroclinic connections from $\mathcal{M}_1^+$ to $\mathcal{M}_0$ that lie in the hyperplane $\left\{u = u_0^-,\;s = s_0\right\}$, with $u_0^+ < \frac{9}{2} k \left(\mathcal{B}+\mathcal{H} s_0\right) < u_0^-$; see also Figure \ref{fig:case(ii)_heteroclinic_connections_large_c}. Using \eqref{eq:case(ii)_wavespeeds_heteroclinics_large_c}, we can calculate
\begin{equation}\label{eq:case(ii)_u0pm}
    u_0^\pm = \frac{k}{4}\left(5 c^2 + 18 \left(\mathcal{B}+\mathcal{H} s_0\right) \mp 3 c \sqrt{c^2 + 4 \left(\mathcal{B}+\mathcal{H} s_0\right)}\right).
\end{equation}
For $- \sqrt{\frac{\mathcal{B} + \mathcal{H} s_0}{2} } < c < 0$, the same statement holds, with the direction of the heteroclinic connections reversed.

\begin{figure}
    \centering
    \includegraphics[width=0.6\textwidth]{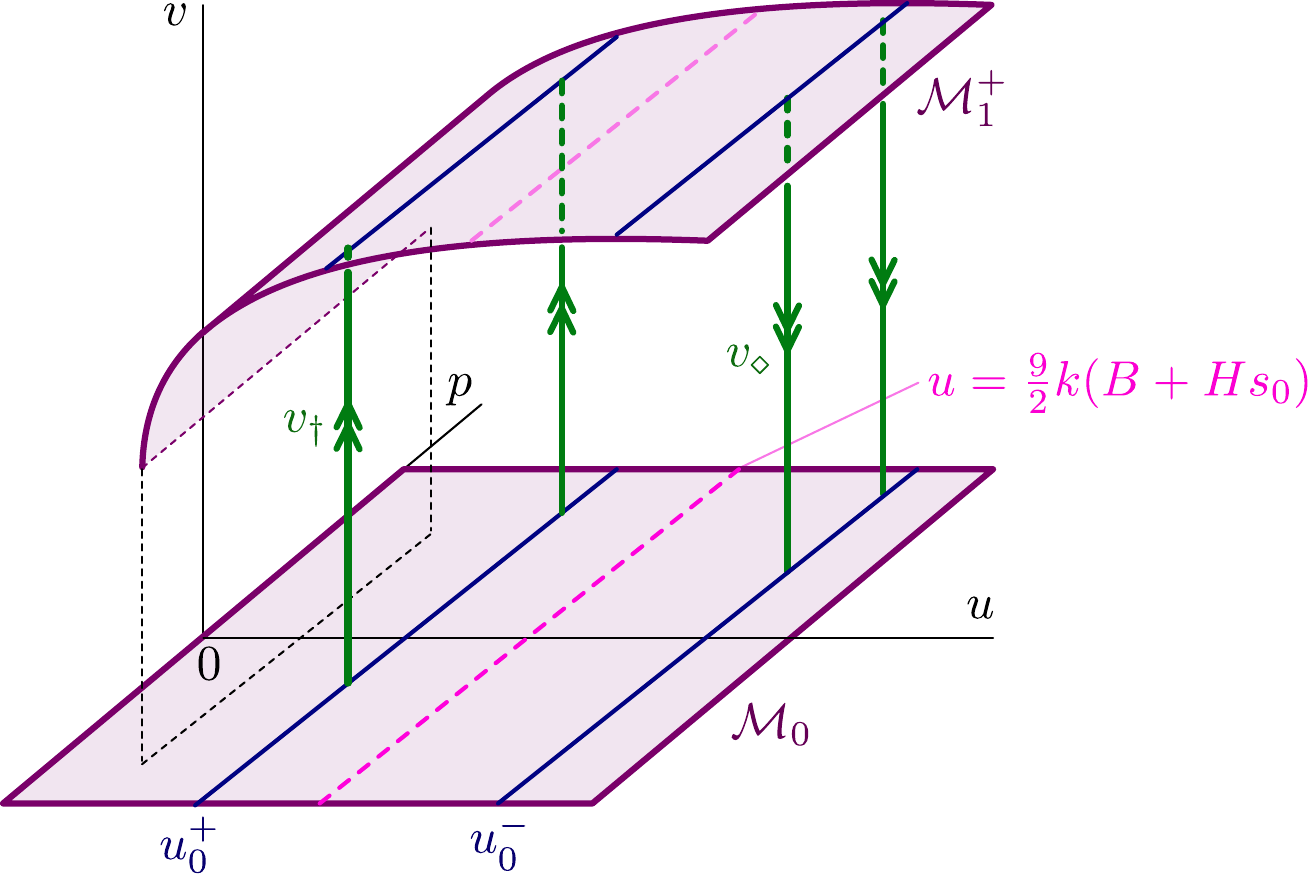}
    \caption{Heteroclinic connections $v_\dagger(\zeta)$ and $v_\diamond(\zeta)$ \eqref{eq:ODEsystem_case(i)_heteroclinics_profiles} between $\mathcal{M}_0$ and $\mathcal{M}_1^+$ for a fixed, positive, $\mathcal{O}(1)$ value of $c$. The direction of the heteroclinic connections is determined by the position of their $u$-coordinate relative to the line $u = \frac{9}{2} k \left(\mathcal{B} + \mathcal{H} s_0\right)$.}
    \label{fig:case(ii)_heteroclinic_connections_large_c}
\end{figure}

\subsection{Singular profile and existence}
We use the analysis from the previous sections to construct a singular pulse profile for system \eqref{eq:ODEsystem_modified_fast_rescaled} in the scaling regime \eqref{eq:scaling_case(ii)}. We first describe the construction of this singular profile, and then state and prove the associated existence theorem, Theorem \ref{case(ii)_thm}. \\

Our first goal is to construct a heteroclinic solution in the semireduced system \eqref{eq:ODEsystem_modified_rescaled_semireduced} between the equilibrium $(u=1,p=0,v=0,q=0,s=s_0) \in \mathcal{M}_0$ and the equilibrium $(u=u_1^+,p=0,v=v^+(u_1^+),q=0,s=s_0) \in \mathcal{M}_1^+$ \eqref{eq:case(ii)_equilibrium_u1+}. This can be done in two ways: we can either connect $\ell^u_0$ to $\ell^s_1$ via $\phi_\dagger$, or connect $\ell^u_1$ to $\ell^s_0$ via $\phi_\diamond$. From the geometry of the stable and unstable manifolds $\ell^{u,s}_0$ and $\ell^{u,s}_1$, it is clear that these constructions are possible for all values of $u_1^+$; see also Figure \ref{fig:case(ii)_heteroclinic_semireduced}. The reversibility symmetry of both \eqref{Eq:case(ii)_M0_dynamics} and \eqref{eq:case(ii)_M1_dynamics}, the unstable and stable manifolds of each saddle equilibrium are mapped onto each other by reflection in the $u$-axis. Therefore, the $u$-coordinate of the upwards jump via $\phi_\dagger$ is equal to the $u$-coordinate of the downwards jump via $\phi_\diamond$. This value of $u := u_*$ is the solution to the equation
\begin{equation}\label{eq:case(ii)_ustar_equation}
    \sqrt{\mathcal{A}} (u_*-1) = -\text{sgn}(u_*-u_1^+)\sqrt{2 V^+(u_1^+)-2 V^+(u_*)},
\end{equation}
cf. \eqref{eq:case(ii)_V+u}. 
From the analysis in \ref{sss:case(ii)_fastdynamics}, it follows that, for fixed $c$, only \emph{one} of the two connections can exist, cf. Figure \ref{fig:case(ii)_heteroclinic_connections_large_c}. The sign of $u_* - \frac{9}{2} k \left(\mathcal{B} + \mathcal{H} s_0\right)$ determines whether the upwards connection or the downwards connection exists. For the persistence of the heteroclinic orbits between the two saddle points in the semi-reduced system \eqref{eq:ODEsystem_modified_rescaled_semireduced} -- that is, for fixed $s=s_0$ -- the approach of \cite[section 3]{BCD.2019} can be used. In fact, the persistence argument from the proof of \cite[Theorem 3.4]{JDC-BM.2020} can be applied directly in this case.

\begin{figure}
    \centering
    \includegraphics[width=0.5\textwidth]{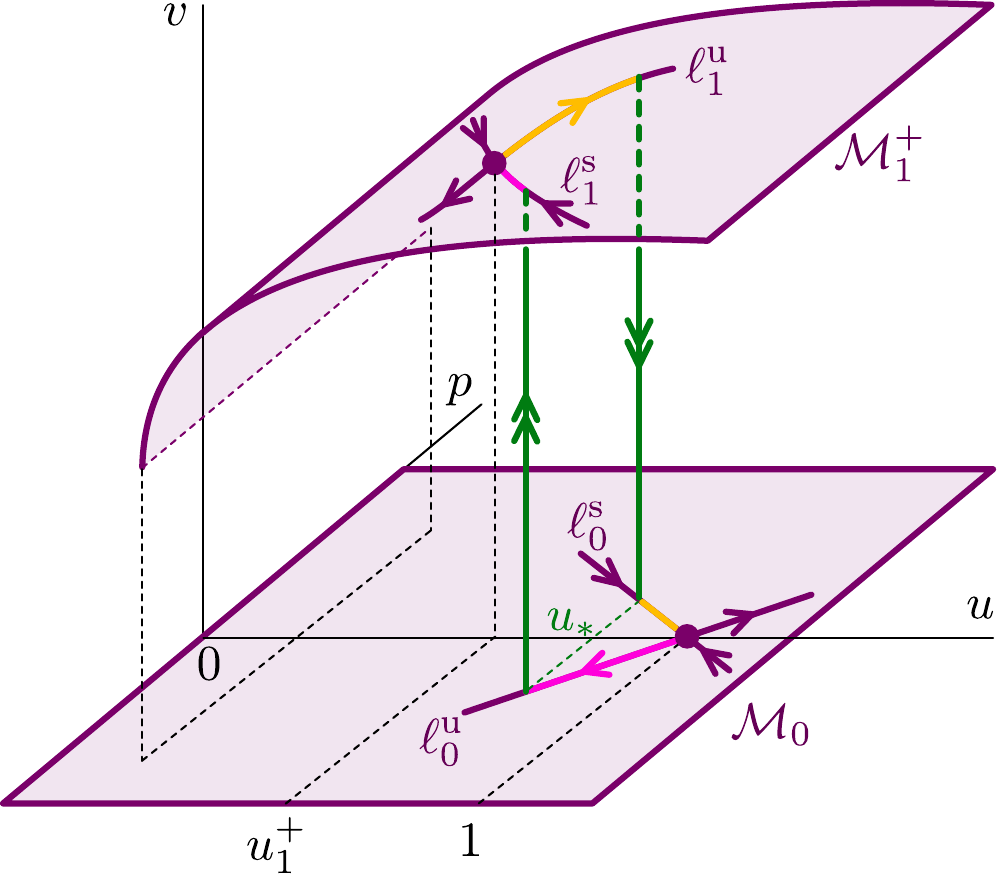}
    \caption{Heteroclinic connections between the equilibrium on $\mathcal{M}_0$ and the equilibrium on $\mathcal{M}_1^+$, for fixed $s=s_0$. These connections persist in the semireduced system \eqref{eq:ODEsystem_modified_rescaled_semireduced}.}
    \label{fig:case(ii)_heteroclinic_semireduced}
\end{figure}

Our ultimate goal is to construct a homoclinic orbit to the trivial equilibrium $(u=1,p=0,v=0,q=0,s=0)$ of the full system \eqref{eq:ODEsystem_modified_rescaled}. To that end, we use the heteroclinic orbits between the saddle equilibria in the semi-reduced system \eqref{eq:ODEsystem_modified_rescaled_semireduced}, which were constructed for fixed $s=s_0$, to jump between $\mathcal{N}_0$ and $\mathcal{N}_1$. The aforementioned global equilibrium lies on $\mathcal{N}_0$, on which the superslow $s$-flow is in the positive $s$-direction, for $s>0$. On $\mathcal{N}_1$, the superslow $s$-flow can have different directions depending on the position of the equilibria of the superslow flow on $\mathcal{N}_1$, cf. Figure \ref{fig:case(ii)_superslow_s_N1}; in particular, there exist configurations of these equilibria such that the flow on $\mathcal{N}_1$ is decreasing. Based on these observations, we propose the following construction, consisting of four segments:
\begin{enumerate}
    \item Start at $(1,0,0,0,0) \in \mathcal{N}_0$, and follow the superslow flow on $\mathcal{N}_0$ upwards to $(1,0,0,0,s_0)$.
    \item Use an \emph{upwards} heteroclinic orbit in the semi-reduced system \eqref{eq:ODEsystem_modified_rescaled_semireduced} to flow from $(1,0,0,0,s_0) \in \mathcal{M}_0$ to $(\left.u_1^+\right|_{s=s_0},0,\left.v^+(u_1^+)\right|_{s=s_0},0,s_0) \in \mathcal{M}_1^+$. Note that, by definition, $(\left.u_1^+\right|_{s=s_0},0,\left.v^+(u_1^+)\right|_{s=s_0},0,s_0) \in \mathcal{N}_1$, as long as $s_0 < s_\text{max}$, cf. Figure \ref{fig:case(ii)_superslow_s_N1}.
    \item Follow the superslow flow on $\mathcal{N}_1$ downwards to $(\left.u_1^+\right|_{s=0},0,\left.v^+(u_1^+)\right|_{s=0},0,0)$. Note that the starting point $(\left.u_1^+\right|_{s=s_0},0,\left.v^+(u_1^+)\right|_{s=s_0},0,s_0)$ must be below any equilibria on $\mathcal{N}_1$, see Figure \ref{fig:case(ii)_superslow_s_N1}.
    \item Use a \emph{downwards} heteroclinic orbit in the semi-reduced system \eqref{eq:ODEsystem_modified_rescaled_semireduced} with $s_0 = 0$, to flow from\\ $(\left.u_1^+\right|_{s=0},0,\left.v^+(u_1^+)\right|_{s=0},0,0) \in \left.\mathcal{M}_1^+\right|_{s=0}$ to $(1,0,0,0,0) \in \left.\mathcal{M}_0\right|_{s=0}$.
\end{enumerate}
This concatenation can be symbolically represented as
\begin{equation}\label{case(ii)_homoclinic_concatenation}
    \mathcal{N}_0[0,s_0] \cup \ell_0^u \cup \phi_\dagger(\cdot;u_*,s_0) \cup \ell_1^s|_{s=s_0} \cup \mathcal{N}_1[0,s_0] \cup \ell_1^u|_{s=0} \cup \phi_\diamond(\cdot;u_1^+|_{s=0},0) \cup \ell_0^s|_{s=0}.
\end{equation}
For a sketch, see Figure \ref{fig:case(ii)_full_double_heteroclinic}. This concatenated orbit can be constructed, provided that the following two matching conditions are met:
\begin{equation}\label{case(ii)_matchingconditions}
     \left. u_0^+ \right|_{s=s_0} = \left. u_* \right|_{s=s_0},\quad \left. u_0^- \right|_{s=0} = \left. u_* \right|_{s=0},
 \end{equation}
cf. \eqref{eq:case(ii)_u0pm} and \eqref{eq:case(ii)_ustar_equation}. These conditions fix $s_0$ and $c$ for given parameter values $\mathcal{A}$, $\mathcal{B}$, $\mathcal{H}$ and $k$.\\ 

\begin{figure}
    \centering
    \includegraphics[width=0.6\textwidth]{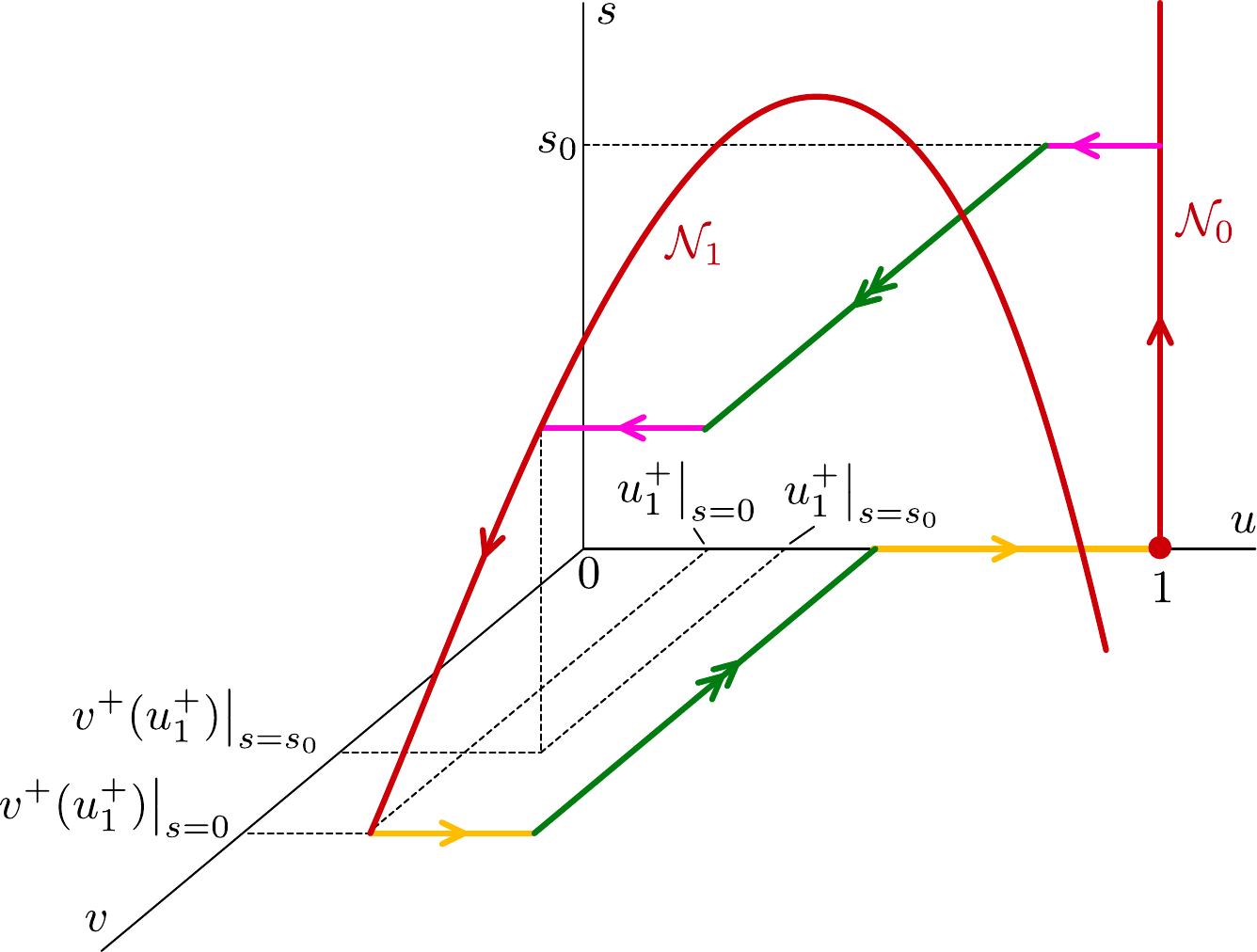}
    \caption{The skeleton construction of a homoclinic orbit to the trivial equilibrium, projected onto the $(u,v,s)$-hyperplane. The heteroclinic connections between $\mathcal{N}_0$ and $\mathcal{N}_1$ are those indicated in Figure \ref{fig:case(ii)_heteroclinic_semireduced}, with corresponding colours. For the (projected) superslow dynamics on $\mathcal{N}_1$, see Figure \ref{fig:case(ii)_superslow_s_N1}.}
    \label{fig:case(ii)_full_double_heteroclinic}
\end{figure}

The persistence of the singular construction presented in this section is stated in the following theorem, which establishes the existence of a homoclinic orbit in the the full system \eqref{eq:ODEsystem_modified_rescaled}/\eqref{eq:ODEsystem_modified_fast_rescaled} for asymptotically small values of $\eps$ and $\delta$, in the relative scaling \eqref{eq:scaling_case(ii)}. The arguments used in the proof are analogous to those used in the proof of Theorem \ref{case(i)_thm_plateau}. However, in this case, the role of $\eps$ and $\delta$ is reversed; hence, the `key players' of the proof --the critical manifolds in the various (semi-) reduced systems-- are different, yielding a qualitatively different phase space geometry.

\begin{thm}\label{case(ii)_thm} Let $s_0>0$ and $c>0$ be such that equations \eqref{case(ii)_matchingconditions} are satisfied.
Assume that
\begin{equation}
    4 k \mathcal{B} < 1,\quad \mathcal{A} > \frac{4 \mathcal{B}^2}{1-4 k \mathcal{B}},\quad\text{and}\quad \mathcal{A}>\frac{1}{k} \frac{\mathcal{B}+\mathcal{H} s_0}{1-4 k \left(\mathcal{B}+\mathcal{H} s_0\right)},
\end{equation}
and that
\begin{equation}
    \left.\frac{\mathcal{B} v^+(u_1^+)}{1-\mathcal{H} v^+(u_1^+)}\right|_{s=s_0} > s_0 > 0,
\end{equation}
where $u_1^+$ is given by \eqref{eq:case(ii)_equilibrium_u1+} and $v^+(u)$ by \eqref{eq:case(ii)_vpm}.\\
There exists a $\eps_0 > 0$ such that for all $0<\eps<\eps_0$, there exists a $\delta_0(\eps)$ such that for all $0<\delta<\delta_0(\eps)$, there exists a solution $\left(u_{h3},p_{h3},v_{h3},q_{h3},s_{h3}\right)$ to system \eqref{eq:ODEsystem_modified_fast_rescaled} that is homoclinic to the equilibrium $(1,0,0,0,0)$. To leading order in $\eps$ and $\delta$, this solution is given by the homoclinic concatenation \eqref{case(ii)_homoclinic_concatenation}, and its speed is given by
\begin{align}
    c =  c_\diamond(u_*(0),0)+\mathcal{O}(|\eps|+|\delta|),
\end{align}
where $u_*(s)$ is the solution to \eqref{eq:case(ii)_ustar_equation}.
\end{thm}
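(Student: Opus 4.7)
The plan is to mirror the strategy used for Theorem \ref{case(i)_thm_plateau}, but to reverse the order in which the two singular limits are taken, in accordance with the scaling hierarchy \eqref{eq:scaling_case(ii)}. Concretely, I would first fix $\eps > 0$ and build a singular heteroclinic skeleton inside the $\delta$-independent semi-reduced system \eqref{eq:ODEsystem_modified_rescaled_semireduced}, and then lift that skeleton into the full five-dimensional flow of \eqref{eq:ODEsystem_case(ii)} by tracking the persistent superslow critical manifolds $\mathcal{N}_0$ and $\mathcal{N}_1$ as $\delta \to 0$.

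At the first stage, for fixed $s = s_0$, Fenichel theory provides an $\eps_0 > 0$ such that the two-dimensional normally hyperbolic manifolds $\mathcal{M}_0$ and $\mathcal{M}_1^+$ of \eqref{eq:ODEsystem_modified_rescaled_semireduced} persist, together with the saddle equilibria at $(1,0,0,0,s_0) \in \mathcal{M}_0$ and $(u_1^+, 0, v^+(u_1^+), 0, s_0) \in \mathcal{M}_1^+$ and their one-dimensional stable/unstable manifolds $\ell_0^{u,s}$ and $\ell_1^{u,s}$. The transverse fast heteroclinics $\phi_\dagger, \phi_\diamond$ produced in Section \ref{sec:case(i)_fast}---whose breaking with respect to $(u, s, c)$ is quantified by Lemma \ref{lem:case(i)_layer_transversality}---then allow me to concatenate $\ell_0^u$ with the appropriate fast jump (determined by the sign condition from \eqref{eq:case(ii)_u0pm}) and with $\ell_1^s$, yielding a singular heteroclinic between the two saddles whenever the matching condition \eqref{eq:case(ii)_ustar_equation} is satisfied. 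Persistence of this heteroclinic for $0 < \eps \ll 1$, at the cost of an $\mathcal{O}(\eps)$ correction of the wave speed, would follow essentially verbatim from the argument in \cite[Theorem 3.4]{JDC-BM.2020} (see also \cite[Section 3]{BCD.2019}); a fully symmetric construction at $s = 0$ yields the downward semi-reduced heteroclinic.

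At the second stage, I would fix $0 < \eps < \eps_0$ and view \eqref{eq:ODEsystem_case(ii)} as a slow-fast system in $\delta$. The one-dimensional manifold $\mathcal{N}_0$ and the normally hyperbolic portion of $\mathcal{N}_1$ on which the plateau arc $\mathcal{N}_1[0, s_0]$ lies---bounded away from the fold $\mathcal{O}$ and from superslow equilibria by the hypothesis on $\frac{\mathcal{B} v^+(u_1^+)}{1-\mathcal{H} v^+(u_1^+)}$---persist for all $0 < \delta < \delta_0(\eps)$, together with their stable and unstable foliations. The equilibrium $P_0 = (1,0,0,0,0)$ lies on $\mathcal{N}_0$; in the $\delta \to 0$ limit, $\mathcal{W}^\mathrm{u}(P_0)$ coincides with the union of the two-dimensional semi-reduced unstable manifolds of the saddle on $\mathcal{M}_0$ as $s$ varies along $\mathcal{N}_0[0,s_0]$, and analogously for $\mathcal{W}^\mathrm{s}(P_0)$. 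I would then track $\mathcal{W}^\mathrm{u}(P_0)$ forward through the upward semi-reduced heteroclinic $\ell_0^u \cup \phi_\dagger \cup \ell_1^s|_{s=s_0}$---transverse in $(u, c)$ by the previous stage---so that it enters a neighbourhood of $\mathcal{N}_1$ transversely to the stable foliation. The exchange lemma \cite{Schecter.2008} then forces $\mathcal{W}^\mathrm{u}(P_0)$ to exit the neighbourhood near $s = 0$ exponentially close (in $\delta$) to $\mathcal{W}^\mathrm{u}(\mathcal{N}_1)$. The subsequent downward semi-reduced heteroclinic at $s = 0$ and the return along $\mathcal{N}_0$ then close the orbit, provided the two matching conditions \eqref{case(ii)_matchingconditions} can be simultaneously solved for $(s_0, c)$ to leading order.

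The hard part is exactly this simultaneous solvability of \eqref{case(ii)_matchingconditions} together with the careful bookkeeping of codimensions near the one-dimensional manifold $\mathcal{N}_1$ sitting inside the five-dimensional phase space. Fast-jump transversality in $(u, s, c)$ is supplied by Lemma \ref{lem:case(i)_layer_transversality}; transversality of the semi-reduced saddle-saddle connection in $(u, c)$ is supplied by the reversibility/Hamiltonian structure of \eqref{Eq:case(ii)_M0_dynamics} and \eqref{eq:case(ii)_M1_dynamics} as exploited in \cite{BCD.2019,JDC-BM.2020}. Assuming the Jacobian of \eqref{case(ii)_matchingconditions} with respect to $(s_0, c)$ is nondegenerate at the singular solution---which should follow from the first condition being governed primarily by the intermediate $(u,p)$-connection and the second primarily by the fast wave-speed balance---an implicit function argument then delivers a unique persistent pair $(s_0, c)$ for each $(\eps, \delta)$ in the stated asymptotic range, completing the construction with speed $c = c_\diamond(u_*(0), 0) + \mathcal{O}(|\eps| + |\delta|)$.
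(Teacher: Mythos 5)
Your proposal mirrors the paper's proof in essentially every structural respect: the same two-stage Fenichel argument with the roles of $\eps$ and $\delta$ reversed relative to Theorem \ref{case(i)_thm_plateau}, the same use of the $\delta$-independent semi-reduced system \eqref{eq:ODEsystem_modified_rescaled_semireduced} with critical manifolds $\mathcal{M}_0$, $\mathcal{M}_1^\pm$, the same invocation of Lemma \ref{lem:case(i)_layer_transversality} for fast-jump transversality and of \cite[Theorem 3.4]{JDC-BM.2020} for the persistence of the saddle-to-saddle connection, and the same lift to the full five-dimensional flow with the exchange lemma applied near $\mathcal{N}_1^+$ to close the orbit.

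The one place you depart from the paper is in the final closing step. You propose to treat the two matching conditions \eqref{case(ii)_matchingconditions} as a $2\times 2$ system to be solved for $(s_0,c)$ by the implicit function theorem, invoking an unproven Jacobian nondegeneracy assumption. The theorem as stated sidesteps this: the existence of $s_0>0$ and $c>0$ satisfying \eqref{case(ii)_matchingconditions} at the singular level is a \emph{hypothesis}, not a conclusion. The persistence argument then only requires adjusting a \emph{single} parameter --- the wave speed $c$ --- using the transversal breaking of the intersection $\mathcal{W}_0^s\left((1,0,0,0)\right)\cap\mathcal{W}_0^u\left((u_1^+,0,v^+(u_1^+),0)\right)$ with respect to $c$, which is already supplied by Lemma \ref{lem:case(i)_layer_transversality}. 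The $s_0$-value of the plateau need not be separately tuned: it is tracked automatically by the exchange lemma once the manifold $\mathcal{N}_1^+$ carries monotonic superslow $s$-dynamics (this is precisely what the hypothesis $\left.\frac{\mathcal{B} v^+(u_1^+)}{1-\mathcal{H} v^+(u_1^+)}\right|_{s=s_0} > s_0 > 0$ guarantees). So your IFT step is unnecessary and imports an extra hypothesis; removing it and closing in $c$ alone brings your argument into full agreement with the paper's.
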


\begin{proof}
We consider system \eqref{eq:ODEsystem_modified_rescaled_semireduced}, which is independent of $\delta$. For fixed $s=s_0$, \eqref{eq:ODEsystem_modified_rescaled_semireduced} is a singularly perturbed system when $0<\eps\ll 1$. From Fenichel theory, it follows that there exists a $0<\eps_0$ such that for all $0<\eps<\eps_0$, the normally hyperbolic branches $\mathcal{M}_0$ \eqref{eq:case(ii)_M0} and $\mathcal{M}_1^\pm$ \eqref{eq:case(ii)_M1_branches} of the two-dimensional critical manifold $\mathcal{M}$ \eqref{eq:case(ii)_M} persist as locally invariant manifolds. The two-dimensional stable and unstable manifolds of these branches, $\mathcal{W}^{u,s}(\mathcal{M}_0)$ and $\mathcal{W}^{u,s}(\mathcal{M}_1^\pm)$, persist as well.\\

Fix $s_0$. For $\eps=0$, $\mathcal{W}^u(\mathcal{M}_0)$ and $\mathcal{W}^s(\mathcal{M}_1^+)$ intersect transversely in the union of heteroclinic orbits $\bigcup_{u \in \mathcal{M}_1^+} \phi_\dagger(u,s_0)$ \eqref{eq:ODEsystem_case(i)_heteroclinics}.
The concatenation $\ell_0^u \cup \phi_\dagger(\cdot;u_*(s_0),s_0) \cup \ell_1^s[s_0]$, with $u_*(s_0)$ given by \eqref{eq:case(ii)_ustar_equation} for $s=s_0$, provides the intersection of the two-dimensional unstable manifold 
of the trivial equilibrium $\mathcal{W}_0^u\left((1,0,0,0)\right)$ with $\mathcal{W}_0^s \left( (u_1^+,0,v^+(u_1^+),0)\right)$, the two-dimensional stable manifold of the unique equilibrium on $\mathcal{M}_1^+$, which exists and lies on $\mathcal{M}_1^+$ by the inequality $\mathcal{A}>\frac{1}{k} \frac{\mathcal{B}+\mathcal{H} s_0}{1-4 k \left(\mathcal{B}+\mathcal{H} s_0\right)}$. We recall from Lemma~\ref{lem:case(i)_layer_transversality} that the heteroclinic connection $\phi_\dagger(\cdot;u_*,s_0)$ breaks transversely upon varying $u \approx u_*$ or upon varying the speed $c\approx c_\dagger(u_*,s_0)$. Thus for all sufficiently small $\eps>0$, we can ensure the intersection of $\mathcal{W}_0^u\left((1,0,0,0)\right)$ with $\mathcal{W}_0^s \left( (u_1^+,0,v^+(u_1^+),0)\right)$ persists, by varying $s$ near $s=s_0$, thereby varying both $u$ and $c$ near $u=u_*(s_0)$ and $c = c_\dagger(u_*(s_0),s_0)$.

Likewise, fix $s=0$. $\mathcal{W}^u(\mathcal{M}_1^+)$ and $\mathcal{W}^s(\mathcal{M}_0)$ intersect transversely in the union of heteroclinic orbits $\bigcup_{u \in \mathcal{M}_1^+}\phi_\diamond(u,0)$ \eqref{eq:ODEsystem_case(i)_heteroclinics}, since $c = c_\dagger(u_*(s_0),s_0) = c_\diamond(u_*(0),0)$ by the choice of $u_*$ \eqref{case(ii)_matchingconditions}. The concatenation $\ell_1^u[s=0] \cup \phi_\diamond(\cdot;u_*,0) \cup \ell_0^s$ provides the intersection of $\mathcal{W}_0^u\left((u_1^+,0,v^+(u_1^+),0)\right)$, the two-dimensional unstable manifold of the unique equilibrium on $\mathcal{M}_1^+$, with the two-dimensional stable manifold of the trivial equilibrium $\mathcal{W}_0^s\left((1,0,0,0)\right)$. By a similar argument as above, for all sufficiently small $\eps>0$, we can ensure the intersection of $\mathcal{W}_0^s\left((1,0,0,0)\right)$ with $\mathcal{W}_0^u \left( (u_1^+,0,v^+(u_1^+),0)\right)$ persists, by varying $u$ (resp. $c$) near $u=u_*$ (resp. $c=c_\diamond(u_*,0)$) through varying $s$ near $s=0$.

These intersections can be lifted into the geometry of the full five-dimensional system~\eqref{eq:ODEsystem_case(i)} when $\delta=0$. We recall that system admits the one-dimensional critical manifolds $\mathcal{N}_0$ \eqref{eq:case(ii)_N0} and $\mathcal{N}_1$ \eqref{eq:case(ii)_N1}, and that the equilibrium $P_0 := (1,0,0,0,0)$ lies on $\mathcal{N}_0$. The associated values $s_\text{max}$ and $u(v_\text{max})$ \eqref{eq:case(ii)_N1_graph_us} are positive because $0<4 k \mathcal{B} <1$ and $\mathcal{A} > \frac{4 \mathcal{B}^2}{1-4 k \mathcal{B}}$. Both $\mathcal{N}_0$ and the separate branches $\mathcal{N}_1^\pm$ \eqref{eq:case(ii)_branches_Npm} are normally hyperbolic for nonnegative values of $u$, $v$ and $s$, and hence persist for sufficiently small $\delta>0$, as do their three-dimensional stable manifolds $\mathcal{W}^\mathrm{s}(\mathcal{N}_0), \mathcal{W}^\mathrm{s}(\mathcal{N}_1^\pm)$ and three-dimensional unstable manifolds $\mathcal{W}^\mathrm{u}(\mathcal{N}_0), \mathcal{W}^\mathrm{u}(\mathcal{N}_1^\pm)$. In the limit $\delta \to 0$ the unstable manifold $\mathcal{W}^\mathrm{u}(P_0)$ corresponds to the three-dimensional manifold obtained by taking the union
\begin{align}
   \mathcal{W}^\mathrm{u}(P_0) = \bigcup_{s\in \mathcal{N}_0} \mathcal{W}_0^u\left((1,0,0,0)\right)
\end{align}
of the two-dimensional manifolds $\mathcal{W}_0^u\left((1,0,0,0)\right)$ of the trivial equilibrium in $(u,p,v,q)$-space in~\eqref{eq:ODEsystem_modified_rescaled_semireduced} for $s \in \mathcal{N}_0$. Similarly the the stable manifold $\mathcal{W}^\mathrm{s}(P_0)$ of $P_0$ corresponds to the three-dimensional manifold obtained by taking the union
\begin{align}
   \mathcal{W}^\mathrm{s}(P_0) = \bigcup_{s\in \mathcal{N}_0} \mathcal{W}_0^s\left((1,0,0,0)\right)
\end{align}
of the two-dimensional stable manifolds $\mathcal{W}_0^s\left((1,0,0,0)\right)$ for $s\in \mathcal{N}_0$.
By the argument above, when $c=c_\dagger(u_*,s_0)=c_\diamond(u_*,0)$ and $\delta=0$, $\mathcal{W}^\mathrm{s}(P_0)$ transversely intersects the three-dimensional manifold $\mathcal{W}^u(\mathcal{N}_1^+)$ in the hyperplane $\left\{ u = u_*,\; p = -p_* \right\}$ along the intersection of $\mathcal{W}_0^s\left((1,0,0,0)\right)$ with $\mathcal{W}_0^u\left((u_1^+,0,v^+(u_1^+),0)\right)$, and this intersection persists for all sufficiently small $\eps>0$ and values of $c\approx c_\diamond(u_*(0),0)$.  Likewise, $\mathcal{W}^\mathrm{u}(P_0)$ transversely intersects the three-dimensional manifold $\mathcal{W}^s(\mathcal{N}_1^+)$, in the hyperplane $\left\{ u = u_*,\; p = p_* \right\}$ along the intersection of $\mathcal{W}_0^u\left((1,0,0,0)\right)$ with $\mathcal{W}_0^s\left((u_1^+,0,v^+(u_1^+),0)\right)$, and this transverse intersection persists for all sufficiently small $\eps>0$ and values of $c\approx c_\dagger(u_*(s_0),s_0)$.

Now, fix $0<\eps<\eps_0$ and $c$ near $c_\dagger(u_*(s_0),s_0)$. There exists $\delta_0(\eps)$ such that for all sufficiently small $0<\delta<\delta_0(\eps)$, the unstable manifold $\mathcal{W}^\mathrm{u}(P_0)$ therefore transversely intersects $\mathcal{W}^s(\mathcal{N}_1^+)$ at a value of $\tilde{u}(\eps, \delta c)=u_*+\mathcal{O}(\eps+\delta +|c-c_\dagger(u_*(s_0),s_0)|)$ along a stable fiber of the perturbed trajectory on $\mathcal{N}_1^+$. Due to the inequality $\left.\frac{\mathcal{B} v^+(u_1^+)}{1-\mathcal{H} v^+(u_1^+)}\right|_{s=s_0} > s_0 > 0$, the dynamics on $\mathcal{N}_1^+$ are monotonic by the absence of an equilibrium of the superslow $s$-flow. By the exchange lemma~\cite{Schecter.2008}, $\mathcal{W}^\mathrm{u}(P_0)$ aligns exponentially close (in $\delta)$) along the three-dimensional manifold $\mathcal{W}^\mathrm{u}(\mathcal{N}_1^+)$ upon exiting a small neighborhood of $\mathcal{N}_1^+$. We note that when $\eps=\delta=0$ and $u=u_*$, the projection of $\mathcal{N}_1^+$ onto $\mathcal{N}_0$ is orthogonal, thus also for values of $u$ near $u_*$, and all sufficiently small $\eps>0$, $0<\delta<\delta_0(\eps)$. In particular, the projection of $\mathcal{N}_1^+$ onto $\mathcal{N}_0$ transversely intersects $\mathcal{N}_0$ for all sufficiently small $|c-c_\dagger(u_*(s_0),s_0)|$ and $\eps>0$, and $0<\delta<\delta_0(\eps)$. 

Using the fact that $\mathcal{W}^\mathrm{s}(P_0)$ transversely intersects $\mathcal{W}^u(\mathcal{N}_1^+)$ when $\delta=0$ along the intersection of $\mathcal{W}_0^s\left((1,0,0,0)\right)$ with $\mathcal{W}_0^u \left( (u_1^+,0,v^+(u_1^+),0)\right)$, and the fact that this intersection breaks transversely upon varying the wave speed $c\approx c_\diamond(u_*(0),0)$, it remains to adjust
\begin{align}
    c=c_\diamond(u_*(0),0)+\mathcal{O}(\eps, \delta) = c_\dagger(u_*(s_0),s_0)+\mathcal{O}(\eps, \delta)
\end{align}
so that $\mathcal{W}^\mathrm{u}(P_0)$ intersects $\mathcal{W}^\mathrm{s}(P_0)$ upon exiting a neighborhood of $\mathcal{N}_1^+$ corresponding to a homoclinic orbit to the equilibrium $P_0$ of the full system~\eqref{eq:ODEsystem_case(i)} for all sufficiently small $0<\delta<\delta_0(\eps)$, which is approximated by the singular homoclinic orbit \eqref{case(ii)_homoclinic_concatenation}. 
\end{proof}


\section{Numerics} \label{sec:num}

In this section, we present a numerical investigation of System \eqref{eq:UVS_RDsystem_modified} aimed to confirm and extend the analytical results obtained in Sections \ref{sec:casei}-\ref{sec:caseii}. In particular, in Section~\ref{sec:numerics-auto} we use the numerical software AUTO to illustrate the existence results provided in Theorems \ref{case(i)_thm_noplateau}-\eqref{case(i)_thm_plateau} and \ref{case(ii)_thm}. Then in~\ref{sec:numerics-dns}, we perform numerical simulations of System \eqref{eq:UVS_RDsystem_modified} for parameter values corresponding to case (i) and case (ii) in order to explore the dynamics of the resulting pulses, and to provide numerical evidence for the stability of the pulses in the different cases considered in Sections \ref{sec:casei}-\ref{sec:caseii}.

\subsection{AUTO continuation: pulses of cases (i) and (ii) with(out) superslow plateau}\label{sec:numerics-auto} 
Using AUTO70p, we illustrate the results of Theorems \ref{case(i)_thm_noplateau}-\ref{case(i)_thm_plateau} and \ref{case(ii)_thm}, and in particular for small $\eps,\delta$, we explore the resulting traveling wave solutions in relation to the singular limit structures in Sections~\ref{sec:casei}-\ref{sec:caseii}.

We begin with the pulses constructed in Section~\ref{sec:casei}. Fixing the parameters $\mathcal{A}=1.5$, $\mathcal{B}=0.2$, $\mathcal{H}=0.1$, $\eps=10^{-3}$, \mbox{$\delta=10^{-2}$}, we find a pulse \emph{with} superslow plateau at the value $k=0.955$, corresponding to a traveling wave solution as in Theorem~\ref{case(i)_thm_plateau}. Continuing in AUTO, we slightly increase $k$ and find that the superslow plateau reduces in width and eventually disappears, corresponding to the case of Theorem \ref{case(i)_thm_noplateau}, and showing that in fact these families of pulses can lie on the same solution branch. The results of the continuation are depicted in Figure~\ref{fig:auto_casei}.

\begin{figure}
    \centering
    \includegraphics[width=0.4\textwidth]{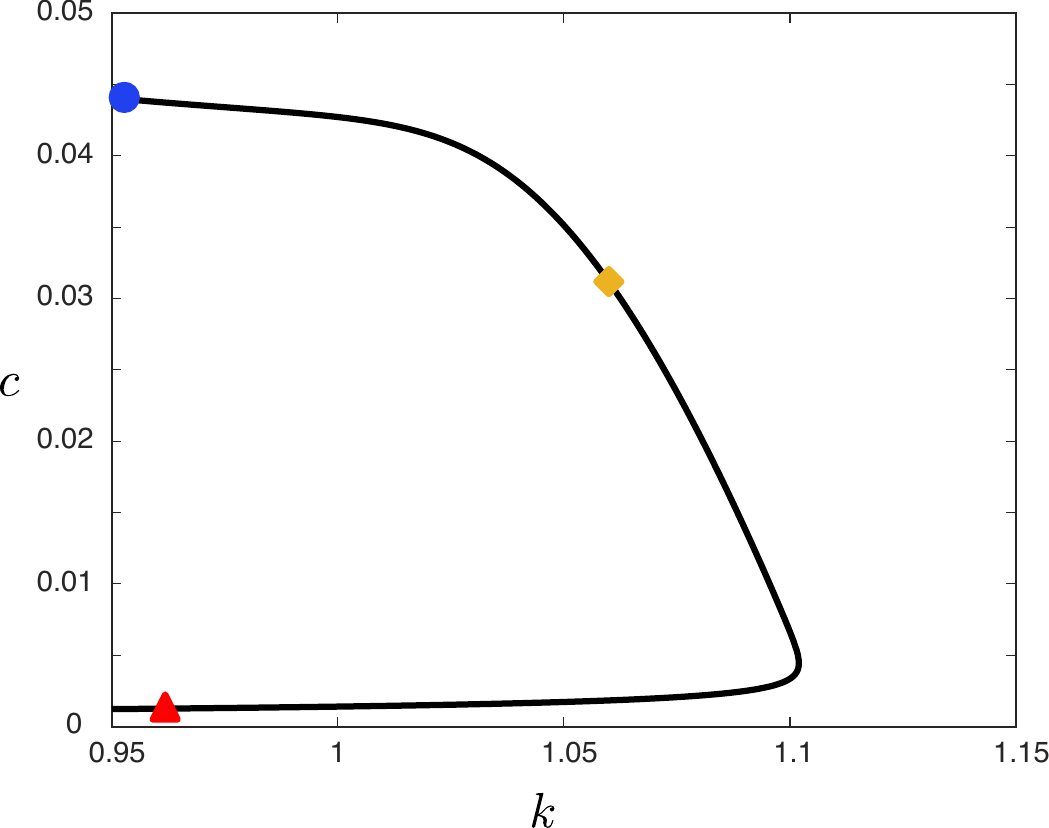}\hspace{0.1\textwidth}
    \includegraphics[width=0.38\textwidth]{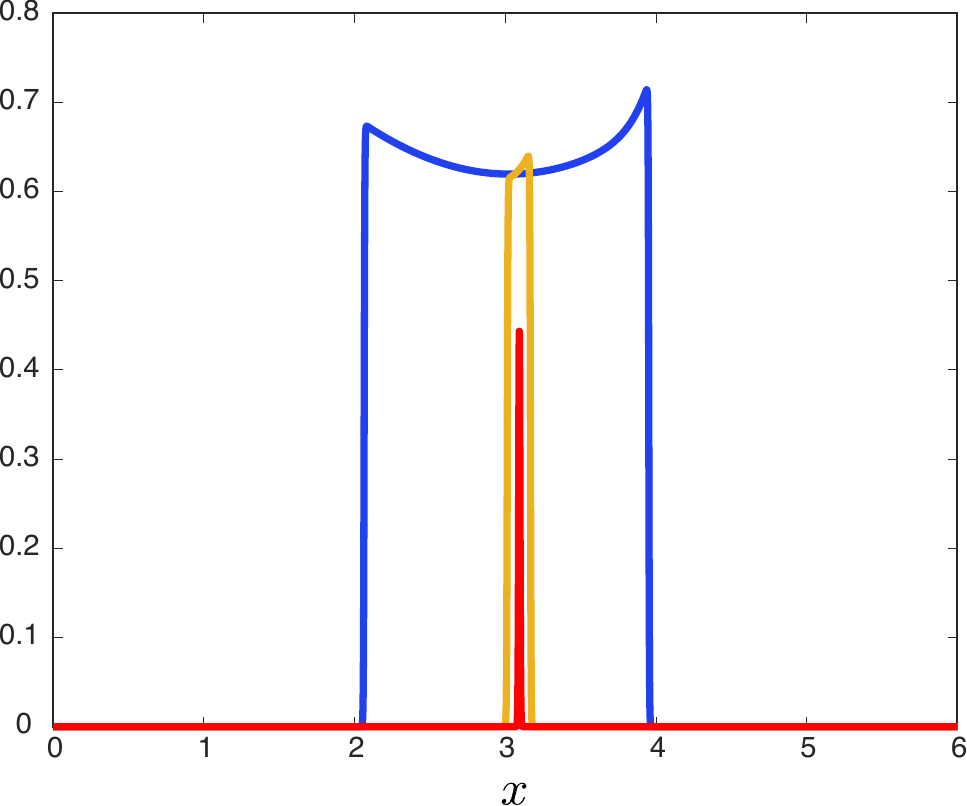}\\~\\
    \includegraphics[width=0.4\textwidth]{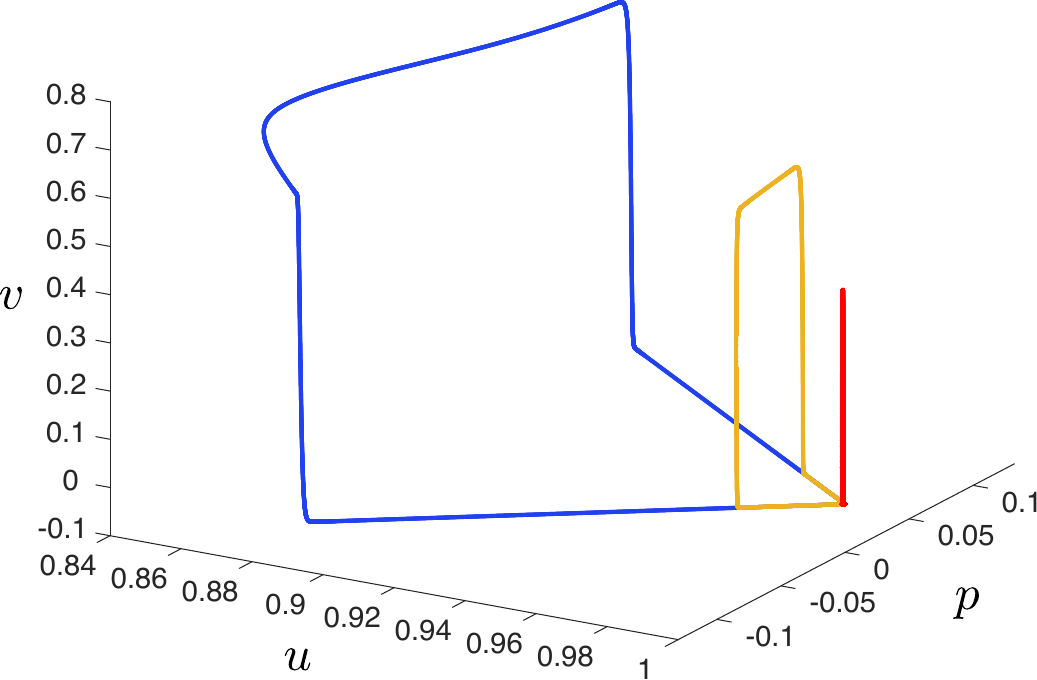}\hspace{0.1\textwidth}
    \includegraphics[width=0.4\textwidth]{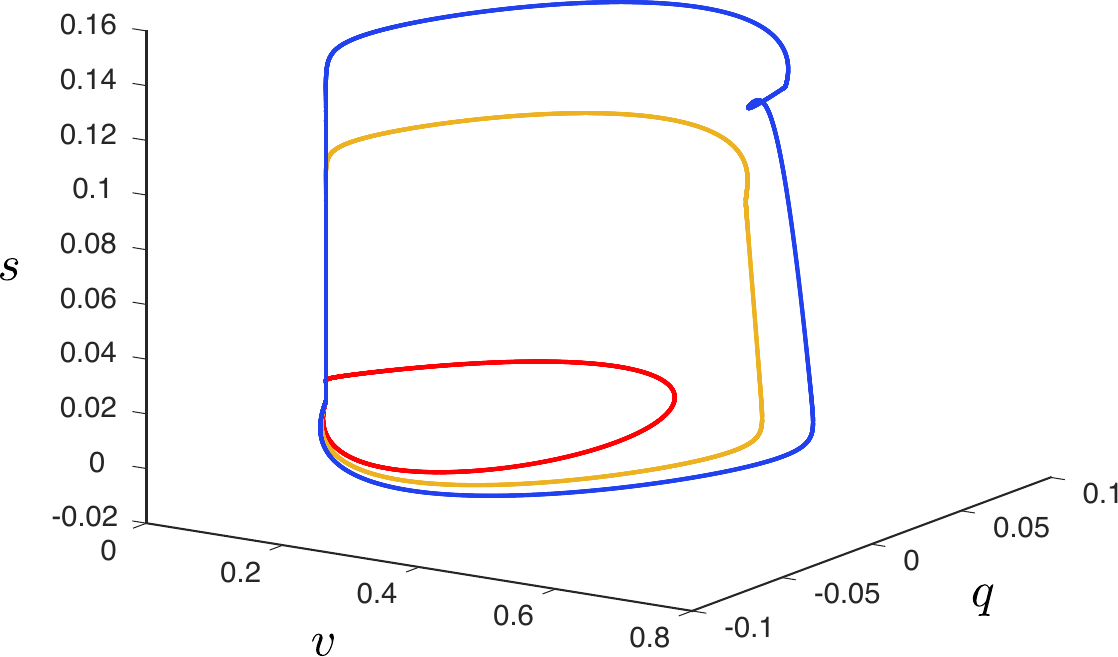}
    \caption{Shown are the results of numerical continuation in the parameter $k$ and wave speed $c$ for the parameters $\mathcal{A}=1.5$, $\mathcal{B}=0.2$, $\mathcal{H}=0.1$, $\eps=10^{-3}$, \mbox{$\delta=10^{-2}$}, corresponding to case (i) in Section~\ref{sec:casei}. The upper left figure depicts $c$ vs. $k$, while the upper right figure depicts $v$-profiles for $3$ different pulses along the continuation curve (color matches corresponding symbol along continuation curve), showing the decreasing plateau width. The lower panels depict the same $3$ pulses in $(u,p,v)$-space (left) and $(v,q,s)$-space (right). Note that the continuation curve eventually turns back on itself, resulting in a fast homoclinic pulse with trivial $s$ and $u$ dynamics (red). }
    \label{fig:auto_casei}
\end{figure}

Proceeding similarly for the pulses constructed in Section~\ref{sec:caseii}, we fix the parameters $\mathcal{A}=1.5$, $\mathcal{B}=0.2$, $\mathcal{H}=0.1$, $\eps=10^{-3}$, $\delta=10^{-4}$, and we again find a pulse at the value $k=0.955$, corresponding to a traveling wave solution of Theorem~\ref{case(ii)_thm}; see Figure~\ref{fig:auto_caseii}. As above, we similarly continue in $k$ and note that again the super slow plateau disappears, resulting in a pulse with trivial $s$-dynamics; see the paragraph directly below \eqref{eq:scaling_case(ii)}.

\begin{figure}
    \centering
    \includegraphics[width=0.4\textwidth]{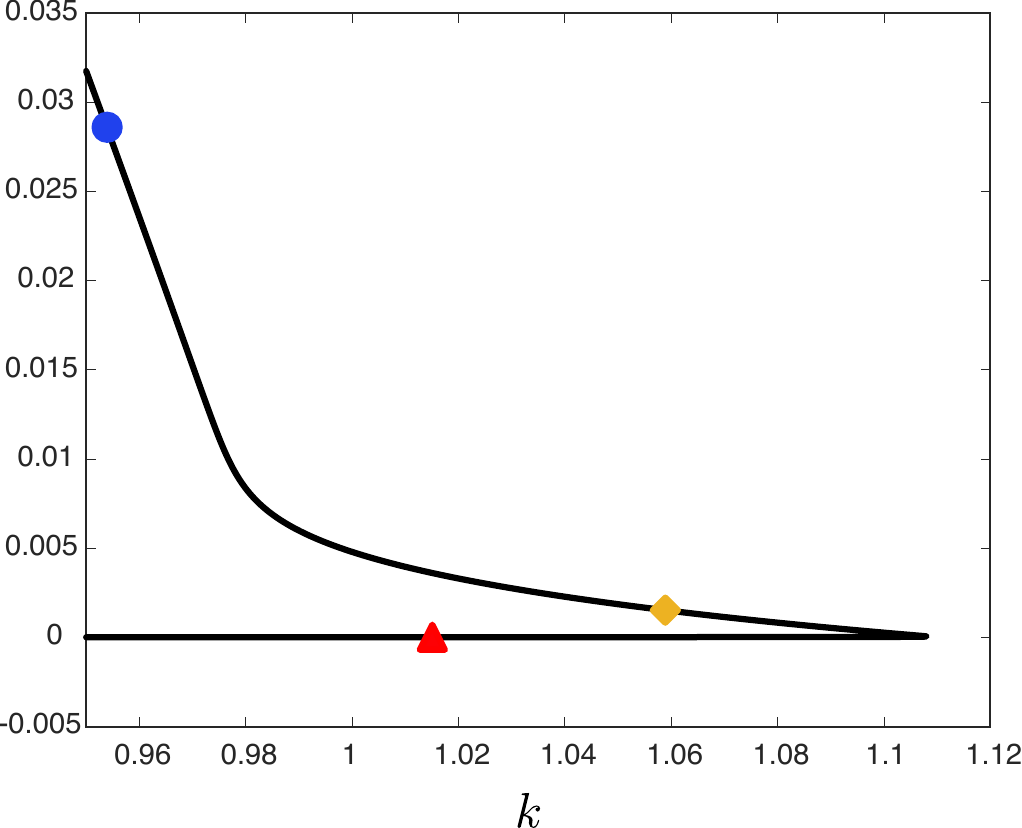}\hspace{0.1\textwidth}
    \includegraphics[width=0.39\textwidth]{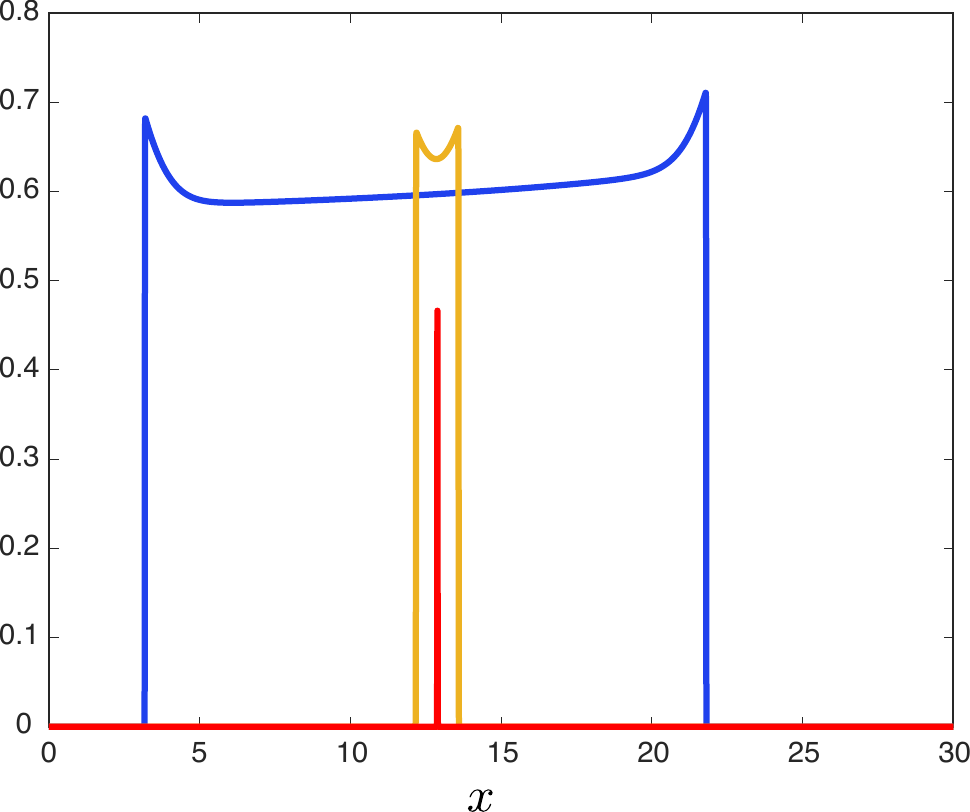}\\~\\
    \includegraphics[width=0.4\textwidth]{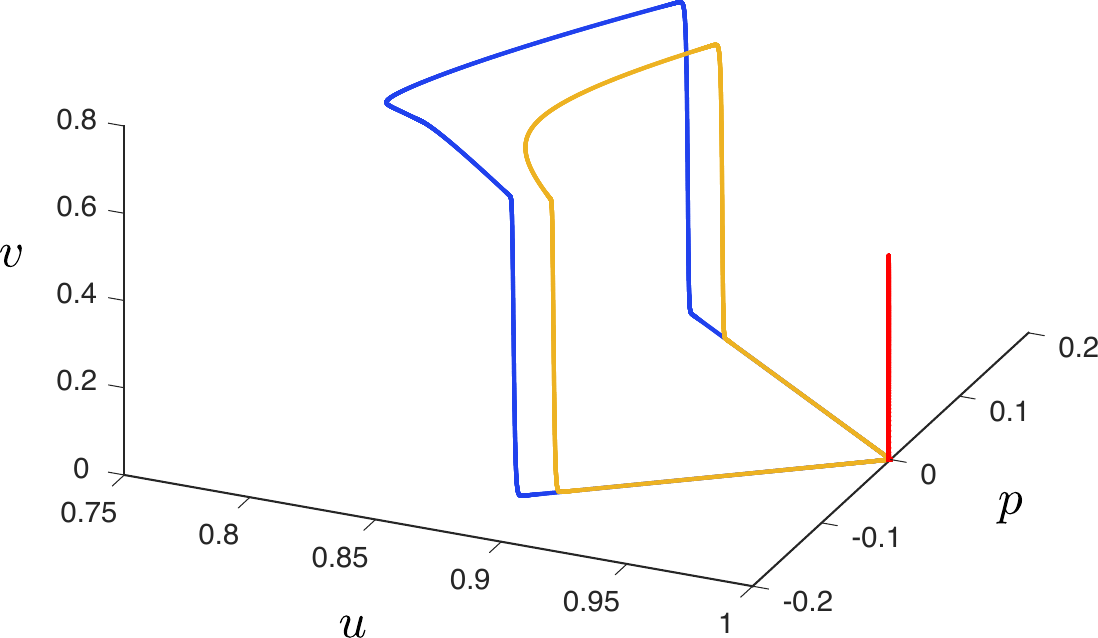}\hspace{0.1\textwidth}
    \includegraphics[width=0.4\textwidth]{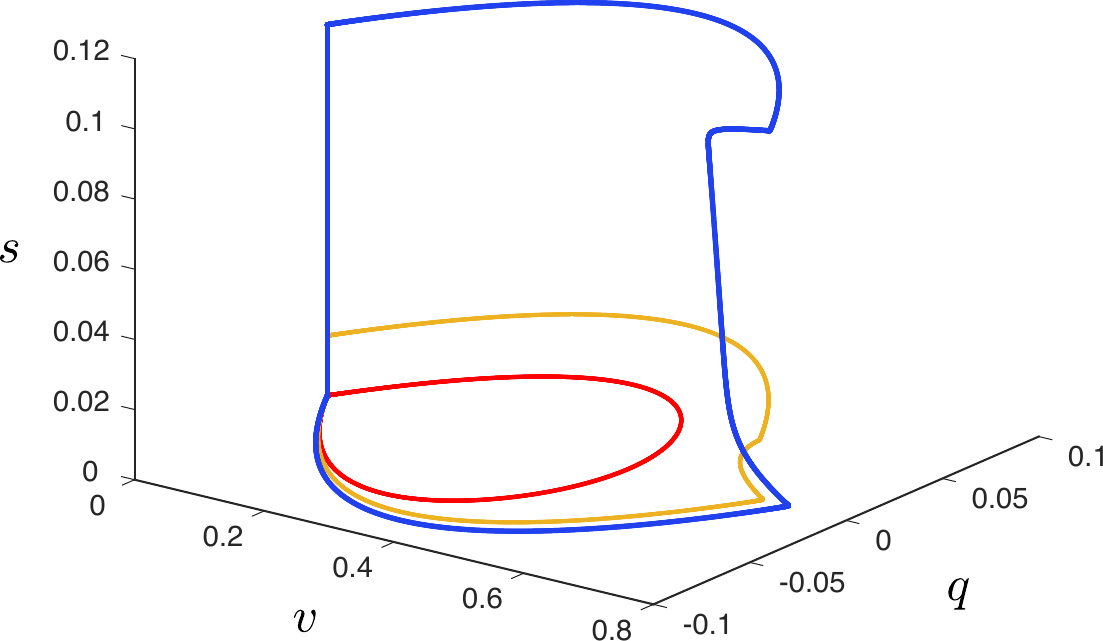}
\caption{Shown are the results of numerical continuation in the parameter $k$ and wave speed $c$ for the parameters $\mathcal{A}=1.5$, $\mathcal{B}=0.2$, $\mathcal{H}=0.1$, $\eps=10^{-3}$, $\delta=10^{-4}$, corresponding to case (ii) in Section~\ref{sec:caseii}. The upper left figure depicts $c$ vs. $k$, while the upper right figure depicts $v$-profiles for $3$ different pulses along the continuation curve (color matches corresponding symbol along continuation curve), showing decreasing plateau width and resulting in trivial $s$-dynamics (yellow). Similarly to case (i) in Figure~\ref{fig:auto_casei}, the continuation curve eventually turns back sharply on itself, resulting in a fast homoclinic pulse with both trivial $s$ and $u$ dynamics (red). The lower panels depict the same $3$ pulses in $(u,p,v)$-space (left) and $(v,q,s)$-space (right).  }
    \label{fig:auto_caseii}
\end{figure}

Finally, we fix $k=0.955$ and continue between the solutions of cases (i) and (ii) above, by decreasing $\delta$ from $10^{-2}$ to $10^{-4}$. The results are depicted in Figures~\ref{fig:auto_case_i_ii} and \ref{fig:auto_case_i_ii_pulses}.

\begin{rmk}\label{rmk:case_iv} The continuation between cases (i) and (ii), as shown in Figures \ref{fig:auto_case_i_ii} and \ref{fig:auto_case_i_ii_pulses}, can be seen as an example of case (iv), i.e. $\eps \sim \delta$. Although we do not treat this case analytically, a rough back-of-the-envelope calculation would start with writing $\delta = \delta_0 \eps$ for fixed $\delta_0$. Application of this scaling to systems \eqref{eq:ODEsystem_modified_rescaled} and \eqref{eq:ODEsystem_modified_fast_rescaled} reveals that its fast reduced system is equal to \eqref{eq:ODEsystem_case(i)_layer}, as treated in Section \ref{sec:case(i)_fast}. The (single) slow reduced system associated to the scaling $\delta = \delta_0 \eps$ has simultaneous $(u,p)$ and $s$-dynamics, given by \eqref{eq:ODEsystem_case(i)_superslow} and \eqref{eq:case(i)_sflow_E} in the formulation of Section \ref{sec:casei}; or, equivalently, by \eqref{eq:case(i)_sflow_E} and \eqref{eq:case(ii)_M1_dynamics} in the formulation of Section \ref{sec:caseii}. The associated trivial and nontrivial critical manifolds are given by
    \begin{align*}
        \mathcal{Q}_0 &:= \left\{v=q=0\right\} = \bigcup_{u_0,p_0} \mathcal{E}_0 = \bigcup_{s_0} \mathcal{M}_0, \\
        \mathcal{Q}_1 &:= \left\{\mathcal{B}+\mathcal{H}s-u v(1-k v)= 0\right\} = \bigcup_{u_0,p_0} \mathcal{E}_1^\pm = \bigcup_{s_0} \mathcal{M}_1,
    \end{align*}
cf. \eqref{eq:case(i)_E0_Epm} and \eqref{eq:case(ii)_M0}, \eqref{eq:case(ii)_M1}. The key observation enabling analytical accessibility is that the slow dynamics are decoupled, that is, $(u,p)$ does not influence $s$ and vice versa. The singular construction of a travelling pulse and its proof of persistence can therefore be carried out in a manner analogous to the analysis presented in sections \ref{sec:casei} and \ref{sec:caseii}. One would have to carry out such analysis in detail to investigate how the existence conditions of Theorem \ref{case(i)_thm_plateau} would match with those of Theorem \ref{case(ii)_thm}.
\end{rmk}

\begin{figure}
    \centering
    \includegraphics[width=0.4\textwidth]{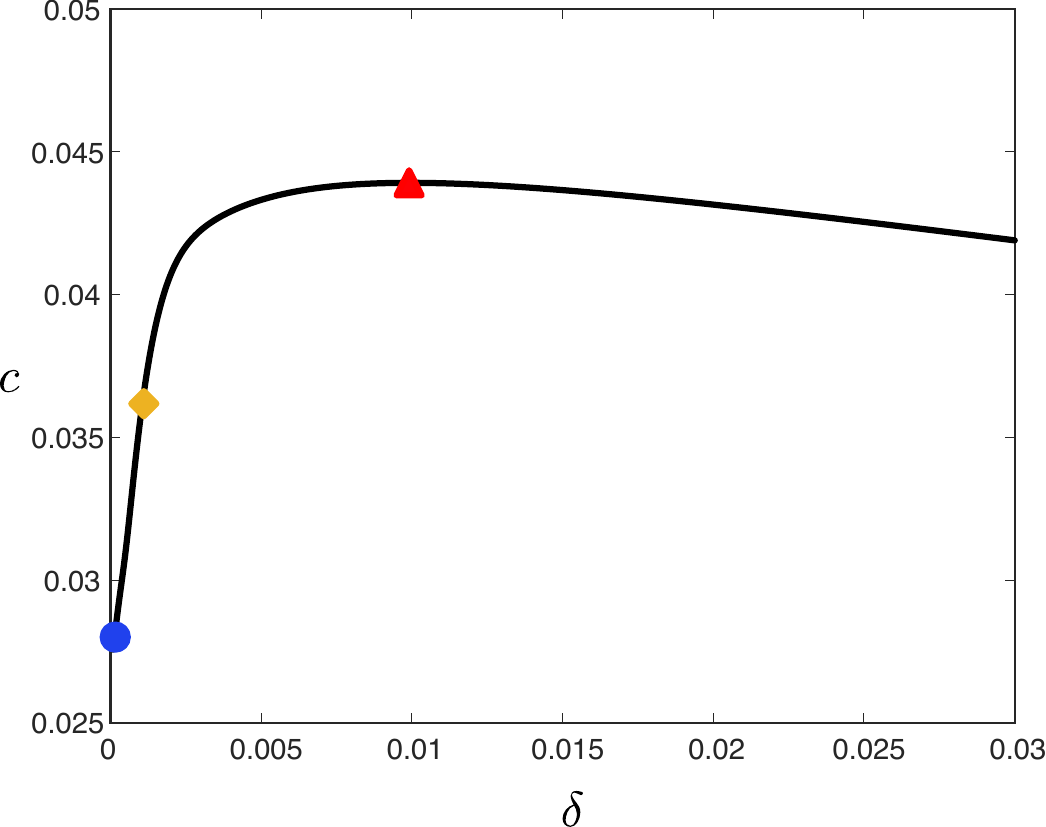}\hspace{0.05\textwidth}
    \includegraphics[width=0.5\textwidth]{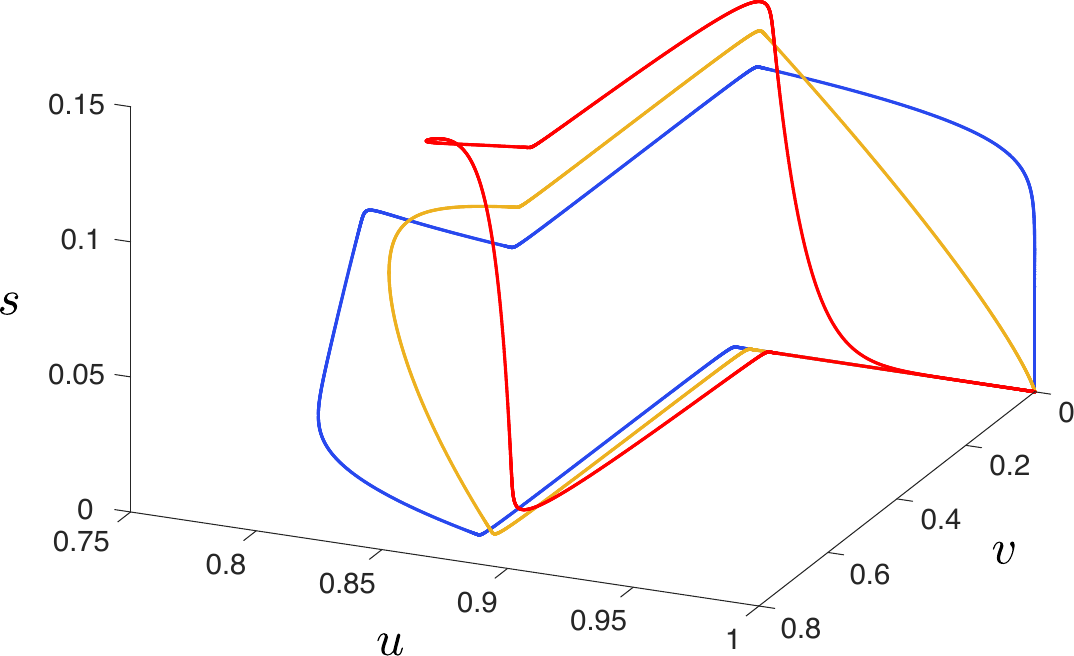}
\caption{(Left) Shown are the results of numerical continuation in the parameter $\delta$ and wave speed $c$ for the parameters $A=1.5, B=0.2, H=0.1, \eps=10^{-3}, k=0.955$, depicting a transition between the pulses of cases (i) and (ii). (Right) Depicted in in $(u,v,s)$-space are $3$ pulses obtained along the transition at the values $\delta=10^{-2}, 10^{-3}, 10^{-4}$. The color of the orbit matches that of the corresponding symbol in the continuation curve. The associated $u,v$, and $s$ profiles are shown in Figure~\ref{fig:auto_case_i_ii_pulses}. }
    \label{fig:auto_case_i_ii}
\end{figure}

\begin{figure}
    \centering
    \includegraphics[width=0.3\textwidth]{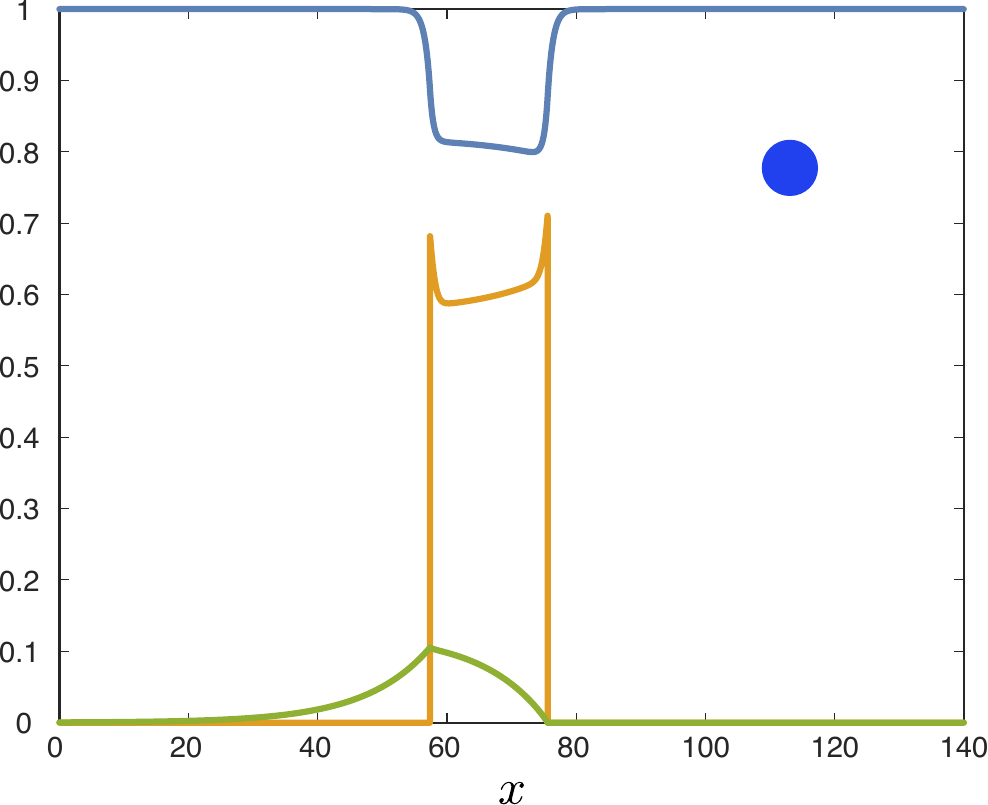}\hspace{0.025 \textwidth}
    \includegraphics[width=0.3\textwidth]{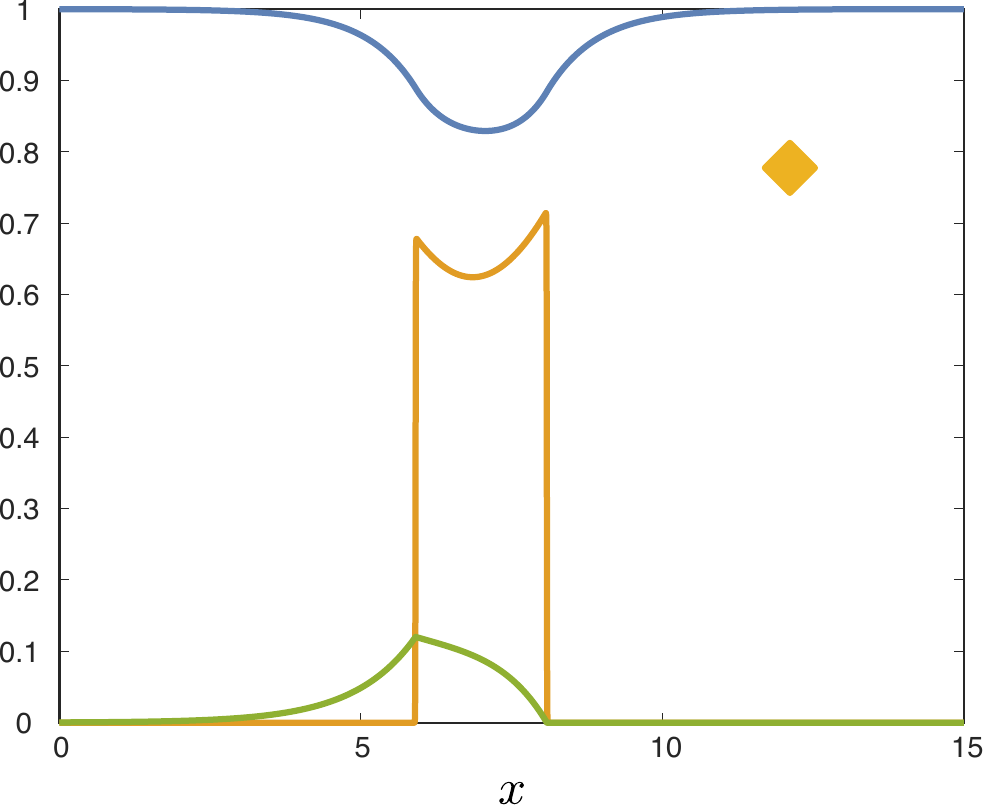}\hspace{0.025 \textwidth}
    \includegraphics[width=0.3\textwidth]{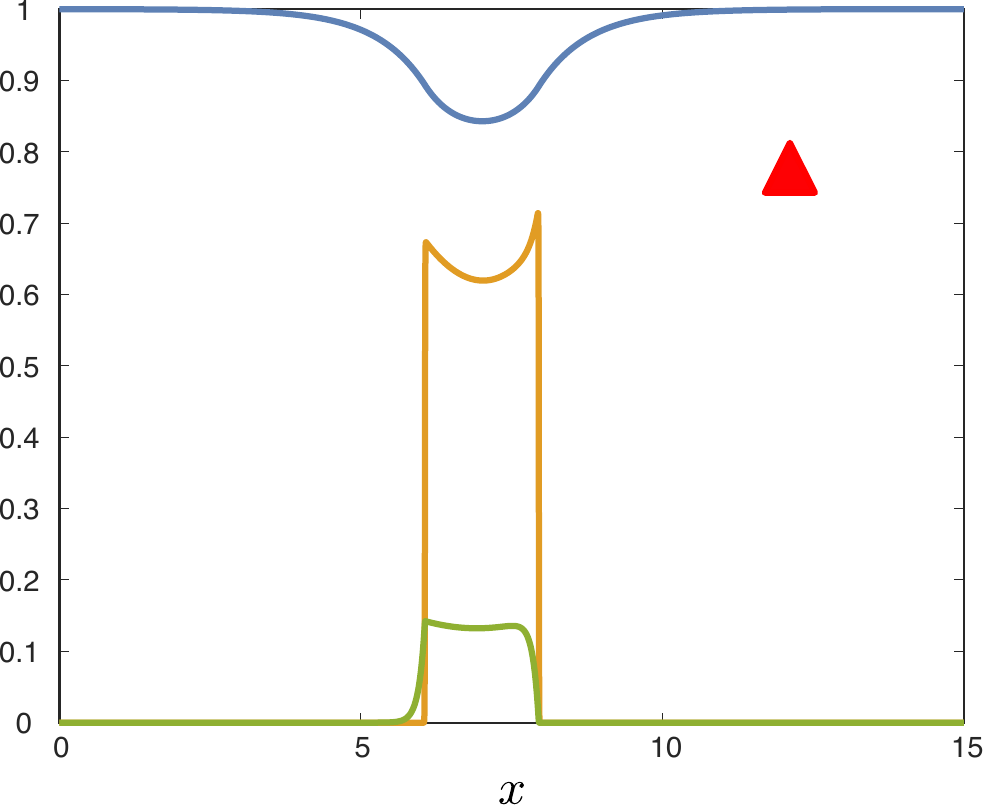}
\caption{Shown are the pulses from Figure~\ref{fig:auto_case_i_ii} obtained at the values (from left to right) $\delta=10^{-4}, 10^{-3}, 10^{-2}$, labeled with the symbols as in the continuation curve in Figure~\ref{fig:auto_case_i_ii}. The $u,v,s$ profiles are depicted in blue, orange, and green, respectively. We can see how the structure of the profile changes as we transition between cases (i) and (ii) of Sections~\ref{sec:casei}-\ref{sec:caseii}. (Note the different length scale on the leftmost plot.)}
    \label{fig:auto_case_i_ii_pulses}
\end{figure}

\subsection{Direct simulations}\label{sec:numerics-dns}

The scope of this section is to illustrate travelling wave solutions in cases (i) and (ii) as predicted by Theorems \ref{case(i)_thm_noplateau}-\eqref{case(i)_thm_plateau} and \eqref{case(ii)_thm}, respectively. To this aim, we perform direct simulation of System \eqref{eq:UVS_RDsystem_modified} in MATLAB by using finite differences for spatial discretization with periodic boundary conditions together with MATLAB’s ode15s routine for time integration. Our investigation is performed in case (i) considering both profiles without and with superslow plateau and in case (ii). The number $N$ of points used for the spatial discretization is $N=29970$ for case (i) and $N=10000$ for case (ii). The initial conditions are chosen from the profiles obtained with the AUTO continuation discussed in Section \ref{sec:numerics-auto} as follows. For case (i), we fix the parameters $\mathcal{A}=1.5$, $\mathcal{B}=0.2$, $\mathcal{H}=0.1$, $\eps=10^{-3}$, $\delta=10^{-2}$, and $k=1.059$. Moreover, we consider a domain length $L=10$. This leads to a numerical speed $c=0.0316493$, which in turn implies $\mathcal{D}=3160$ (see Eq.~\eqref{eq:delta}). The resulting profile obtained in Section \ref{sec:numerics-auto} is used as initial condition for a travelling pulse corresponding to case (i) without superslow plateau.
On the other hand, choosing the profile corresponding to $k=0.955$, $c=0.0439164$ in the AUTO continuation -- which leads to $\mathcal{D}=2277$ -- and $L=20$ while leaving the parameters $\mathcal{A}$, $\mathcal{B}$, $\mathcal{H}$, $\varepsilon$, and $\delta$ unaltered, we obtain the initial condition used for the numerical simulations corresponding to case (i) with superslow plateau. Finally, fixing $\mathcal{A}$, $\mathcal{B}$, $\mathcal{H}$, and $k$ as in this last scenario while considering $\eps=10^{-2}$, $\delta=10^{-3}$, $c=0.0266721$, $\mathcal{D}=37492$, and $L=60$ we obtain the AUTO orbit to use as initial condition for case (ii).
In case (i), we recover the two types of travelling wave solutions -- without and with superslow plateau -- predicted by Theorems \ref{case(i)_thm_noplateau} and \eqref{case(i)_thm_plateau}, respectively: for a comparison of the orbits in $(V, \, V_x, \, S)$ phase space (corresponding to $(v, \, q, \, s)$ in our analysis), see Figures \ref{fig:case(i)_no_plateau}-\ref{fig:casei}(a) for solutions without superslow plateau, and Figures \ref{fig:case(i)_plateau}-\ref{fig:casei}(b) for solutions with superslow plateau. In case (ii), we retrieve the travelling wave solution constructed in Theorem \ref{case(ii)_thm}, respectively: for a comparison of the orbits in $(V, \, U, \, S)$ phase space (corresponding to $(v, \, u, \, s)$ in our analysis), see Figures \ref{fig:case(ii)_full_double_heteroclinic}-\ref{fig:caseii}. The space-time evolution of the travelling pulses is shown in Fig.~\ref{fig:timeev_casei} for case (i) and Fig.~\ref{fig:timeev_caseii} for case (ii).



\begin{figure}
\begin{minipage}{.3\textwidth}
    \centering
    \begin{overpic}[scale=0.34]{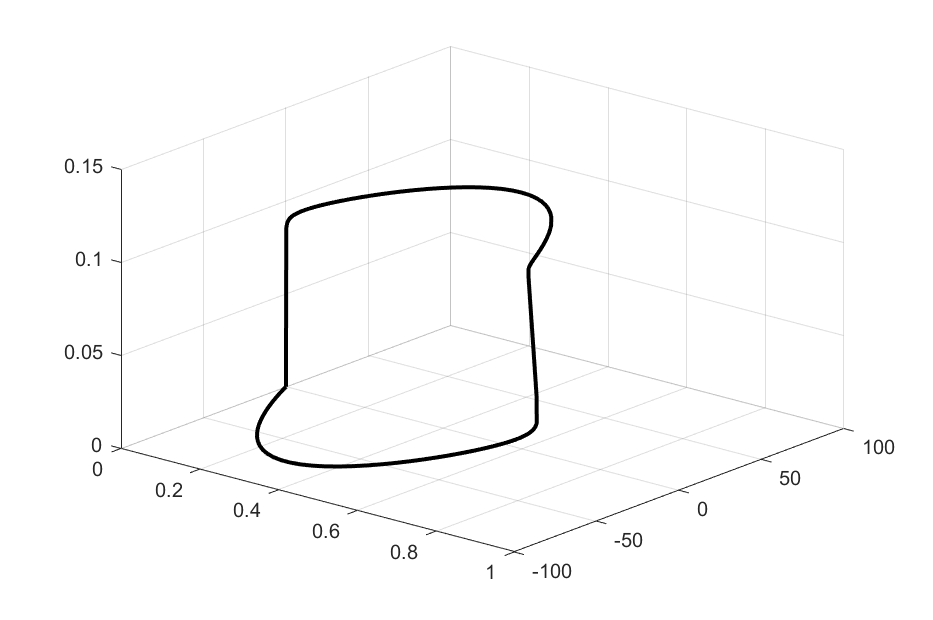}
        \put(50,65){\textbf{(a)}}
  	\put(72,3){$V$}
  	\put(23,5){$V_x$}
  	\put(2,35){$S$}
    \end{overpic}
    \end{minipage}
\hspace{3.5cm}
\begin{minipage}{.3\textwidth}
    \centering
    \begin{overpic}[scale=0.34]{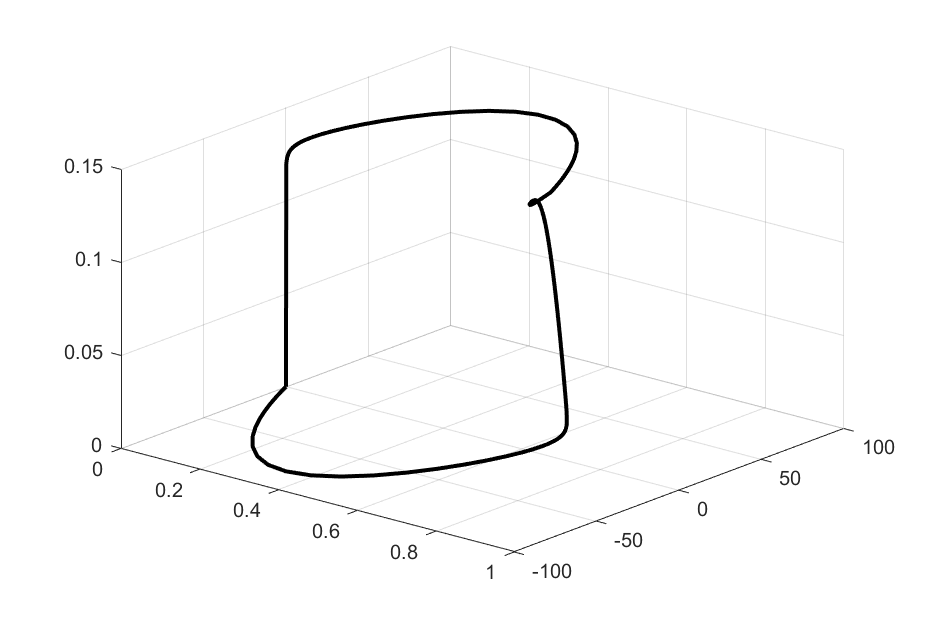}
        \put(50,65){\textbf{(b)}}
  	\put(72,3){$V$}
  	\put(23,5){$V_x$}
  	\put(2,35){$S$}
    \end{overpic}
    \end{minipage}
   \caption{Travelling wave solutions of System \eqref{eq:UVS_RDsystem_modified} in phase-space $(V, \, V_X, \, S)$ obtained by direct simulations with periodic boundary conditions on a 1D domain of length $L$ up to time $T$ using parameter values compatible with case (i). Panels (a) and (b) show a solution without and with superslow plateau, respectively. The parameter values used in the simulations are $\mathcal{A} = 1.5$, $\mathcal{B} = 0.2$, $\mathcal{H} = 0.1$, $\eps = 10^{-3}$, $T=5 \cdot 10^4$, and (a) $\mathcal{D} = 3160$, $k = 1.059$, $L=10$; (b) $\mathcal{D} = 2277$, $k = 0.955$, $L = 20$.}
   \label{fig:casei}
\end{figure}

\begin{figure}[!ht]
    \centering
    \begin{overpic}[scale=0.35]{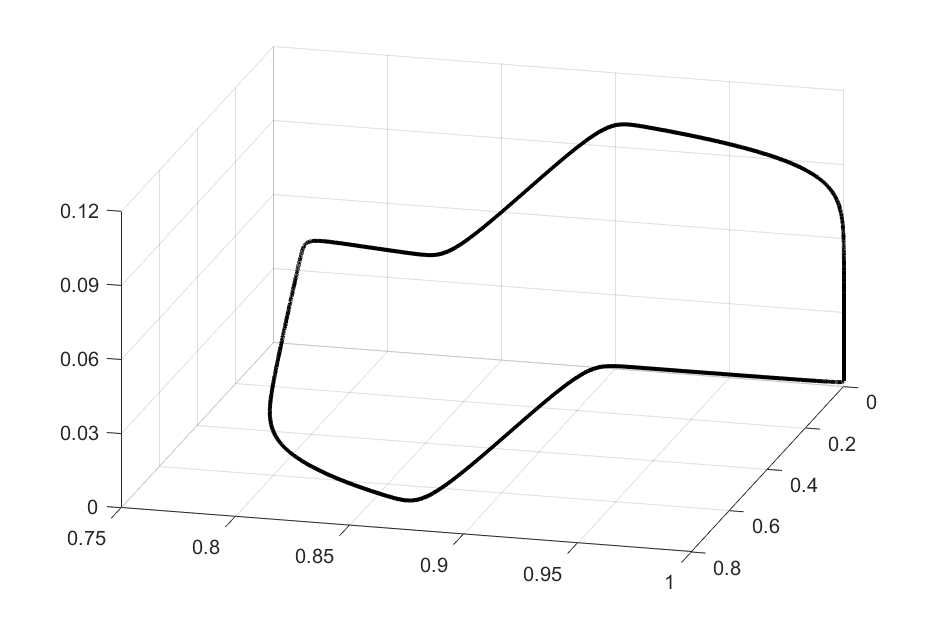}
  	\put(92,12){$V$}
  	\put(40,0){$U$}
  	\put(2,25){$S$}
    \end{overpic}
   \caption{Travelling wave solutions of System \eqref{eq:UVS_RDsystem_modified} in phase-space $(V, \, V_X, \, S)$ obtained by direct simulations with periodic boundary conditions on a 1D domain of length $L$ up to time $T$ using parameter values compatible with case (ii). The parameter values used in the simulations are $\mathcal{A} = 1.5$, $\mathcal{B} = 0.2$, $\mathcal{D} = 37492$, $\mathcal{H} = 0.1$, $\eps = 0.01$, $k = 0.955$, $L = 60$, and $T=5 \cdot 10^4$.}
   \label{fig:caseii}
\end{figure}

\begin{figure}[!ht]
    \begin{minipage}{.3\textwidth}
    \centering
    \begin{overpic}[scale=0.32]{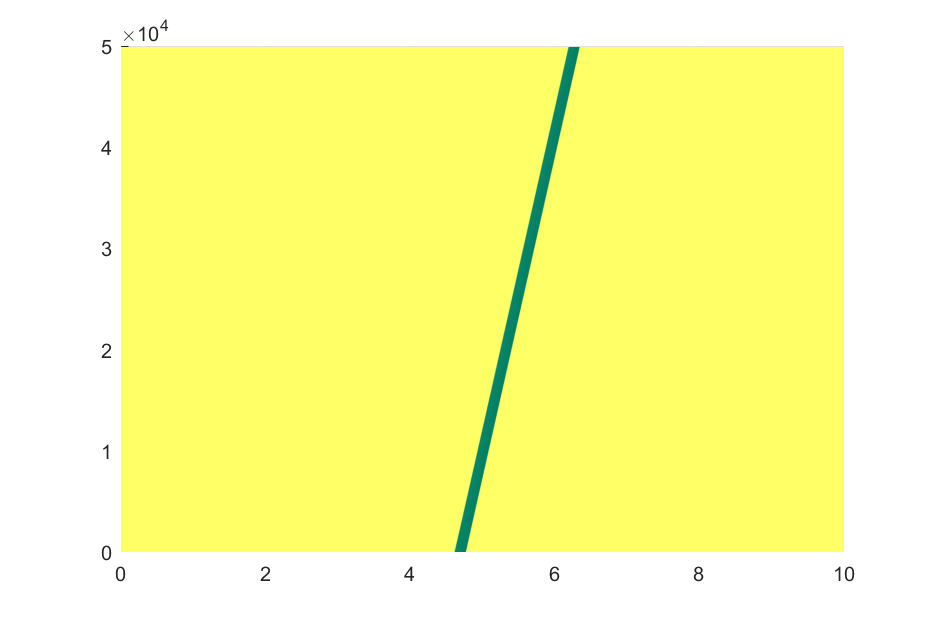}
        \put(50,65){\textbf{(a)}}
  	\put(50,-1){$x$}
  	\put(3,35){$t$}
    \end{overpic}
    \end{minipage}
    \hspace{3cm}
    \begin{minipage}{.3\textwidth}
    \centering
    \begin{overpic}[scale=0.32]{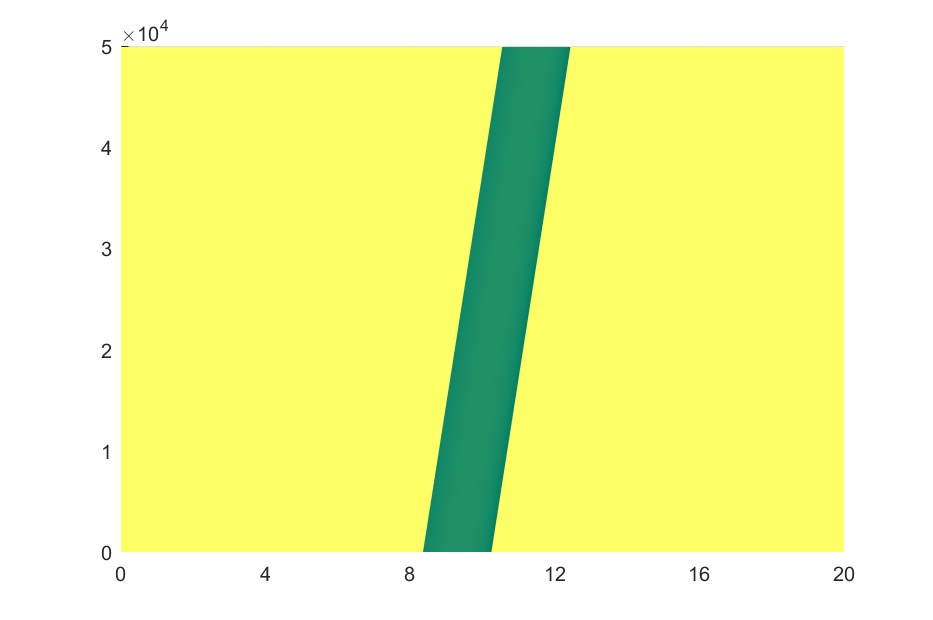}
        \put(50,65){\textbf{(b)}}
  	\put(50,-1){$x$}
  	\put(3,35){$t$}
    \end{overpic}
    \end{minipage}
   \caption{Space-time evolution of travelling wave solutions of System \eqref{eq:UVS_RDsystem_modified} obtained by direct simulations with periodic boundary conditions on a 1D domain of length $L$ up to time $T$ using parameter values compatible with case (i). Panels (a) and (b) show a solution without and with superslow plateau, respectively. The parameter values used in the simulations are $\mathcal{A} = 1.5$, $\mathcal{B} = 0.2$, $\mathcal{H} = 0.1$, $\eps = 10^{-3}$, $T=5 \cdot 10^4$, and (a) $\mathcal{D} = 3160$, $k = 1.059$, $L=10$; (b) $\mathcal{D} = 2277$, $k = 0.955$, $L = 20$.}
   \label{fig:timeev_casei}
\end{figure}

\begin{figure}[!ht]
    \centering
    \begin{overpic}[scale=0.32]{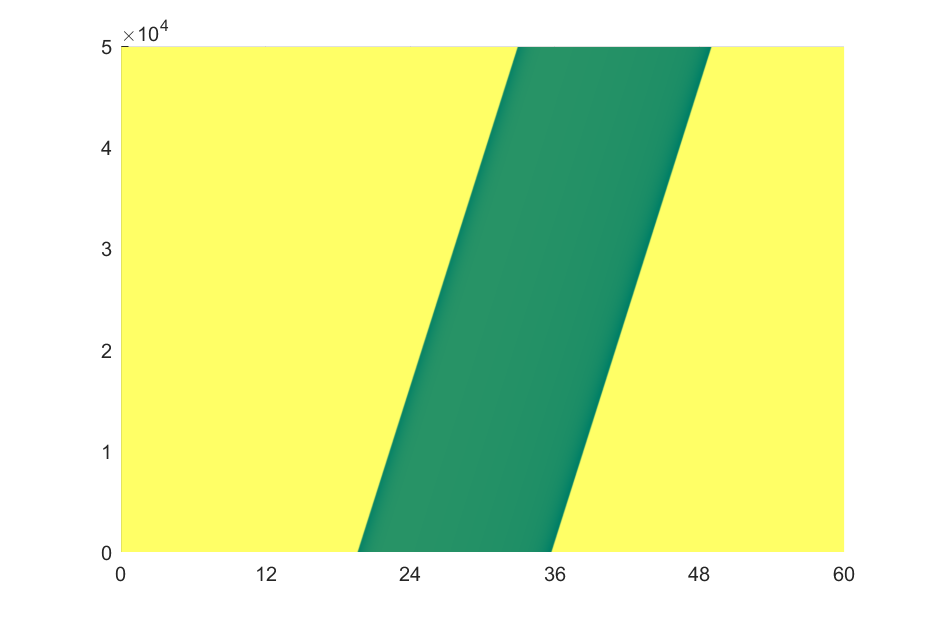}
  	\put(50,-1){$x$}
  	\put(3,35){$t$}
    \end{overpic}
   \caption{Space-time evolution of travelling wave solutions of System \eqref{eq:UVS_RDsystem_modified} obtained by direct simulations with periodic boundary conditions on a 1D domain of length $L$ up to time $T$ using parameter values compatible with case (ii). The parameter values used in the simulations are $\mathcal{A} = 1.5$, $\mathcal{B} = 0.2$, $\mathcal{D} = 37492$, $\mathcal{H} = 0.1$, $\eps = 0.01$, $k = 0.955$, $L = 60$, and $T=5 \cdot 10^4$.}
   \label{fig:timeev_caseii}
\end{figure}



\section{Discussion} \label{sec:concl}

In this paper, we considered the existence of traveling homoclinic pulse solutions to the Klausmeier model with autotoxicity~\eqref{eq:UVS_RDsystem_modified}. Based on the interplay between three timescales, determined by the relative scaling of the parameters $\eps,\delta$, related to the diffusion coefficient ratio and autotoxicity timescale, respectively, we constructed a variety of homoclinic stripe solutions for $\mathcal{O}(1)$ values of the system parameters $\mathcal{A},\mathcal{B}, \mathcal{H}, k$. In particular, the inclusion of the effects of autotoxicity allows for a much broader variety of traveling stripe solutions, with or without extended spatial plateaus, than in prior studies of the Klausmeier model without autotoxicity effects~\cite{BCD.2019}. 

While the results of the current work are focused on existence of traveling waves, the numerical simulations in~\S\ref{sec:numerics-dns} suggest that the traveling waves constructed in the scaling regimes (i) and (ii) are in fact stable as solutions of~\eqref{eq:UVS_RDsystem_modified}, at least in one spatial dimension. A natural question concerns the spectral stability of the waves constructed here: such an analysis would inherit the technical challenges present in the existence construction due to the three-timescale structure. Nevertheless, we expect that existing techniques, using exponential trichotomies and/or Evans function constructions~\cite{BastiaansenDoelman.2019, DoelmanVeerman.2015}, could similarly be extended to handle the geometry of the linearized system associated with~\eqref{eq:UVS_RDsystem_modified}, to track the small eigenvalues near the origin associated with the (multiple) fast fronts which appear in the singular homoclinic constructions. 

Our analysis focuses on the construction of localized homoclinic one-dimensional waves, though from the ecological point of view, spatial vegetation patterns are inherently two-dimensional and not necessarily homoclinic (i.e. `solitary'): they can form complex structures such as (interacting and deforming (multiple)) stripes and/or spots, and labyrinthine patterns~\cite{Deblauwe.2012, meron2019vegetation, valentin1999soil}. In one space dimension, the traveling waves constructed here form the basis for the construction of spatially periodic patterns or, more generally, for the reduction of the PDE to a finite dimensional ODE (on an approximate finite-dimensional manifold) describing the dynamics of multi-pulse patterns -- see \cite{BastiaansenDoelman.2019,SewaltDoelman.2017,promislow2002renormalization} and the references therein. The three spatial scales structure of the patterns considered here are expected to also add a deeper layer to the methods developed in the literature on spatially periodic and interacting pulses -- perhaps giving rise to unexpected challenges and/or phenomena.

In the planar setting, structures such as planar stripes or spot/gap/ring-type solutions can be built from the one-dimensional traveling waves studied here -- by either extending them in the direction transverse to propagation or by imposing radial symmetry. In the simpler Klausmeier model (without autotoxicity)~\cite{BastiaansenDoelman.2019, Klausmeier.1999}, 
\begin{subequations}\label{eq:pdeklaus}
\begin{align}
 \frac{\partial U}{\partial t} &= \Delta U + \mathcal{A} \left(1-U\right) - U V^2,\\
 \frac{\partial V}{\partial t} &= \eps^2 \Delta V + U V^2(1-k V) - \mathcal{B} V
\end{align}
\end{subequations}
and in other models with similar geometry~\cite{van2011planar}, the stability and dynamics of such singularly perturbed, far-from-onset, structures in two spatial dimensions has been considered. In~\eqref{eq:pdeklaus}, two-dimensional stripes, and large radial spot or ring solutions are expected to be unstable, though there is evidence that smaller spot solutions can be stable~\cite{byrnes2023large}, as can planar stripe solutions in the presence of advection (sloped terrain)~\cite{BastiaansenDoelman.2019,Carter.2023}. An area of future work concerns whether the consideration of autotoxicity in~\eqref{eq:UVS_RDsystem_modified} may have a stabilizing effect on the one-dimensional traveling waves constructed here, when extended into two dimensions as stripes or radially symmetric spot or ring-type solutions. Solutions constructed in the present study may help us better understanding from the mathematical viewpoint the different types of structures, such as fairy-rings, that emerge in ecological settings~\cite{Salvatori_2023}.

\paragraph{Acknowledgements} PC was partially supported by the National Science Foundation through the grants DMS-2105816 and DMS-2238127. The research of AD is supported by the European Research Council (ERC-Synergy project RESILIENCE, proposal nr. 101071417). AI is a member of Gruppo Nazionale per la Fisica Matematica (GNFM), Istituto Nazionale di Alta Matematica (INdAM) and acknowledges partial support from an FWF Hertha Firnberg Research Fellowship (T 1199-N).



\bibliography{travtoxpulses}

\begin{thebibliography}{10}
\providecommand{\url}[1]{\texttt{#1}}
\providecommand{\urlprefix}{URL }
\expandafter\ifx\csname urlstyle\endcsname\relax
  \providecommand{\doi}[1]{doi:\discretionary{}{}{}#1}\else
  \providecommand{\doi}{doi:\discretionary{}{}{}\begingroup
  \urlstyle{rm}\Url}\fi

\bibitem{BCD.2019}
R.~Bastiaansen, P.~Carter, and A.~Doelman.
\newblock Stable planar vegetation stripe patterns on sloped terrain in dryland
  ecosystems.
\newblock \emph{Nonlinearity} \textbf{32}(8): pp. 2759--2814 (2019).
\newblock \doi{10.1088/1361-6544}.
\newline\urlprefix\url{https://doi.org/10.1088/1361-6544}

\bibitem{BastiaansenDoelman.2019}
R.~Bastiaansen and A.~Doelman.
\newblock The dynamics of disappearing pulses in a singularly perturbed
  reaction-diffusion system with parameters that vary in time and space.
\newblock \emph{Physica D: Nonlinear Phenomena} \textbf{388}: pp. 45--72
  (2019).
\newblock \doi{10.1016/j.physd.2018.09.003}.
\newline\urlprefix\url{https://doi.org/10.1016/j.physd.2018.09.003}

\bibitem{Bastiaansen_2020}
R.~Bastiaansen, A.~Doelman, M.~B. Eppinga, and M.~Rietkerk.
\newblock The effect of climate change on the resilience of ecosystems with
  adaptive spatial pattern formation.
\newblock \emph{Ecology Letters} \textbf{23}(3): pp. 414--429 (2020).
\newblock \doi{10.1111/ele.13449}.
\newline\urlprefix\url{https://doi.org/10.1111%2Fele.13449}

\bibitem{Bastiaansen_2018}
R.~Bastiaansen, O.~Jaïbi, V.~Deblauwe, M.~B. Eppinga, K.~Siteur, E.~Siero,
  S.~Mermoz, A.~Bouvet, A.~Doelman, and M.~Rietkerk.
\newblock Multistability of model and real dryland ecosystems through spatial
  self-organization.
\newblock \emph{Proceedings of the National Academy of Sciences}
  \textbf{115}(44): pp. 11256--11261 (2018).
\newblock \doi{10.1073/pnas.1804771115}.
\newline\urlprefix\url{https://doi.org/10.1073%2Fpnas.1804771115}

\bibitem{bever2015maintenance}
J.~Bever, S.~Mangan, and H.~Alexander.
\newblock Maintenance of plant species diversity by pathogens.
\newblock \emph{Annual review of ecology, evolution, and systematics}
  \textbf{46}: pp. 305--325 (2015).

\bibitem{Bonanomi_2005}
G.~Bonanomi, F.~Giannino, and S.~Mazzoleni.
\newblock Negative plant-soil feedback and species coexistence.
\newblock \emph{Oikos} \textbf{111}(2): pp. 311--321 (2005).
\newblock \doi{10.1111/j.0030-1299.2005.13975.x}.
\newline\urlprefix\url{https://doi.org/10.1111%2Fj.0030-1299.2005.13975.x}

\bibitem{Bonanomi.2011}
G.~Bonanomi, G.~Incerti, E.~Barile, M.~Capodilupo, V.~Antignani, A.~Mingo,
  V.~Lanzotti, F.~Scala, and S.~Mazzoleni.
\newblock Phytotoxicity, not nitrogen immobilization, explains plant litter
  inhibitory effects: evidence from solid-state 13c {NMR} spectroscopy.
\newblock \emph{New Phytologist} \textbf{191}(4): pp. 1018--1030 (2011).
\newblock \doi{10.1111/j.1469-8137.2011.03765.x}.
\newline\urlprefix\url{https://doi.org/10.1111%2Fj.1469-8137.2011.03765.x}

\bibitem{Bonanomi.2014}
G.~Bonanomi, G.~Incerti, A.~Stinca, F.~Carten{\`{\i}}, F.~Giannino, and
  S.~Mazzoleni.
\newblock Ring formation in clonal plants.
\newblock \emph{Community Ecology} \textbf{15}(1): pp. 77--86 (2014).
\newblock \doi{10.1556/comec.15.2014.1.8}.
\newline\urlprefix\url{https://doi.org/10.1556%2Fcomec.15.2014.1.8}

\bibitem{byrnes2023large}
E.~Byrnes, P.~Carter, A.~Doelman, and L.~Liu.
\newblock Large amplitude radially symmetric spots and gaps in a dryland
  ecosystem model.
\newblock \emph{Journal of Nonlinear Science} \textbf{33}(6): p. 107 (2023).

\bibitem{Carteni.2012}
F.~Carten{\`{\i}}, A.~Marasco, G.~Bonanomi, S.~Mazzoleni, M.~Rietkerk, and
  F.~Giannino.
\newblock Negative plant soil feedback explaining ring formation in clonal
  plants.
\newblock \emph{Journal of Theoretical Biology} \textbf{313}: pp. 153--161
  (2012).
\newblock \doi{10.1016/j.jtbi.2012.08.008}.
\newline\urlprefix\url{https://doi.org/10.1016%2Fj.jtbi.2012.08.008}

\bibitem{Carter.2023}
P.~Carter.
\newblock A stabilizing effect of advection on planar interfaces in singularly
  perturbed reaction-diffusion equations.
\newblock \emph{arXiv:2310.13846}  (2023).
\newblock \doi{10.48550/arXiv.2310.13846}.
\newline\urlprefix\url{https://doi.org/10.48550/arXiv.2310.13846}

\bibitem{CarterDoelman.2018}
P.~Carter and A.~Doelman.
\newblock Traveling stripes in the {K}lausmeier model of vegetation pattern
  formation.
\newblock \emph{SIAM Journal on Applied Mathematics} \textbf{78}(6): pp.
  3213--3237 (2018).
\newblock \doi{10.1137/18M1196996}.
\newline\urlprefix\url{https://doi.org/10.1137/18M1196996}

\bibitem{Consolo_2017}
G.~Consolo, C.~Curr{\`{o}}, and G.~Valenti.
\newblock Pattern formation and modulation in a hyperbolic vegetation model for
  semiarid environments.
\newblock \emph{Applied Mathematical Modelling} \textbf{43}: pp. 372--392
  (2017).
\newblock \doi{10.1016/j.apm.2016.11.031}.
\newline\urlprefix\url{https://doi.org/10.1016%2Fj.apm.2016.11.031}

\bibitem{Consolo_2019}
G.~Consolo, C.~Curr{\`{o}}, and G.~Valenti.
\newblock Supercritical and subcritical {T}uring pattern formation in a
  hyperbolic vegetation model for flat arid environments.
\newblock \emph{Physica D: Nonlinear Phenomena} \textbf{398}: pp. 141--163
  (2019).
\newblock \doi{10.1016/j.physd.2019.03.006}.
\newline\urlprefix\url{https://doi.org/10.1016%2Fj.physd.2019.03.006}

\bibitem{Consolo_2022}
G.~Consolo, G.~Grifò, and G.~Valenti.
\newblock Dryland vegetation pattern dynamics driven by inertial effects and
  secondary seed dispersal.
\newblock \emph{Ecological Modelling} \textbf{474}: p. 110171 (2022).
\newblock ISSN 0304-3800.
\newblock \doi{10.1016/j.ecolmodel.2022.110171}.
\newline\urlprefix\url{http://dx.doi.org/10.1016/j.ecolmodel.2022.110171}

\bibitem{Dawes_2015}
J.~H.~P. Dawes and J.~L.~M. Williams.
\newblock Localised pattern formation in a model for dryland vegetation.
\newblock \emph{Journal of Mathematical Biology} \textbf{73}(1): pp. 63--90
  (2015).
\newblock \doi{10.1007/s00285-015-0937-5}.
\newline\urlprefix\url{https://doi.org/10.1007%2Fs00285-015-0937-5}

\bibitem{Deblauwe.2008}
V.~Deblauwe, N.~Barbier, P.~Couteron, O.~Lejeune, and J.~Bogaert.
\newblock The global biogeography of semi-arid periodic vegetation patterns.
\newblock \emph{Global Ecology and Biogeography} \textbf{17}(6): pp. 715--723
  (2008).
\newblock \doi{10.1111/j.1466-8238.2008.00413.x}.
\newline\urlprefix\url{https://doi.org/10.1111%2Fj.1466-8238.2008.00413.x}

\bibitem{Deblauwe.2012}
V.~Deblauwe, P.~Couteron, J.~Bogaert, and N.~Barbier.
\newblock Determinants and dynamics of banded vegetation pattern migration in
  arid climates.
\newblock \emph{Ecological Monographs} \textbf{82}(1): pp. 3--21 (2012).
\newblock \doi{10.1890/11-0362.1}.
\newline\urlprefix\url{https://doi.org/10.1890%2F11-0362.1}

\bibitem{Dekker.2010}
S.~Dekker, H.~de~Boer, V.~Brovkin, K.~Fraedrich, M.~Wassen, and M.~Rietkerk.
\newblock Biogeophysical feedbacks trigger shifts in the modelled
  vegetation-atmosphere system at multiple scales.
\newblock \emph{Biogeosciences} \textbf{7}(4): pp. 1237--1245 (2010).
\newblock \doi{10.5194/bg-7-1237-2010}.
\newline\urlprefix\url{https://doi.org/10.5194/bg-7-1237-2010}

\bibitem{Dekker.2009}
S.~Dekker, H.~de~Boer, M.~Rietkerk, and M.~Wassen.
\newblock Climate-vegetation feedbacks at different scales.
\newblock In \emph{EGU General Assembly Conference Abstracts}, volume~11, p.
  1196 (2009).

\bibitem{D_Odorico_2007}
P.~D{\textquotesingle}Odorico, K.~Caylor, G.~S. Okin, and T.~M. Scanlon.
\newblock On soil moisture-vegetation feedbacks and their possible effects on
  the dynamics of dryland ecosystems.
\newblock \emph{Journal of Geophysical Research: Biogeosciences}
  \textbf{112}(G4): pp. n/a--n/a (2007).
\newblock \doi{10.1029/2006jg000379}.
\newline\urlprefix\url{https://doi.org/10.1029%2F2006jg000379}

\bibitem{Doedel_1981}
E.~J. Doedel.
\newblock {AUTO: A} program for the automatic bifurcation analysis of
  autonomous systems.
\newblock \emph{Congr. Numer} \textbf{30}(265-284): pp. 25--93 (1981).

\bibitem{Doelman_2012}
A.~Doelman, , J.~D.~M. Rademacher, S.~van~der Stelt, and and.
\newblock {Hopf dances near the tips of Busse balloons}.
\newblock \emph{Discrete {\&} Continuous Dynamical Systems - S} \textbf{5}(1):
  pp. 61--92 (2012).
\newblock \doi{10.3934/dcdss.2012.5.61}.
\newline\urlprefix\url{https://doi.org/10.3934%2Fdcdss.2012.5.61}

\bibitem{Doelman_1997}
A.~Doelman, T.~J. Kaper, and P.~A. Zegeling.
\newblock {Pattern formation in the one-dimensional Gray - Scott model}.
\newblock \emph{Nonlinearity} \textbf{10}(2): pp. 523--563 (1997).
\newblock \doi{10.1088/0951-7715/10/2/013}.
\newline\urlprefix\url{https://doi.org/10.1088%2F0951-7715%2F10%2F2%2F013}

\bibitem{DoelmanVeerman.2015}
A.~Doelman and F.~Veerman.
\newblock An explicit theory for pulses in two component, singularly perturbed,
  reaction-diffusion equations.
\newblock \emph{Journal of Dynamics and Differential Equations} \textbf{27}(3):
  pp. 555--595 (2015).
\newblock \doi{10.1007/s10884-013-9325-2}.
\newline\urlprefix\url{https://doi.org/10.1007/s10884-013-9325-2}

\bibitem{Eigentler_2020}
L.~Eigentler.
\newblock Intraspecific competition in models for vegetation patterns:
  {Decrease} in resilience to aridity and facilitation of species coexistence.
\newblock \emph{Ecological Complexity} \textbf{42}: p. 100835 (2020).
\newblock ISSN 1476-945X.
\newblock \doi{10.1016/j.ecocom.2020.100835}.
\newline\urlprefix\url{http://dx.doi.org/10.1016/j.ecocom.2020.100835}

\bibitem{Gandhi_2018}
P.~Gandhi, L.~Werner, S.~Iams, K.~Gowda, and M.~Silber.
\newblock A topographic mechanism for arcing of dryland vegetation bands.
\newblock \emph{Journal of The Royal Society Interface} \textbf{15}(147): p.
  20180508 (2018).
\newblock ISSN 1742-5662.
\newblock \doi{10.1098/rsif.2018.0508}.
\newline\urlprefix\url{http://dx.doi.org/10.1098/rsif.2018.0508}

\bibitem{Gilad_2004}
E.~Gilad, J.~von Hardenberg, A.~Provenzale, M.~Shachak, and E.~Meron.
\newblock Ecosystem engineers: From pattern formation to habitat creation.
\newblock \emph{Physical Review Letters} \textbf{93}(9) (2004).
\newblock \doi{10.1103/physrevlett.93.098105}.
\newline\urlprefix\url{https://doi.org/10.1103%2Fphysrevlett.93.098105}

\bibitem{Gilad.2007}
E.~Gilad, J.~von Hardenberg, A.~Provenzale, M.~Shachak, and E.~Meron.
\newblock A mathematical model of plants as ecosystem engineers.
\newblock \emph{Journal of Theoretical Biology} \textbf{244}(4): pp. 680--691
  (2007).
\newblock \doi{10.1016/j.jtbi.2006.08.006}.
\newline\urlprefix\url{https://doi.org/10.1016%2Fj.jtbi.2006.08.006}

\bibitem{Gowda_2018}
K.~Gowda, S.~Iams, and M.~Silber.
\newblock Signatures of human impact on self-organized vegetation in the {H}orn
  of {A}frica.
\newblock \emph{Scientific Reports} \textbf{8}(1) (2018).
\newblock \doi{10.1038/s41598-018-22075-5}.
\newline\urlprefix\url{https://doi.org/10.1038%2Fs41598-018-22075-5}

\bibitem{Gowda_2014}
K.~Gowda, H.~Riecke, and M.~Silber.
\newblock Transitions between patterned states in vegetation models for
  semiarid ecosystems.
\newblock \emph{Physical Review E} \textbf{89}(2) (2014).
\newblock \doi{10.1103/physreve.89.022701}.
\newline\urlprefix\url{https://doi.org/10.1103%2Fphysreve.89.022701}

\bibitem{von_Hardenberg.2001}
J.~von Hardenberg, E.~Meron, M.~Shachak, and Y.~Zarmi.
\newblock Diversity of vegetation patterns and desertification.
\newblock \emph{Physical Review Letters} \textbf{87}(19): p. 198101 (2001).
\newblock \doi{10.1103/physrevlett.87.198101}.
\newline\urlprefix\url{https://doi.org/10.1103%2Fphysrevlett.87.198101}

\bibitem{HilleRisLambers.2001}
R.~Hille~Ris~Lambers, M.~Rietkerk, F.~van~den Bosch, H.~Prins, and H.~de~Kroon.
\newblock Vegetation pattern formation in semi-arid grazing systems.
\newblock \emph{Ecology} \textbf{82}(1): pp. 50--61 (2001).
\newblock \doi{10.2307/2680085}.
\newline\urlprefix\url{https://doi.org/10.2307%2F2680085}

\bibitem{Iuorio_2023}
A.~Iuorio, M.~Eppinga, M.~Baudena, F.~Veerman, M.~Rietkerk, and F.~Giannino.
\newblock Modelling how negative plant–soil feedbacks across life stages
  affect the spatial patterning of trees.
\newblock \emph{Scientific Reports} \textbf{13}(1) (2023).
\newblock ISSN 2045-2322.
\newblock \doi{10.1038/s41598-023-44867-0}.
\newline\urlprefix\url{http://dx.doi.org/10.1038/s41598-023-44867-0}

\bibitem{IuorioVeerman.2021}
A.~Iuorio and F.~Veerman.
\newblock The influence of autotoxicity on the dynamics of vegetation spots.
\newblock \emph{Physica D} \textbf{427}: p. 133015 (2021).
\newblock \doi{10.1016/j.physd.2021.133015}.
\newline\urlprefix\url{https://biorxiv.org/cgi/content/short/2020.07.29.226522v1}

\bibitem{JDC-BM.2020}
O.~Ja{\"i}bi, A.~Doelman, M.~Chirilus-Bruckner, and E.~Meron.
\newblock The existence of localized vegetation patterns in a systematically
  reduced model for dryland vegetation.
\newblock \emph{Physica D} \textbf{412}: p. 132637 (2020).
\newblock \doi{10.1016/j.physd.2020.132637}.
\newline\urlprefix\url{https://doi.org/10.1016/j.physd.2020.132637}

\bibitem{Kealy.2011}
B.~Kealy and D.~Wollkind.
\newblock A nonlinear stability analysis of vegetative {T}uring pattern
  formation for an interaction-diffusion plant-surface water model system in an
  arid flat environment.
\newblock \emph{Bulletin of Mathematical Biology} \textbf{74}(4): pp. 803--833
  (2011).
\newblock \doi{10.1007/s11538-011-9688-7}.
\newline\urlprefix\url{https://doi.org/10.1007%2Fs11538-011-9688-7}

\bibitem{Kefi_2007}
S.~K{\'{e}}fi, M.~Rietkerk, C.~L. Alados, Y.~Pueyo, V.~P. Papanastasis,
  A.~ElAich, and P.~C. de~Ruiter.
\newblock Spatial vegetation patterns and imminent desertification in
  {M}editerranean arid ecosystems.
\newblock \emph{Nature} \textbf{449}(7159): pp. 213--217 (2007).
\newblock \doi{10.1038/nature06111}.
\newline\urlprefix\url{https://doi.org/10.1038%2Fnature06111}

\bibitem{Klausmeier.1999}
C.~Klausmeier.
\newblock Regular and irregular patterns in semiarid vegetation.
\newblock \emph{Science} \textbf{284}(5421): pp. 1826--1828 (1999).
\newblock \doi{10.1126/science.284.5421.1826}.
\newline\urlprefix\url{https://doi.org/10.1126/science.284.5421.1826}

\bibitem{Kuehn_2022}
C.~Kuehn, N.~Berglund, C.~Bick, M.~Engel, T.~Hurth, A.~Iuorio, and C.~Soresina.
\newblock A general view on double limits in differential equations.
\newblock \emph{Physica D: Nonlinear Phenomena} \textbf{431}: p. 133105 (2022).
\newblock ISSN 0167-2789.
\newblock \doi{10.1016/j.physd.2021.133105}.
\newline\urlprefix\url{http://dx.doi.org/10.1016/j.physd.2021.133105}

\bibitem{Manfreda_2013}
S.~Manfreda, T.~Pizzolla, and K.~K. Caylor.
\newblock Modelling vegetation patterns in semiarid environments.
\newblock \emph{Procedia Environmental Sciences} \textbf{19}: pp. 168--177
  (2013).
\newblock \doi{10.1016/j.proenv.2013.06.019}.
\newline\urlprefix\url{https://doi.org/10.1016%2Fj.proenv.2013.06.019}

\bibitem{Marasco_2020}
A.~Marasco, F.~Giannino, and A.~Iuorio.
\newblock Modelling competitive interactions and plant{\textendash}soil
  feedback in vegetation dynamics.
\newblock \emph{Ricerche di Matematica} \textbf{69}(2): pp. 553--577 (2020).
\newblock \doi{10.1007/s11587-020-00497-6}.
\newline\urlprefix\url{https://doi.org/10.1007%2Fs11587-020-00497-6}

\bibitem{Marasco_etal.2014}
A.~Marasco, A.~Iuorio, F.~Carten{\`i}, G.~Bonanomi, D.~Tartakovsky,
  S.~Mazzoleni, and F.~Giannino.
\newblock Vegetation pattern formation due to interactions between water
  availability and toxicity in plant-soil feedback.
\newblock \emph{Bulletin of Mathematical Biology} \textbf{76}: pp. 2866--2883
  (2014).
\newblock \doi{10.1007/s11538-014-0036-6}.
\newline\urlprefix\url{https://doi.org/10.1007/s11538-014-0036-6}

\bibitem{Mazzoleni.2010}
S.~Mazzoleni, G.~Bonanomi, F.~Giannino, G.~Incerti, S.~Dekker, and M.~Rietkerk.
\newblock Modelling the effects of litter decomposition on tree diversity
  patterns.
\newblock \emph{Ecological Modelling} \textbf{221}(23): pp. 2784--2792 (2010).
\newblock \doi{10.1016/j.ecolmodel.2010.08.007}.
\newline\urlprefix\url{https://doi.org/10.1016%2Fj.ecolmodel.2010.08.007}

\bibitem{Mazzoleni.2007}
S.~Mazzoleni, G.~Bonanomi, F.~Giannino, M.~Rietkerk, S.~Dekker, and F.~Zucconi.
\newblock Is plant biodiversity driven by decomposition processes? {A}n
  emerging new theory on plant diversity.
\newblock \emph{Community Ecology} \textbf{8}(1): pp. 103--109 (2007).
\newblock \doi{10.1556/comec.8.2007.1.12}.
\newline\urlprefix\url{https://doi.org/10.1556%2Fcomec.8.2007.1.12}

\bibitem{Mazzoleni_etal.2014}
S.~Mazzoleni, G.~Bonanomi, G.~Incerti, M.~L. Chiusano, P.~Termolino, A.~Mingo,
  M.~Senatore, F.~Giannino, F.~Carten{\`i}, M.~Rietkerk, and V.~Lanzotti.
\newblock Inhibitory and toxic effects of extracellular self-{DNA} in litter: a
  mechanism for negative plant-soil feedbacks?
\newblock \emph{New Phytologist} \textbf{205}(3): pp. 1195--1210 (2015).
\newblock \doi{10.1111/nph.13121}.
\newline\urlprefix\url{https://doi.org/10.1111/nph.13121}

\bibitem{Meron_2015}
E.~Meron.
\newblock \emph{Nonlinear Physics of Ecosystems}.
\newblock {CRC} Press (2015).
\newblock \doi{10.1201/b18360}.
\newline\urlprefix\url{https://doi.org/10.1201%2Fb18360}

\bibitem{meron2019vegetation}
E.~Meron.
\newblock Vegetation pattern formation: The mechanisms behind the forms.
\newblock \emph{Physics Today} \textbf{72}(11): pp. 30--36 (2019).

\bibitem{Noy_Meir_1975}
I.~Noy-Meir.
\newblock Stability of grazing systems: An application of predator-prey graphs.
\newblock \emph{The Journal of Ecology} \textbf{63}(2): p. 459 (1975).
\newblock \doi{10.2307/2258730}.
\newline\urlprefix\url{https://doi.org/10.2307%2F2258730}

\bibitem{promislow2002renormalization}
K.~Promislow.
\newblock A renormalization method for modulational stability of quasi-steady
  patterns in dispersive systems.
\newblock \emph{SIAM journal on mathematical analysis} \textbf{33}(6): pp.
  1455--1482 (2002).

\bibitem{Rietkerk.2004}
M.~Rietkerk.
\newblock Self-organized patchiness and catastrophic shifts in ecosystems.
\newblock \emph{Science} \textbf{305}(5692): pp. 1926--1929 (2004).
\newblock \doi{10.1126/science.1101867}.
\newline\urlprefix\url{https://doi.org/10.1126%2Fscience.1101867}

\bibitem{Rietkerk_2021}
M.~Rietkerk, R.~Bastiaansen, S.~Banerjee, J.~van~de Koppel, M.~Baudena, and
  A.~Doelman.
\newblock Evasion of tipping in complex systems through spatial pattern
  formation.
\newblock \emph{Science} \textbf{374}(6564) (2021).
\newblock ISSN 1095-9203.
\newblock \doi{10.1126/science.abj0359}.
\newline\urlprefix\url{http://dx.doi.org/10.1126/science.abj0359}

\bibitem{Rietkerk.2002}
M.~Rietkerk, M.~Boerlijst, F.~van Langevelde, R.~Hille~Ris~Lambers, J.~van~de
  Koppel, L.~Kumar, H.~Prins, and A.~de~Roos.
\newblock Self-organization of vegetation in arid ecosystems.
\newblock \emph{The American Naturalist} \textbf{160}(4): p. 524 (2002).
\newblock \doi{10.2307/3079239}.
\newline\urlprefix\url{https://doi.org/10.2307%2F3079239}

\bibitem{Rietkerk_2008}
M.~Rietkerk and J.~van~de Koppel.
\newblock Regular pattern formation in real ecosystems.
\newblock \emph{Trends in Ecology $\&$ Evolution} \textbf{23}(3): p. 169–175
  (2008).
\newblock ISSN 0169-5347.
\newblock \doi{10.1016/j.tree.2007.10.013}.
\newline\urlprefix\url{http://dx.doi.org/10.1016/j.tree.2007.10.013}

\bibitem{Salvatori_2023}
N.~Salvatori, M.~Moreno, M.~Zotti, A.~Iuorio, F.~Cartenì, G.~Bonanomi,
  S.~Mazzoleni, and F.~Giannino.
\newblock Process based modelling of plants-fungus interactions explains fairy
  ring types and dynamics.
\newblock \emph{Scientific Reports} \textbf{13}(1) (2023).
\newblock ISSN 2045-2322.
\newblock \doi{10.1038/s41598-023-46006-1}.
\newline\urlprefix\url{http://dx.doi.org/10.1038/s41598-023-46006-1}

\bibitem{Schecter.2008}
S.~Schecter.
\newblock Exchange lemmas 2: General exchange lemma.
\newblock \emph{Journal of Differential Equations} \textbf{245}(2): pp.
  411--441 (2008).
\newblock \doi{10.1016/j.jde.2007.10.021}.
\newline\urlprefix\url{https://doi.org/10.1016/j.jde.2007.10.021}

\bibitem{SewaltDoelman.2017}
L.~Sewalt and A.~Doelman.
\newblock Spatially periodic multipulse patterns in a generalized
  {K}lausmeier--{G}ray--{S}cott model.
\newblock \emph{SIAM Journal on Applied Dynamical Systems} \textbf{16}(2): pp.
  1113--1163 (2017).
\newblock \doi{10.1137/16M1078756}.
\newline\urlprefix\url{https://doi.org/10.1137/16M1078756}

\bibitem{Siteur_2014}
K.~Siteur, E.~Siero, M.~B. Eppinga, J.~D. Rademacher, A.~Doelman, and
  M.~Rietkerk.
\newblock {Beyond Turing: The response of patterned ecosystems to environmental
  change}.
\newblock \emph{Ecological Complexity} \textbf{20}: pp. 81--96 (2014).
\newblock \doi{10.1016/j.ecocom.2014.09.002}.
\newline\urlprefix\url{https://doi.org/10.1016%2Fj.ecocom.2014.09.002}

\bibitem{valentin1999soil}
C.~Valentin, J.-M. d'Herb{\`e}s, and J.~Poesen.
\newblock Soil and water components of banded vegetation patterns.
\newblock \emph{Catena} \textbf{37}(1-2): pp. 1--24 (1999).
\newblock \doi{10.1016/S0341-8162(99)00053-3}.
\newline\urlprefix\url{https://doi.org/10.1016/S0341-8162(99)00053-3}

\bibitem{van2011planar}
P.~Van~Heijster and B.~Sandstede.
\newblock Planar radial spots in a three-component {F}itz{H}ugh--{N}agumo
  system.
\newblock \emph{Journal of Nonlinear Science} \textbf{21}: pp. 705--745 (2011).
\newblock \doi{10.1007/s00332-011-9098-x}.
\newline\urlprefix\url{https://doi.org/10.1007/s00332-011-9098-x}

\bibitem{Vincenot_2017}
C.~Vincenot, F.~Carten{\`{\i}}, G.~Bonanomi, S.~Mazzoleni, and F.~Giannino.
\newblock Plant-soil negative feedback explains vegetation dynamics and
  patterns at multiple scales.
\newblock \emph{Oikos} \textbf{126}(9): pp. 1319--1328 (2017).
\newblock \doi{10.1111/oik.04149}.
\newline\urlprefix\url{https://doi.org/10.1111%2Foik.04149}

\end{thebibliography}
\bibliographystyle{test4}

\end{document}